\documentclass[11pt,reqno]{amsart}

\usepackage[T1]{fontenc}
\usepackage{lmodern}
\usepackage{microtype}
\usepackage{amsmath,amssymb,amsthm,mathtools}
\usepackage{mathrsfs}
\usepackage{booktabs,tabularx,array}
\usepackage{enumitem}
\usepackage{xcolor}
\usepackage[hidelinks]{hyperref}
\usepackage{hyperref}

\newtheorem{theorem}{Theorem}[section]
\newtheorem{proposition}[theorem]{Proposition}
\newtheorem{lemma}[theorem]{Lemma}
\newtheorem{corollary}[theorem]{Corollary}

\theoremstyle{definition}
\newtheorem{definition}[theorem]{Definition}

\theoremstyle{remark}
\newtheorem{remark}[theorem]{Remark}

\newcommand{\Ric}{\operatorname{Ric}}
\newcommand{\Rm}{\operatorname{Rm}}
\newcommand{\Scal}{\operatorname{Scal}}
\newcommand{\Ein}{\operatorname{Ein}}

\newcommand{\tr}{\operatorname{tr}}
\newcommand{\diver}{\operatorname{div}}
\newcommand{\Hess}{\operatorname{Hess}}

\newcommand{\dist}{\operatorname{dist}}

\newcommand{\dd}{\,\mathrm d}
\newcommand{\R}{\mathbb R}
\newcommand{\T}{\mathbb T}
\newcommand{\Sph}{\mathbb S}
\newcommand{\cP}{\mathcal P}
\newcommand{\cN}{\mathcal N}
\newcommand{\cZ}{\mathcal Z}
\newcommand{\dsec}{\delta_{\mathrm{sec}}}
\newcommand{\dric}{\delta_{\mathrm{Ric}}}
\newcommand{\abs}[1]{\lvert #1\rvert}
\newcommand{\norm}[1]{\lVert #1\rVert}
\newcommand{\inner}[2]{\langle #1,#2\rangle}
\newcommand{\eps}{\varepsilon}

\title[Curvature sign rigidity and sharp pinching thresholds]
{Curvature Sign Rigidity and Sharp Pointwise Pinching Thresholds}
\author{Minbo Gao$^1$}
\author{Yuhang Liu$^{*2}$}
\author{Genyuan Zhang$^3$}
\email{yuhang.liu02@xjtlu.edu.cn}
\date{2026-9-4}

\subjclass[2020]{53C20, 53C21, 53C24, 35J15, 58J05}
\keywords{sectional curvature, Ricci curvature, pointwise pinching, Einstein tensor, divergence-free tensor, sign rigidity, warped product, flat set}

\hypersetup{
  pdftitle={Curvature Sign Rigidity and Sharp Pointwise Pinching Thresholds},
  pdfauthor={Minbo Gao, Yuhang Liu, Genyuan Zhang},
  pdfsubject={Sectional and Ricci curvature sign rigidity under pointwise pinching},
  pdfkeywords={sectional curvature, Ricci curvature, pointwise pinching, Einstein tensor, divergence-free tensor, sign rigidity}}
  
\begin{document}
\footnotetext[1]{Institute of Software, Chinese Academy of Sciences}
\footnotetext[2]{Department of Applied Mathematics, Xi'an Jiaotong-Liverpool University}
\footnotetext[3]{Department of Applied Mathematics, Xi'an Jiaotong-Liverpool University}

\maketitle

\begin{abstract}
We study connected Riemannian manifolds on which either the sectional curvature or the Ricci tensor is, at each point, strictly positive, strictly negative, or zero, and ask whether the two signs can coexist under pointwise pinching.  A general support-rigidity theorem for positive semidefinite divergence-free symmetric tensors is the common analytic mechanism.

For sectional curvature in dimension $n\ge 3$, any locally uniform positive lower bound for the absolute pointwise pinching ratio rules out a change of sign.  No completeness, compactness, curvature bound, or regularity of the flat interface is required.  With a fixed pinching constant $\delta_0$, the flat set has no $C^1$ hypersurface piece; it is empty when $\delta_0>1/2$, and is locally porous when $\delta_0=1/2$.  These conclusions are sharp in several senses: there are smooth conformally flat local metrics whose sectional curvature changes sign across a flat hypersurface when the pinching degenerates, and there are closed exactly $1/q$-pinched metrics on spheres with isolated flat points.

For Ricci curvature, the sharp threshold is
\[
\delta_c=\frac1{n-1}.
\]
A locally uniform gap $\dric>\delta_c$ forces one Ricci sign globally.  Conversely, for every $0<\delta<\delta_c$ there are local sign-changing metrics with exact pointwise pinching $\dric\equiv\delta$, and exact local examples also exist at $\delta=\delta_c$.  Pointwise strictness $\dric>\delta_c$ is insufficient if the gap collapses at a Ricci-flat interface.  For every subcritical $\delta$ we give explicit closed metrics on $\Sph^1\times\Sph^{n-1}$, and complete periodic lifts to $\R\times\Sph^{n-1}$, whose optimal global lower pinching constant is exactly $\delta$.  Their pinching is not pointwise constant.  Finally, we prove two ansatz-specific obstructions to compactifying the exact local constructions. The main content of the proof is generated by ChatGPT 5.6 sol and verified by the authors.
\end{abstract}

\section{Introduction and main results}

Pointwise pinching usually enters Riemannian geometry after a curvature sign has already been fixed.  The differentiable quarter-pinching theorem and its weak endpoint classification are prominent examples: compact manifolds with positive or nonnegative pointwise quarter-pinched sectional curvature are strongly constrained by the work of Brendle and Schoen \cite{BrendleSchoenActa,BrendleSchoenJAMS}; related flag-pinching results were obtained by Ni and Wilking \cite{NiWilking}.  Here the question is logically prior to classification: if every point is strictly positive, strictly negative, or flat, can the sign itself vary on a connected manifold?

The answer is radically different for sectional and Ricci curvature.  For sectional curvature, every locally uniform positive pinching bound forces sign rigidity.  For Ricci curvature, sign change is possible up to the sharp threshold $1/(n-1)$ and impossible above it, provided that the supercritical gap is locally uniform.  Both statements are controlled by the Einstein tensor, but the relevant algebra is different.

Throughout we use the curvature convention for which the unit round sphere has positive sectional curvature.  The Einstein tensor is
\begin{equation}\label{eq:Ein-def}
\Ein=\Ric-\frac12\Scal\,g,
\end{equation}
and satisfies the contracted second Bianchi identity
\begin{equation}\label{eq:Bianchi}
\diver\Ein=0;
\end{equation}
see, for example, \cite[Chapters 7 and 8]{Lee} or \cite[Chapter 1]{Besse}.

\subsection{The two sign trichotomies}

\begin{definition}[Sectional sign trichotomy]
A Riemannian manifold $(M^n,g)$ satisfies the sectional sign trichotomy if at every point $p$ exactly one of the following holds:
\begin{enumerate}[label=(\roman*)]
\item $K_p(\sigma)>0$ for every $2$-plane $\sigma\subset T_pM$;
\item $K_p(\sigma)<0$ for every $2$-plane $\sigma\subset T_pM$;
\item $\Rm_p=0$.
\end{enumerate}
At a nonflat point define
\[
\dsec(p)=\frac{\min_{\sigma\subset T_pM}\abs{K_p(\sigma)}}
{\max_{\sigma\subset T_pM}\abs{K_p(\sigma)}}\in(0,1].
\]
We say that the sectional pinching is \emph{locally uniform} if for every compact $K\subset M$ there is $\delta_K>0$ such that $\dsec\ge\delta_K$ on the nonflat part of $K$.
\end{definition}

\begin{definition}[Ricci sign trichotomy]
A Riemannian manifold satisfies the Ricci sign trichotomy if at every point $p$ the Ricci tensor is positive definite, negative definite, or zero.  At a non-Ricci-flat point let $\lambda_1(p),\dots,\lambda_n(p)$ be the Ricci eigenvalues and define
\[
\dric(p)=\frac{\min_i\abs{\lambda_i(p)}}{\max_i\abs{\lambda_i(p)}}\in(0,1].
\]
For $n\ge3$ set
\begin{equation}\label{eq:critical}
\delta_c=\frac1{n-1}.
\end{equation}
We say that the Ricci pinching is \emph{locally uniformly supercritical} if for every compact $K\subset M$ there is $\eps_K>0$ such that
\[
\dric\ge \delta_c+\eps_K
\]
on every non-Ricci-flat point of $K$.
\end{definition}

The distinction between exact pinching and a lower pinching bound is essential.  Exact pointwise pinching means $\dric\equiv\delta$ on the non-Ricci-flat set.  A uniform lower bound means only $\dric\ge\delta$.

\subsection{A common tensor theorem}

The analytic core is a support theorem for positive semidefinite divergence-free tensors.  Its Euclidean virial identity is closely related to the mean-stress identities used in continuum mechanics and to the first step of stationary-varifold monotonicity \cite{Allard}; positive divergence-free tensors have been studied systematically by Serre \cite{Serre2018,Serre2022}.

\begin{theorem}[Support rigidity]\label{thm:support-intro}
Let $(M,g)$ be connected and let $T\in L^1_{\mathrm{loc}}(\operatorname{Sym}^2T^*M)$ satisfy
\[
T\ge0,
\qquad
\int_M\inner{T}{\Hess\phi}\dd V_g=0
\quad\text{for every }\phi\in C_c^\infty(M).
\]
Assume that
\begin{equation}\label{eq:trace-ell-intro}
T\ge \frac1\kappa(\tr_gT)g
\end{equation}
almost everywhere, where $\kappa:M\to[n,\infty)$ is bounded on compact sets.  If $T=0$ almost everywhere on one nonempty open set, then $T=0$ almost everywhere on all of $M$.
\end{theorem}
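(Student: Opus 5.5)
The plan is a continuation argument driven by a radial virial identity. Let $U$ be the union of all open sets on which $T=0$ almost everywhere. It is open, it is nonempty by hypothesis, and $T=0$ a.e. on $U$ by a countable-cover argument. If $U\neq M$, connectedness gives a point $p\in\partial U$. I will derive a contradiction by showing that $T$ vanishes a.e. on a neighbourhood of a point of $M\setminus U$ near $p$.

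\textbf{Local setup near $p$.} Fix a compact neighbourhood $K$ of $p$. On $K$ I use three bounds:
\begin{itemize}
\item $\kappa\le\kappa_K$;
\item the sectional curvature is bounded in absolute value by $\Lambda$;
\item the injectivity radius is at least some $i_0>0$.
\end{itemize}
Choose $q\in U$ with $d(q,p)<\eps$, where $\eps$ is small. Set $r_0=\dist(q,M\setminus U)\le d(q,p)<\eps$. Then $T=0$ a.e. on $B(q,r_0)$, and the sphere $\partial B(q,r_0)$ contains a point of $M\setminus U$. Put $\rho=\tfrac12 d(q,\cdot)^2$, which is smooth on $B(q,i_0)$. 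The Hessian comparison theorem gives
\[
\Hess\rho\le(1+C\rho)g \quad\text{and}\quad \abs{\nabla\rho}^2=2\rho
\]
on small balls, with $C=C(\Lambda)$.

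\textbf{Test function and sign of the integrand.} Take $\phi=\psi(\rho)$, where $\psi(t)=(s-t)_+^m$, $s=(1+\eta)\rho_0$ and $\rho_0=r_0^2/2$. If smoothness is an issue, replace $\psi$ by $e^{-1/(s-t)}$ for $t<s$, or mollify. Then
\[
\inner{T}{\Hess\phi}=\psi'(\rho)\inner{T}{\Hess\rho}+\psi''(\rho)\,T(\nabla\rho,\nabla\rho).
\]
Since $T\ge0$ and $\psi'\le0$, the first term is at least $\psi'(\rho)(1+C\rho)\tr_gT$. By the hypothesis $T\ge\kappa^{-1}(\tr_gT)g$, the second term is at least $\psi''(\rho)\,2\rho\,\tr_gT/\kappa_K$. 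On $\operatorname{supp}\phi\cap\operatorname{supp}T$ we have $\rho_0\le\rho<s$. There
\[
\inner{T}{\Hess\phi}\ge m(s-\rho)^{m-2}\tr_gT\Bigl[\tfrac{2(m-1)\rho_0}{\kappa_K}-\eta\rho_0(1+Cs)\Bigr].
\]
For fixed $\eta\in(0,1)$ and $m$ large enough, depending only on $\kappa_K$ and $C$, the bracket is positive. So the integrand is nonnegative a.e.

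Because the integral vanishes, $\tr_gT=0$ a.e. on $\{\rho<s\}$. Together with $T\ge0$ this gives $T=0$ a.e. on $B(q,\sqrt{1+\eta}\,r_0)$. This ball is open and contains a point of $M\setminus U$ in its interior, which contradicts the maximality of $U$. Hence $U=M$.

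\textbf{Main obstacle.} The delicate step is the sign estimate. The negative first-order term must be dominated by the second-order term, which only sees $T(\nabla\rho,\nabla\rho)$; this is exactly where the ellipticity hypothesis \eqref{eq:trace-ell-intro} enters. The domination holds only on a thin shell $\rho_0\le\rho<s$. This forces two choices: start from a maximal vanishing ball, so that $T$ has no mass at small $\rho$; and take $\psi$ very steep. I must also check two technical points. First, all constants stay uniform as $q\to p$; this works because $r_0\to0$ and $\kappa$ and the curvature are bounded on $K$. Second, the $L^1_{\mathrm{loc}}$ weak formulation allows the merely $C^{m-2}$ test function; this follows by approximation, since $\Hess\phi$ is bounded and compactly supported.
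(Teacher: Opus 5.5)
Your proof is correct, and it works by a different mechanism from the paper's.

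\textbf{The paper's route.} The paper tests the equation against a whole family of radial functions $\Psi\circ\rho$. Via the coarea formula it turns this into the distributional identity $(R\sigma(R))'=\tau(R)+E(R)$ for spherical integrals. Integrating gives monotonicity of $R^{-\bar\kappa}e^{-\bar\kappa C_0R^2/2}\int_{B_R}\tr T$. Vanishing on a small ball then spreads to the whole normal ball in one step, and a chain of normal balls along a path finishes the argument.

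\textbf{Your route.} You use a single steep radial barrier on a thin shell just outside a maximal dead ball. On the support of $T$ you have $\abs{\nabla\rho}^2=2\rho\ge2\rho_0$. So the radial second-order term $\psi''\,T(\nabla\rho,\nabla\rho)$, bounded below by the ellipticity hypothesis, dominates the first-order term. This gives $\inner{T}{\Hess\phi}\ge c(s-\rho)^{m-2}\tr T$ a.e. with $c>0$, and the vanishing integral kills $T$ on the shell. This is the classical Hopf-barrier (adjoint maximum principle) argument. It needs no coarea formula, no absolute continuity of $m$, and no analysis of the limit of $R\sigma(R)$ at $0$. It uses only the one-sided bound $\Hess\rho\le(1+C\rho)g$. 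Your maximal-open-set continuation replaces the path covering.

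\textbf{What the paper's route gives that yours does not.} Yours yields only qualitative propagation. The monotone quantity also gives the growth bound $\int_{B_R(y)}\tr T\ge cR^{\kappa}$ of Proposition~\ref{prop:nondegrowth}. That bound drives the emptiness and porosity statements for the flat set in Theorem~\ref{thm:flat-intro}. It also underlies the optimal exponent in Remark~\ref{rem:sharp-exponent} and the radially integrable refinement in Corollary~\ref{cor:radial-conditioning}.

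\textbf{Two simplifications.}
\begin{enumerate}
\item You do not need uniformity of constants as $q\to p$. One $q$ with $B(q,\sqrt{2}\,r_0)$ inside the controlled region already gives the contradiction.
\item You do not need the infimum defining $r_0$ to be attained on $\partial B(q,r_0)$. The ball $B(q,\sqrt{1+\eta}\,r_0)$ contains points of $M\setminus U$ in any case.
\end{enumerate}
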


The Hessian-test identity is implied by distributional divergence-freeness.  A positive spectral truncation then gives a general sign theorem.

\begin{theorem}[Sign rigidity for conditioned divergence-free tensors]\label{thm:tensor-intro}
Let $(M,g)$ be connected and let $E\in C^1(\operatorname{Sym}^2T^*M)$ satisfy $\diver E=0$.  Suppose that at every point $E$ is positive definite, negative definite, or zero.  Assume that on every compact set the absolute condition number of $E$ is bounded at its nonzero points.  Then $E$ cannot assume both definite signs.
\end{theorem}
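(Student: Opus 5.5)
We argue by contradiction, using Theorem~\ref{thm:support-intro} applied to a positive spectral truncation of $E$. Let $P=\{E>0\}$ and $N=\{E<0\}$; both are open because $E$ is continuous and definiteness is an open condition. Suppose both are nonempty. Set $T=E^+$, the positive part of $E$ in the spectral sense with respect to $g$. By the trichotomy, $T=E$ on $P$ and $T=0$ on $M\setminus P$, which contains the nonempty open set $N$. Then $T$ is continuous, hence in $L^1_{\mathrm{loc}}$, and $T\ge0$. The goal is to check the two remaining hypotheses of Theorem~\ref{thm:support-intro} for $T$. The theorem then forces $T=0$ almost everywhere, hence everywhere by continuity, contradicting $P\neq\emptyset$.

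\emph{Trace ellipticity.} On a compact $K$, let $c_K$ bound the condition number $\lambda_{\max}/\lambda_{\min}$ of $E$ at nonzero points. At $p\in P\cap K$ we have $\tr_g T\le n\lambda_{\max}\le n c_K\lambda_{\min}$, so $T\ge \frac{1}{nc_K}(\tr_gT)g$. Where $T=0$ the inequality is trivial. Choose a continuous (or locally bounded) $\kappa\ge n$ with $\kappa\ge nc_K$ on each $K$, for example by an exhaustion. Then \eqref{eq:trace-ell-intro} holds with $\kappa$ bounded on compacts.

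\emph{The Hessian identity.} This is the main obstacle, since $T$ is only continuous and fails to be $C^1$ across $\partial P$. I would use the trichotomy to show that $T$ is in fact weakly divergence-free. The key observation is that $\partial P\subset Z:=\{E=0\}$: a boundary point of $P$ is not in the open sets $P$ or $N$, so $E$ vanishes there. For $\phi\in C_c^\infty(M)$, fix a cutoff $\chi_\varepsilon=\psi(\cdot/\varepsilon)$ composed with $|E|$, for instance $\chi_\varepsilon=\eta(|E|/\varepsilon)$, where $\eta$ vanishes near $0$ and equals $1$ on $[1,\infty)$. Then $\chi_\varepsilon\mathbf 1_P$ is $C^1$ with compact support in $P$ on $\operatorname{supp}\phi$. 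Since $E$ is $C^1$ and divergence-free, integration by parts gives
\[
\int \inner{E}{\Hess\phi}\chi_\varepsilon\mathbf 1_P
=-\int_P \chi_\varepsilon\,(\diver E)(\nabla\phi)-\int_P E(\nabla\phi,\nabla\chi_\varepsilon),
\]
and the first term vanishes. For the error term, $|\nabla\chi_\varepsilon|\le C\varepsilon^{-1}|\nabla E|$ and is supported where $|E|\le\varepsilon$, so the integrand is bounded by $C|\nabla E||\nabla\phi|\mathbf 1_{\{0<|E|\le\varepsilon\}\cap P}$. This tends to $0$ by dominated convergence, since $\bigcap_\varepsilon\{0<|E|\le\varepsilon\}=\emptyset$. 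The left side tends to $\int_M\inner{T}{\Hess\phi}$, because $\chi_\varepsilon\mathbf 1_P\to\mathbf 1_P$ pointwise. Hence $\int\inner{T}{\Hess\phi}\,dV=0$.

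The delicate point is exactly this boundary term. The argument succeeds because $E$ itself vanishes on the interface, so the cutoff along level sets of $|E|$ costs only $O(|\nabla E|)$ on a shrinking set. Regularity of $\partial P$ is never needed.

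With both hypotheses verified, Theorem~\ref{thm:support-intro} applies and yields the contradiction. The same argument with $-E$ is symmetric, so $E$ cannot take both definite signs.
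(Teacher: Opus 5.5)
Your proof is correct. Its architecture matches the paper's: positive spectral truncation, trace ellipticity from the local condition-number bound, then Theorem~\ref{thm:support-intro} applied with the open dead core $N$.

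The genuine difference is in how you verify the Hessian identity for $E^+$.

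\textbf{The paper's route.} It proves the stronger Lemma~\ref{lem:positive-part}. First, $E^+$ is locally Lipschitz, because projection onto the positive semidefinite cone is nonexpansive. Second, the Stampacchia property gives $\nabla E=0$ a.e.\ on $\{E=0\}$. Together these yield $\diver E^+=\mathbf 1_{\{E>0\}}\diver E=0$ almost everywhere, hence weakly.

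\textbf{Your route.} You cut off along level sets of $\abs{E}$ itself. Since $\partial P\subset\{E=0\}$, the function $\eta(\abs{E}/\eps)\mathbf 1_P$ is $C^1$. The factor $\abs{E}\le\eps$ on the transition layer exactly cancels the $\eps^{-1}$ in its gradient, so dominated convergence removes the error term. This is the same mechanism as the paper's Ricci truncation, Lemma~\ref{lem:ricci-truncation}, with $\abs{E}$ playing the role of $\Scal$. There, $\norm{\Ein}_{\mathrm{op}}\le\Scal/2$ supplies the cancellation.

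\textbf{What each buys.} Your argument is more elementary: it needs no Rademacher, Stampacchia, or convex-projection regularity. It also shows, in general, that $E\mathbf 1_U$ is weakly divergence-free whenever $U$ is open and $E=0$ on $\partial U$. The paper's route additionally gives $E^+\in W^{1,\infty}_{\mathrm{loc}}$ and an almost-everywhere pointwise divergence identity, but Theorem~\ref{thm:tensor-intro} does not need either. Your trace-ellipticity constant $nc_K$ is slightly worse than the paper's $1+(n-1)C$, which is immaterial.
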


The local boundedness in this statement is stronger than a merely pointwise finite condition number and weaker than one global bound.  The proof is given in Section~\ref{sec:tensors}.

\subsection{Sectional-curvature results}

Let $\cP_{\mathrm{sec}}$, $\cN_{\mathrm{sec}}$, and $\cZ_{\mathrm{sec}}$ denote the positive, negative, and flat sets for sectional curvature.

\begin{theorem}[Sectional sign rigidity]\label{thm:sectional-intro}
Let $(M^n,g)$ be connected, $n\ge3$, and satisfy the sectional sign trichotomy.  If $\dsec$ is locally uniformly bounded away from zero, then exactly one of the following holds:
\begin{enumerate}[label=(\alph*)]
\item $\Rm\equiv0$;
\item $\cN_{\mathrm{sec}}=\varnothing$, $\cP_{\mathrm{sec}}\ne\varnothing$, and $\cZ_{\mathrm{sec}}$ has empty interior;
\item $\cP_{\mathrm{sec}}=\varnothing$, $\cN_{\mathrm{sec}}\ne\varnothing$, and $\cZ_{\mathrm{sec}}$ has empty interior.
\end{enumerate}
No completeness, compactness, curvature bound, or regularity of $\partial\cZ_{\mathrm{sec}}$ is assumed.  More generally, it is enough to impose locally uniform pinching on $\cP_{\mathrm{sec}}$ alone: then either $\cP_{\mathrm{sec}}=\varnothing$, or $\cN_{\mathrm{sec}}=\varnothing$ and $\cZ_{\mathrm{sec}}$ has empty interior.  The symmetric one-sided statement holds with $\cP_{\mathrm{sec}}$ and $\cN_{\mathrm{sec}}$ interchanged.
\end{theorem}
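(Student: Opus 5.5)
The plan is to reduce everything to the tensor results, Theorem~\ref{thm:support-intro} and Theorem~\ref{thm:tensor-intro}, applied to $E=-\Ein$. Fix $p$ and an orthonormal frame $e_1,\dots,e_n$, and write $K_{ij}$ for the sectional curvature of $\operatorname{span}\{e_i,e_j\}$. A direct computation gives
\[
-\Ein(e_1,e_1)=\tfrac12\Scal-\Ric(e_1,e_1)=\sum_{2\le i<j\le n}K_{ij}.
\]
So $-\Ein(v,v)$ is the sum of the sectional curvatures of $\binom{n-1}{2}$ mutually orthogonal planes in $v^\perp$. Since $n\ge3$, this sum is nonempty.

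Hence, at a point of $\cP_{\mathrm{sec}}$ with $k_{\min}\le K\le k_{\max}$, every eigenvalue of $-\Ein$ lies in
\[
\Bigl[\tbinom{n-1}{2}k_{\min},\ \tbinom{n-1}{2}k_{\max}\Bigr].
\]
Thus $-\Ein$ is positive definite there, with condition number at most $1/\dsec(p)$. Symmetrically, $-\Ein$ is negative definite on $\cN_{\mathrm{sec}}$, and it vanishes on $\cZ_{\mathrm{sec}}$. In trace form this reads $-\Ein\ge\frac{1}{\kappa}\tr(-\Ein)\,g$ with $\kappa=n/\delta_K$ on a compact set $K$. The tensor $\Ein$ is smooth and satisfies $\diver\Ein=0$ by \eqref{eq:Bianchi}.

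Under two-sided locally uniform pinching, Theorem~\ref{thm:tensor-intro} applied to $E=-\Ein$ immediately shows that $\cP_{\mathrm{sec}}$ and $\cN_{\mathrm{sec}}$ cannot both be nonempty. If both are empty, we are in case (a).

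For the one-sided statement and the empty-interior claims, I would work with the truncation $T=(-\Ein)_+$. This equals $-\Ein$ on the open set $\cP_{\mathrm{sec}}$ and $0$ elsewhere. By the first paragraph, $T\ge0$ and it satisfies \eqref{eq:trace-ell-intro} with $\kappa=n/\delta_K$, using only the pinching on $\cP_{\mathrm{sec}}$.

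The key step, and the expected main obstacle, is verifying the Hessian-test identity $\int\inner{T}{\Hess\phi}=0$. Regularity of $\partial\cP_{\mathrm{sec}}$ is not assumed, so I cannot simply integrate by parts on $\cP_{\mathrm{sec}}$.

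My first attempt is to use the cutoffs $\chi_\eps=\eta(\tr(-\Ein)/\eps)$, with $\eta$ vanishing near $0$ and equal to $1$ on $[1,\infty)$, restricted to the $\cP$-side. Note that $\tr(-\Ein)>0$ on $\cP_{\mathrm{sec}}$ and $\tr(-\Ein)<0$ on $\cN_{\mathrm{sec}}$, so $\eta(s)=0$ for $s\le0$ isolates $\cP_{\mathrm{sec}}$. Integrating by parts the smooth tensor $\chi_\eps(-\Ein)$ against $\Hess\phi$ and using $\diver\Ein=0$ leaves the error term
\[
\int \eps^{-1}\eta'\,(-\Ein)(\nabla\tr\Ein,\nabla\phi).
\]
This term is supported where $0<\tr(-\Ein)<\eps$. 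There $\abs{\Ein}\lesssim\eps$ by the pinching, since the trace controls the full tensor when it is definite. Therefore the integrand is $O(\abs{\nabla\Ein}\,\abs{\nabla\phi})$ on the thin set $\{0<\tr(-\Ein)<\eps\}\cap\operatorname{supp}\phi$, whose measure tends to $0$. Letting $\eps\to0$ gives the identity. This is presumably the ``positive spectral truncation'' behind Theorem~\ref{thm:tensor-intro}, and I would reuse that argument verbatim if it is stated in Section~\ref{sec:tensors}.

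With the identity in hand, Theorem~\ref{thm:support-intro} finishes the proof. If $\cN_{\mathrm{sec}}\ne\varnothing$, it is open and $T=0$ on it, so $T\equiv0$ and $\cP_{\mathrm{sec}}=\varnothing$. Likewise, if $\cP_{\mathrm{sec}}\ne\varnothing$ and $\cZ_{\mathrm{sec}}$ had interior, then $T$ would vanish on an open set, forcing $\cP_{\mathrm{sec}}=\varnothing$, a contradiction. The negative case follows by applying the same argument to $(\Ein)_+$. Combining these gives the trichotomy (a), (b), (c), and the three cases are clearly mutually exclusive.
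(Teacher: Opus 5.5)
Your proposal is correct and follows the paper's proof. The complementary-sum identity of Lemma~\ref{lem:Ein-section} makes $-\Ein$ definite on each phase, the pinching gives trace ellipticity, and Theorem~\ref{thm:support-intro} applied to the positive truncation gives the one-sided statement and hence the trichotomy.

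The differences are inessential. Your crude constant $\kappa=n/\delta_K$ replaces the sharp $n-2+2/\delta$ of Lemma~\ref{lem:section-conditioning}, which only matters for Theorem~\ref{thm:flat-intro}. You also verify the Hessian-test identity with the scalar-curvature cutoff that the paper uses for Ricci in Lemma~\ref{lem:ricci-truncation}, rather than the Stampacchia/projection argument of Lemma~\ref{lem:positive-part}. That works. Note that on the thin set definiteness alone gives $\norm{\Ein}_{\mathrm{op}}\le\tr(-\Ein)$, so the pinching is not needed at that step.
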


The next theorem gives stronger information when one fixed $\delta_0$ works globally.

\begin{theorem}[Structure of the sectional flat set]\label{thm:flat-intro}
Under the hypotheses of Theorem~\ref{thm:sectional-intro}, assume that $g$ is not flat and that $\dsec\ge\delta_0>0$ at every nonflat point.  Then:
\begin{enumerate}[label=(\roman*)]
\item $\cZ_{\mathrm{sec}}$ contains no $C^1$ immersed hypersurface piece.  More generally, in a sufficiently small normal chart it contains no Lipschitz hypersurface graph whose Lipschitz constant is below a positive threshold depending only on the local geometry and the pinching bound.
\item If $\delta_0>1/2$, then $\cZ_{\mathrm{sec}}=\varnothing$.
\item If $\delta_0=1/2$, then $\cZ_{\mathrm{sec}}$ is locally porous.  Consequently it has zero Riemannian volume, and every relatively compact subset has upper Minkowski dimension strictly less than $n$.
\end{enumerate}
\end{theorem}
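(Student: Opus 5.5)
We may assume, by Theorem~\ref{thm:sectional-intro}, that $\cN_{\mathrm{sec}}=\varnothing$ and $\cP_{\mathrm{sec}}\neq\varnothing$; the negative case follows by reversing signs. The plan is to apply the virial/support machinery behind Theorem~\ref{thm:support-intro} to $T=-\Ein$ and compare growth rates near flat points.

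\emph{Step 1: the trace-ellipticity constant.} In an orthonormal frame with $e=e_1$ we have
\[
T(e,e)=\tfrac12\Scal-\Ric(e,e)=\sum_{2\le i<j\le n}K(e_i,e_j),
\qquad
\tr_gT=\tfrac{n-2}{2}\Scal .
\]
Write $a=\Ric(e,e)$ and $b=T(e,e)$, so that $\tfrac12\Scal=a+b$. Let $m$ and $M$ be the minimum and maximum of $K$ at the point. Then $b\ge \binom{n-1}{2}m$, $a\le (n-1)M$ and $m\ge\delta_0 M$, so
\[
\frac{T(e,e)}{\tr_gT}=\frac{b}{(n-2)(a+b)}\ \ge\ \frac{\delta_0}{(n-2)\delta_0+2}.
\]
Hence $T\ge \kappa^{-1}(\tr_g T)\,g$ with the constant $\kappa=n-2+2/\delta_0$. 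The point of this computation is that $\kappa<n+2$ exactly when $\delta_0>1/2$, and $\kappa=n+2$ when $\delta_0=1/2$.

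\emph{Step 2: monotonicity from the virial identity.} Fix a flat point $p$ and work in normal coordinates centred at $p$. Apply $\diver T=0$ to the position field $x$; the identity $\diver(T(x,\cdot))=\tr T+O(|x|)\abs{T}$ holds up to metric errors. Integrating over $B_r$ gives
\[
F(r):=\int_{B_r}\tr T=r\int_{\partial B_r}T(\nu,\nu)+\text{error}.
\]
Combined with $T(\nu,\nu)\ge\kappa^{-1}\tr T$, this yields $F'(r)\le (\kappa+Cr)F(r)/r$. Therefore $r^{-\kappa}e^{-Cr}F(r)$ is nonincreasing, so $F(r)\ge c\,r^{\kappa}$ whenever $F\not\equiv0$ near $p$; the latter holds by the support theorem.

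On the other hand, the curvature is nonnegative and vanishes at $p$, so $p$ is a minimum of every sectional curvature function. Thus $\nabla\Rm(p)=0$, giving $\tr T=O(|x|^2)$ and $F(r)\le Cr^{n+2}$. For $\delta_0>1/2$ we have $\kappa<n+2$, which contradicts the lower bound as $r\to0$. This proves (ii).

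\emph{Step 3: the endpoint $\delta_0=1/2$.} Here the two exponents agree, so only a quantitative statement survives. I would combine the lower density bound $F(r)\ge c\,r^{n+2}$ with the uniform upper bound $\tr T\le C\,\dist(\cdot,\cZ_{\mathrm{sec}})^2$. The latter holds because $\Rm$ and $\nabla\Rm$ both vanish on $\cZ_{\mathrm{sec}}$, with constants locally uniform. If a ball $B_r(p)$ contained no ball of radius $\eta r$ disjoint from $\cZ_{\mathrm{sec}}$, then every point of $B_r$ would lie within $\eta r$ of $\cZ_{\mathrm{sec}}$. This would give $F(r)\le C\eta^2r^{n+2}$, which contradicts the lower bound for small $\eta$. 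Hence $\cZ_{\mathrm{sec}}$ is porous. Zero volume then follows from Lebesgue density, and the Minkowski dimension bound is the standard consequence of porosity.

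\emph{Step 4: hypersurfaces, which I expect to be the main obstacle.} Near a point of a $C^1$ (or flat Lipschitz) graph $\{x_n=f(x')\}$ contained in $\cZ_{\mathrm{sec}}$, I would use an anisotropic test function $\phi=\eta(x)\,\tfrac12(x_n-f)^2$, suitably mollified. Its Hessian is essentially $\eta\,\nu\otimes\nu$, up to terms involving $\nabla\eta\cdot(x_n-f)$ and the Lipschitz constant of $f$. The identity $\int\inner{T}{\Hess\phi}=0$, together with $T(\nu,\nu)\ge\kappa^{-1}\tr T$, should bound $\int\eta\,\tr T$ in a slab of width $h$ by $\kappa\bigl(Ch/\rho+C\,\mathrm{Lip}(f)\bigr)$ times $\int\tr T$ on a larger region. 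When $\mathrm{Lip}(f)$ is below a threshold depending on $\kappa$ and the chart, iterating over slabs forces $T=0$ on a neighbourhood. This contradicts the fact that $\cZ_{\mathrm{sec}}$ has empty interior, established in Theorem~\ref{thm:sectional-intro}. Controlling the cross terms uniformly is where I expect the real work to be. A $C^1$ piece has arbitrarily small Lipschitz constant in a small enough normal chart, so it is excluded, which gives (i).
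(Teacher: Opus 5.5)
Your Steps 1--3 are correct and are essentially the paper's proof of (ii) and (iii). Your direct bound on $T(e,e)/\tr_gT$ produces the same constant $\kappa=n-2+2/\delta_0$ that the paper obtains from the condition number $C_{n,\delta}$. Comparing $\int_{B_r}\tr T\ge cr^\kappa$ with $\tr T=\tfrac{n-2}{2}\Scal\le C\dist(\cdot,\cZ_{\mathrm{sec}})^2$ is exactly the paper's argument. Two small repairs are needed. First, points of $B_r$ near $\partial B_r$ are only within $2\eta r$ of $\cZ_{\mathrm{sec}}$; alternatively, work in $B_{r/2}$. Second, the constant $c$ must be locally uniform in the centre. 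This follows from continuity of $y\mapsto\int_{B_{R_0}(y)}\tr T$ together with compactness.

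Part (i), however, is not proved, and the route you sketch does not work.

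\emph{The test function is not defined.} $\Hess\bigl(\tfrac12(x_n-f)^2\bigr)$ contains $(x_n-f)\nabla^2f$, which does not exist for a $C^1$ or Lipschitz $f$. Mollifying $f$ leaves an error comparable to $\operatorname{Lip}(f)$ times the whole mass. Moreover, the mollified graph no longer lies in the zero set.

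\emph{The slab estimate cannot force vanishing.} This is the more fundamental problem: nothing in your estimate uses $T|_\Gamma=0$, and without that hypothesis no estimate of this kind can work. Indeed, $T=g$ on $\R^n$ is divergence-free with $\kappa=n$. A slab of width $h$ and lateral radius $\rho$ carries about $h/\rho$ times the $g$-mass of a ball of radius $\rho$, but a fixed fraction of the mass of the doubled slab. So your inequality is either useless or false, depending on which region is on the right. Even a genuine contraction $m(h)\le\theta\,m(2h)$ gives only power decay, not vanishing on an open set.

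\emph{What the paper does instead.} The vanishing on $\Gamma$ has to enter as the initial condition of an ODE inequality, via a first-order flux argument.
\begin{itemize}
\item Work in small normal coordinates with $\Gamma=\{x^1=\gamma(x')\}$ and set $W^i=\sqrt{\det g}\,T^{i1}$.
\item The condition-number bound gives the cone condition $\abs{W'}\le c_*W^1$. Since the Christoffel symbols are $O(\abs x)$, one also has $\abs{\partial_iW^i}\le AW^1$.
\item Apply Gauss--Green on the region between $\Gamma$ and its vertical translate by $t$. Cap this region by a downward cone of slope at most $c_*^{-1}$, so that there is no lateral boundary.
\item The base flux vanishes because $T=0$ on $\Gamma$, and the cap flux is nonnegative.
\item The top flux is at least $(1-c_*\operatorname{Lip}\gamma)\varphi(t)$, where $\varphi(t)=\int W^1(\gamma(x')+t,x')\dd x'$.
\end{itemize}
Hence $\varphi(t)\le A'\int_0^t\varphi$, and Gronwall gives $\varphi\equiv0$. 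So $T$ vanishes on an open set, which by Theorem~\ref{thm:support-intro} forces $T\equiv0$, contradicting non-flatness. Only first derivatives of $\gamma$ enter. This is exactly why Lipschitz graphs with $\operatorname{Lip}\gamma<c_*^{-1}$ are excluded, and hence also $C^1$ pieces.
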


The strict threshold in part (ii) is optimal: Section~\ref{sec:sectional-sharpness} gives exactly $1/2$-pinched metrics on $\Sph^n$ that are positively curved away from two flat poles.  Uniform pinching itself is indispensable.  We construct a conformally flat metric whose sectional curvature is negative on one side of a flat hypersurface and positive on the other, with
\[
\dsec=e^{-1/d+o(1)}
\quad\text{on the negative side},
\qquad
\dsec=(4+o(1))d^4
\quad\text{on the positive side},
\]
where $d$ is distance to the interface.

\subsection{Ricci-curvature results}

Let $\cP_{\Ric}$, $\cN_{\Ric}$, and $\cZ_{\Ric}$ denote the positive, negative, and Ricci-flat sets.

\begin{theorem}[Sharp Ricci threshold]\label{thm:ricci-intro}
Let $(M^n,g)$ be connected, $n\ge3$, and satisfy the Ricci sign trichotomy.
\begin{enumerate}[label=(\alph*)]
\item If the Ricci pinching is locally uniformly supercritical, then $\cP_{\Ric}$ and $\cN_{\Ric}$ cannot both be nonempty.  In particular, the conclusion holds under one global bound
\[
\dric\ge\delta_0>\frac1{n-1}.
\]
More generally, a locally uniform supercritical bound on $\cP_{\Ric}$ alone implies that either $\cP_{\Ric}=\varnothing$, or $\cN_{\Ric}=\varnothing$ and $\cZ_{\Ric}$ has empty interior; symmetrically for $\cN_{\Ric}$.
\item For every $0<\delta<1/(n-1)$ there is a smooth metric on $(-\eps,\eps)\times\T^{n-1}$ such that
\[
\Ric>0\ (t<0),\qquad \Ric=0\ (t=0),\qquad \Ric<0\ (t>0),
\]
and $\dric\equiv\delta$ at every $t\ne0$.
\item There is a connected smooth local example with nonempty positive, negative, and Ricci-flat regions and
\[
\dric\equiv\frac1{n-1}
\]
at every non-Ricci-flat point.
\item There are connected local examples with both Ricci signs such that
\[
\dric(p)>\frac1{n-1}
\]
at every non-Ricci-flat point, while the infimum of $\dric$ is exactly $1/(n-1)$.
\end{enumerate}
\end{theorem}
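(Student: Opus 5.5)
The plan for (a) is to apply Theorem~\ref{thm:tensor-intro}, and its support-theorem refinement, to the tensor $E=-\Ein=\frac12\Scal\,g-\Ric$. This tensor is $C^1$ and divergence-free by \eqref{eq:Bianchi}.

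\textbf{Step 1: $-\Ein$ inherits the Ricci sign.} Suppose at a point of $\cP_{\Ric}$ the Ricci eigenvalues satisfy $\delta\lambda_{\max}\le\lambda_i\le\lambda_{\max}$ with $\delta=\dric(p)$. Then $\Scal\ge\lambda_{\max}(1+(n-1)\delta)$. Hence every eigenvalue of $-\Ein$ satisfies
\[
\tfrac12\Scal-\lambda_i\ \ge\ \tfrac12\lambda_{\max}\bigl((n-1)\delta-1\bigr),
\]
which is strictly positive exactly when $\delta>\delta_c$. In the other direction, each such eigenvalue is at most $\tfrac12\Scal\le\tfrac n2\lambda_{\max}$. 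So on $\cP_{\Ric}$, $-\Ein$ is positive definite with condition number at most $n/((n-1)\delta-1)$. On a compact $K$ this is at most $n/((n-1)\eps_K)$. The same computation with signs reversed shows $-\Ein<0$ on $\cN_{\Ric}$, and clearly $-\Ein=0$ on $\cZ_{\Ric}$. Theorem~\ref{thm:tensor-intro} then forbids both signs, which proves the two-sided statement and the global-constant special case.

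\textbf{Step 2: the one-sided version.} Here I would not invoke Theorem~\ref{thm:tensor-intro} directly. Instead I would rerun its proof, which passes through the positive spectral truncation $T=(-\Ein)_+$ (equal to $-\Ein$ on $\cP_{\Ric}$ and $0$ elsewhere) and then through Theorem~\ref{thm:support-intro}. That argument only needs the trace-ellipticity \eqref{eq:trace-ell-intro} on the set where $T\ne0$, that is, on $\cP_{\Ric}$, with $\kappa$ locally bounded as in Step 1.

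If $\cP_{\Ric}\ne\varnothing$, then either $\cN_{\Ric}$ or an open subset of $\cZ_{\Ric}$ would be a nonempty open set on which $T=0$. Theorem~\ref{thm:support-intro} would then give $T\equiv0$, contradicting the fact that $\cP_{\Ric}$ is a nonempty open set. The symmetric case uses $\Ein_+$ in place of $(-\Ein)_+$.

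\textbf{Constructions for (b)--(d).} I would use a multiply warped product
\[
g=dt^2+\sum_i e^{2u_i(t)}dx_i^2\quad\text{on }(-\eps,\eps)\times\T^{n-1}.
\]
Its Ricci tensor is diagonal, with entries $-\sum_i(u_i''+u_i'^2)$ and $-(u_i''+u_i'\sum_j u_j')$. A single warping function cannot work. With $h=f'/f$, the ansatz $h'=-ch^2$ forces both eigenvalues to be fixed multiples of $h^2$, so the sign cannot change. I would therefore take two groups of directions, with exponents $u$ on $k$ of the circles and $v$ on the remaining $n-1-k$. I would then prescribe the ratio pattern of the eigenvalues: for $t<0$ the eigenvalues are positive with ratio $\delta$, and for $t>0$ they are negative with ratio $\delta$. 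This turns the requirement into a first-order autonomous system for $(u',v')$.

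The key point is to find a solution in which all curvature quantities vanish to infinite order at $t=0$. Concretely, I would look for a solution of the form $u'=\alpha\,\phi(t)$, $v'=\beta\,\phi(t)$ with $\phi$ flat at $0$, such as $\phi=e^{-1/|t|}$ on each side, and with constants $(\alpha,\beta)$ differing across $t=0$. This makes the metric smooth. The algebraic condition that yields exact ratio $\delta$ with opposite signs on the two sides should be solvable precisely when $\delta<\delta_c$, mirroring the sign of $(n-1)\delta-1$ in Step 1.

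For (c), at $\delta=\delta_c$ I would insert a flat slab $\{|t|\le1\}$ with $u'=v'=0$ and glue the degenerate solutions on either side. For (d), I would perturb this so that $\dric=\delta_c+\psi(t)$ with $\psi>0$ and $\psi\to0$ at the interface.

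\textbf{Main obstacle.} I expect the hardest step to be the exact-ratio ODE analysis for (b). One must show that the system admits a smooth solution whose quantities are flat at $t=0$ and whose sign flips while $\dric$ stays identically $\delta$. Step 1 predicts that this is possible only for $\delta\le\delta_c$, and verifying that the resulting algebraic constraints are consistent exactly in the subcritical range is the delicate part.
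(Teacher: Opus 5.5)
Your two-sided argument for (a) is fine. With supercritical bounds on both phases, $-\Ein$ is definite or zero everywhere with locally bounded condition number, so Theorem~\ref{thm:tensor-intro} applies directly.

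The one-sided refinement does not follow from Step~2. With a bound on $\cP_{\Ric}$ only, nothing controls $-\Ein$ on $\cN_{\Ric}$. Its eigenvalues there are $\frac12\bigl(\abs{\lambda_i}-\sum_{j\ne i}\abs{\lambda_j}\bigr)$, which is positive for the dominant $\lambda_i$ as soon as $\dric<1/(n-1)$ at that point. So $(-\Ein)_+$ need not vanish on $\cN_{\Ric}$. At indefinite points Lemma~\ref{lem:positive-part} is also unavailable, since it assumes $E$ is definite or zero everywhere, so $(-\Ein)_+$ need not be divergence-free.

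The missing idea is to truncate by phase rather than spectrally: take $T_+=-\mathbf 1_{\{\Scal>0\}}\Ein$. Since $\diver\bigl(\chi_\eps(\Scal)\Ein\bigr)=\chi_\eps'(\Scal)\Ein(\nabla\Scal,\cdot)$ and $\norm{\Ein}_{\mathrm{op}}\le\frac12\Scal$ on the positive phase, the error is $O(\abs{\nabla\Scal})\mathbf 1_{\{0<\Scal<\eps\}}\to0$ in $L^1_{\mathrm{loc}}$. This is Lemma~\ref{lem:ricci-truncation}, and it uses only the Ricci trichotomy, with no pinching on $\cN_{\Ric}$.

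For (b)--(d) you have no working construction, and your concrete ansatz provably fails. Take $u'=\alpha\phi$ on $p$ circles and $v'=\beta\phi$ on $q$ circles, and set $s=p\alpha+q\beta$, $Q=p\alpha^2+q\beta^2$. Then $\rho_u=-\alpha(\phi'+s\phi^2)$, $\rho_v=-\beta(\phi'+s\phi^2)$, and $\rho_0=-(s\phi'+Q\phi^2)$.
\begin{itemize}
\item Definiteness forces $\alpha\beta>0$, hence $\abs{s}>\max(\abs\alpha,\abs\beta)$.
\item Where $\phi^2=o(\phi')$, $\rho_0$ is therefore the extremal eigenvalue.
\item Constancy of $\rho_u/\rho_0$ then forces either $Q=s^2$, which is impossible because $s^2-Q=p(p-1)\alpha^2+q(q-1)\beta^2+2pq\alpha\beta>0$, or $\phi'\propto\phi^2$, a non-flat Riccati solution.
\end{itemize}
Proportional exponents reduce everything to one function, which is exactly the single-warping obstruction you noted yourself. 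Prescribing the full ratio pattern instead gives, when solvable for $(A',B')$, a homogeneous quadratic system whose only solution with flat interface data is zero.

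The paper imposes only the extremal relation $\rho_0=C\rho_1$ with $C=\delta^{-1}$, that is, $A'=\alpha(B'+B^2)-C\alpha AB-A^2$ with $\alpha=k/(C-1)$. It takes the non-flat choice $B=t^2$ with $A(0)=0$ and leaves the middle eigenvalue free. Then $\rho_1=-2\alpha t+O(t^4)$ changes sign, and $\rho_B/\rho_1\to(C-1)/k\in(1,C)$ precisely because $C>k+1$.

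For (c) and (d), a Cartesian flat slab ($u'=v'=0$, respectively $f'(0)=0$) fails on the positive side.
\begin{itemize}
\item In (c), with $B=-b$ the critical relation forces $A=B+o(b)$. Hence $\rho_B-\rho_1=-2AB-(k-1)B^2=-(k+1)b^2(1+o(1))<0$, so $\dric<1/(n-1)$. The labeling $\rho_0=m\rho_B$ fails similarly.
\item In (d), torus fibers give $\lambda_S=b-(m-1)(f'/f)^2<b$.
\end{itemize}
The needed idea is a flat core in polar coordinates: $A(0)=1/R$ in (c), respectively $f=R+s$ over round spheres in (d). Then $Ab$, respectively $\psi=(1-f'^2)/f^2>0$, dominates $b^2$ and pushes the middle eigenvalue strictly inside. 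The positive and negative collars are then attached over disjoint patches of the common outer boundary $\{r=R\}$, not on two sides of a slab.
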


Thus the threshold and the need for a locally uniform gap are both sharp.  The rigidity statement requires neither completeness nor compactness.

For the weaker global lower-bound problem there are explicit compact examples.

\begin{theorem}[Closed and complete subcritical lower-bound examples]\label{thm:global-intro}
Let $n\ge3$, put $m=n-1$, and let $0<\delta<1/m$.  Define
\begin{equation}\label{eq:a-delta-intro}
 a_\delta=\frac{m(1-\delta)}{1-m\delta}.
\end{equation}
Then the metric
\begin{equation}\label{eq:compact-family-intro}
 g_\delta=\dd t^2+(a_\delta-\cos t)^2g_{\Sph^m}
\end{equation}
on $\Sph^1\times\Sph^m$ satisfies the Ricci sign trichotomy, has both Ricci signs, and obeys
\[
\dric\ge\delta
\]
at every non-Ricci-flat point, with equality somewhere.  Its optimal global lower pinching constant is therefore exactly $\delta$.  The lift to $\R\times\Sph^m$ is complete and has the same properties.
\end{theorem}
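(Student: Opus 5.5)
The plan is a direct computation: the metric is a warped product $\dd t^2+f(t)^2g_{\Sph^m}$ with $f=a-\cos t$, so its Ricci tensor is diagonal with explicit eigenvalues, and every claim reduces to elementary inequalities in $c=\cos t$. I would first note that $a_\delta>1$, which makes $f$ positive and $g_\delta$ a smooth metric. Indeed $\delta<1/m$ gives $1-\delta>1-m\delta>0$, hence $a_\delta>m\ge2$. Since $f$ is $2\pi$-periodic, the metric descends from $\R\times\Sph^m$ to $\Sph^1\times\Sph^m$.

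Next I would use the standard warped product formulas (e.g.\ \cite{Besse}). In the $\partial_t$ direction and in unit sphere directions the Ricci eigenvalues are
\[
\lambda_1=-m\frac{f''}{f},\qquad \lambda_2=\frac{-ff''+(m-1)(1-f'^2)}{f^2}.
\]
Substituting $f'=\sin t$, $f''=\cos t$ and $1-\sin^2t=\cos^2t$ gives
\[
\lambda_1=\frac{m\,c\,(c-a)}{f^2},\qquad \lambda_2=\frac{c\,(mc-a)}{f^2},\qquad c=\cos t.
\]
Since $a>m\ge mc$ and $a>c$, both factors $c-a$ and $mc-a$ are negative. Hence both eigenvalues have the sign of $-c$, and they vanish exactly when $c=0$. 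This yields the trichotomy:
\begin{itemize}
\item $\Ric<0$ where $\cos t>0$;
\item $\Ric>0$ where $\cos t<0$;
\item $\Ric=0$ on the two slices $\cos t=0$.
\end{itemize}
In particular both Ricci signs occur.

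For the pinching at a non-Ricci-flat point, the ratio of the two eigenvalue magnitudes is
\[
r(c)=\frac{a-mc}{m(a-c)}.
\]
This is at most $1$ because $a\le ma$. Its derivative in $c$ has the sign of $-m(a-c)+(a-mc)=a(1-m)<0$, so $r$ is decreasing on $[-1,1]$. Therefore $\dric=r(\cos t)\ge r(1)=(a-m)/(m(a-1))$, with equality at $t=0$. Plugging in $a_\delta$ gives
\[
a_\delta-m=\frac{m\delta(m-1)}{1-m\delta},\qquad a_\delta-1=\frac{m-1}{1-m\delta},
\]
so $r(1)=\delta$ exactly. Hence $\dric\ge\delta$ everywhere on the non-Ricci-flat set, with equality on the slice $t=0$, and the optimal global lower pinching constant is $\delta$. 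The fact that $r$ is not constant also shows that the pinching is not pointwise constant.

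Finally, completeness of the lift. On $\R\times\Sph^m$ we have $a-1\le f\le a+1$, so $g_\delta$ is uniformly bi-Lipschitz to the complete product metric $\dd t^2+g_{\Sph^m}$. Cauchy sequences are therefore the same for both metrics, and $g_\delta$ is complete. All local curvature properties pass to the lift unchanged. The only step needing care is getting the warped-product Ricci formula and its sign conventions right; everything after that is algebra.
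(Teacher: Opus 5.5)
Your proof is correct and follows the paper's argument essentially step for step: the same warped-product Ricci eigenvalues, the observation that both have the sign of $-\cos t$, the ratio $(a-mc)/(m(a-c))$ shown to be decreasing in $c$ and equal to $\delta$ at $c=1$, and completeness from $a-1\le f\le a+1$. You also write out the algebra giving $r(1)=\delta$, which the paper leaves as a substitution.
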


The pinching in \eqref{eq:compact-family-intro} varies with $t$ and tends to $1/m$ at the Ricci-flat hypersurfaces.  Hence these are not exact pointwise-$\delta$ examples.  We also prove that two direct compactification schemes for the exact local metrics fail: an exact subcritical or critical phase cannot emerge from a Ricci-flat boundary in the singly warped space-form class, and a fixed labeled extremal relation in the periodic doubly warped class forces flatness.

In preparation of this paper, we had several rounds of conversations with ChatGPT 5.6 sol, and obtained results of increasing generality. At the end of the conversation, we asked ChatGPT to organize all results into a single paper, and then we reviewed the generated paper and revised it wherever necessary.

\textbf{Acknowledgement:} we thank Felix Ye and Ziheng Zou for using AI agents to give independent proofs, which have been absorbed into this paper. We also thank Zipei Nie for using ChatGPT to provide partial results.

\section{Divergence-free tensors and support rigidity}\label{sec:tensors}

This section proves Theorems~\ref{thm:support-intro} and~\ref{thm:tensor-intro}.  The support theorem requires only the Hessian-test identity, not the full first-order divergence equation.  In Euclidean coordinates it is a double-divergence or adjoint equation.  Writing $T=\rho A$, where $\rho=\tr T$ and $A=T/\tr T$, connects it with the theory of nonnegative adjoint solutions for uniformly elliptic nondivergence operators; see \cite{Bauman,FabesStroock,DongEscauriazaKim}.  The proof below is instead a direct virial argument.

\subsection{A local virial monotonicity formula}

\begin{lemma}[Quadratic-distance Hessian estimate]\label{lem:hess-distance}
For every $y\in M$ there are $r_0>0$ and $C_0<\infty$ such that $B_{r_0}(y)$ is a normal ball and, with $\rho=d(y,\cdot)$ and $u=\rho^2/2$,
\begin{equation}\label{eq:hess-error}
\norm{\Hess u-g}_{\mathrm{op}}\le C_0\rho^2
\qquad\text{on }B_{r_0}(y).
\end{equation}
The constants may be chosen uniformly for $y$ in a relatively compact set.
\end{lemma}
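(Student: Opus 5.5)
The plan is to reduce the estimate to a Jacobi-field (or Riccati) comparison for the Hessian of the distance function $\rho$. Take $r_0$ smaller than, say, half the injectivity radius at $y$, so that $B_{2r_0}(y)$ is a normal ball and $u=\rho^2/2$ is smooth there; $u$ is smooth even at $y$ because $u(\exp_y v)=\abs{v}^2/2$. Away from $y$ we have
\[
\Hess u=d\rho\otimes d\rho+\rho\Hess\rho .
\]
Since $d\rho\otimes d\rho$ already agrees with $g$ in the radial direction and $\Hess\rho(\partial_\rho,\cdot)=0$, it suffices to show that on the orthogonal complement of $\partial_\rho$
\[
\bigl\|\rho\Hess\rho-\mathrm{id}\bigr\|_{\mathrm{op}}\le C_0\rho^2 .
\]

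To prove this, write $S=\Hess\rho$ restricted to the tangent spaces of the distance spheres. Along a unit-speed radial geodesic, $S$ satisfies the Riccati equation $S'+S^2+R_{\partial_\rho}=0$ with asymptotics $S\sim \frac1\rho\,\mathrm{id}$ as $\rho\to0$. I would use the fact that $\rho S=\mathrm{id}+O(\rho^2)$ with a constant controlled by $\sup\abs{\Rm}$ on the ball. One route is to set $B=\rho S-\mathrm{id}$, which gives
\[
\rho B'+B+B^2=-\rho^2R,\qquad B(0)=0,
\]
and integrate this to get $\abs{B}\le C\rho^2$. Equivalently, one can express $S=J'J^{-1}$ through Jacobi fields with $J(0)=0$ and $J'(0)=\mathrm{id}$, expand $J(\rho)=\rho\,\mathrm{id}-\tfrac{\rho^3}{6}R+O(\rho^4)$, and read off the estimate. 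A cleaner alternative avoids singular ODEs altogether: in normal coordinates $u=\tfrac12\abs{x}^2$ is smooth, the Christoffel symbols vanish at $y$, and
\[
\Hess u_{ij}=\delta_{ij}-\Gamma_{ij}^k x_k=g_{ij}+O(\abs{x}^2),
\]
because $g_{ij}-\delta_{ij}=O(\abs{x}^2)$ and $\Gamma=O(\abs{x})$. This last computation gives \eqref{eq:hess-error} immediately, with $C_0$ depending on $C^2$ bounds of the metric in normal coordinates.

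The remaining issue, and the main obstacle, is uniformity in $y$ over a relatively compact set $U$. I would handle it by compactness. The injectivity radius is continuous, hence positive and bounded below on $\overline U$, so a single $r_0$ works. The normal-coordinate metric coefficients depend smoothly on $(y,x)$ through the exponential map on a compact subset of $TM$, which gives uniform bounds on $g_{ij}$, its inverse, and two derivatives, and hence a uniform $C_0$. In the Jacobi-field route one gets the same conclusion directly from the uniform bound on $\abs{\Rm}$ over the compact $\overline{U}_{r_0}$, and that route also supplies the constant via Rauch/Riccati comparison without needing metric derivatives beyond curvature.
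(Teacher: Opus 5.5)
Your ``cleaner alternative'' (in normal coordinates $\Hess u_{ij}=\delta_{ij}-\Gamma^k_{ij}x_k$ with $\Gamma=O(\abs{x})$ and $g_{ij}=\delta_{ij}+O(\abs{x}^2)$, then compactness for uniformity) is essentially the paper's proof. The Riccati/Jacobi-field routes are correct but unnecessary. One small adjustment: the paper assumes no completeness, so obtain the uniform normal radius on $\overline U$ from uniformly (totally) normal neighborhoods and a finite cover, rather than from continuity of the injectivity radius, which is the standard fact only in the complete case.
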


\begin{proof}
In geodesic normal coordinates centered at $y$, one has $u(x)=\abs{x}^2/2$, while $g_{ij}=\delta_{ij}+O(\abs{x}^2)$ and the Christoffel symbols are $O(\abs{x})$.  Hence $\nabla^2u=g+O(\rho^2)$.  Uniformity on compact sets follows from a uniformly normal neighborhood and compactness; see \cite[Chapter 6]{Lee} or \cite[Chapter 3]{doCarmo}.
\end{proof}

\begin{lemma}[Virial identity and monotonicity]\label{lem:virial}
Let $T\in L^1(B_{r_0}(y);\operatorname{Sym}^2T^*M)$ be positive semidefinite and satisfy
\[
\int\inner{T}{\Hess\phi}\dd V=0
\quad\text{for every }\phi\in C_c^\infty(B_{r_0}(y)).
\]
Assume $T\ge\bar\kappa^{-1}(\tr T)g$ almost everywhere for some constant $\bar\kappa\ge n$.  Put
\[
m(R)=\int_{B_R(y)}\tr T\dd V.
\]
After decreasing $r_0$ if necessary, there is $C_0$ as in Lemma~\ref{lem:hess-distance} such that
\begin{equation}\label{eq:monotone-quantity}
R\longmapsto R^{-\bar\kappa}
\exp\!\left(-\frac{\bar\kappa C_0R^2}{2}\right)m(R)
\end{equation}
is nonincreasing on $(0,r_0)$.
\end{lemma}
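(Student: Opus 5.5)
The plan is to test the Hessian identity against a radial cutoff of $u=\rho^2/2$ and compare the resulting identity with the pinching inequality. Fix $0<R<r_0$. Take $\psi_\eps:\R\to[0,1]$ smooth, nonincreasing, equal to $1$ on $(-\infty,R-\eps]$ and $0$ on $[R,\infty)$. For simplicity, replace the distance by a function of $u$: set $\phi=\Psi(u)$, where $\Psi'(s)=-\psi_\eps(\sqrt{2s})$ and $\Psi$ vanishes for $s\ge R^2/2$. Then $\phi\in C_c^\infty(B_{r_0}(y))$, and smoothness at $y$ holds because $u$ is smooth on the normal ball. Its Hessian is
\[
\Hess\phi=\Psi'(u)\Hess u+\Psi''(u)\,du\otimes du .
\]
Here $\Psi'=-\psi_\eps(\rho)$ and $\Psi''(u)=-\psi_\eps'(\rho)/\rho\ge0$.

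Inserting this into the identity gives
\[
\int\psi_\eps(\rho)\inner{T}{\Hess u}=\int\frac{-\psi_\eps'(\rho)}{\rho}\,T(\nabla u,\nabla u).
\]
Since $\nabla u=\rho\nabla\rho$, the right side equals $\int(-\psi_\eps'(\rho))\,\rho\,T(\nabla\rho,\nabla\rho)$.

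For the left side I use Lemma~\ref{lem:hess-distance} together with $T\ge0$. This gives $\inner{T}{\Hess u}\ge(1-C_0\rho^2)\tr T$, because $\inner{T}{B}\ge-\norm{B}_{\mathrm{op}}\tr T$ for positive semidefinite $T$.

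For the right side I need an upper bound on $T(\nabla\rho,\nabla\rho)$ in terms of $\tr T$. The pinching condition only gives a lower bound, but the two are related. Every eigenvalue of $T$ is at least $\bar\kappa^{-1}\tr T$, so the largest eigenvalue is at most
\[
\tr T-(n-1)\bar\kappa^{-1}\tr T.
\]
This is at most $\bar\kappa^{-1}\tr T$ only when $\bar\kappa=n$, so that naive route fails. I therefore expect the correct route to keep $T(\nabla\rho,\nabla\rho)$ inside the Hessian itself. Write
\[
\Hess u=\Hess u-g+g,
\]
and also use $\rho^{-1}$ times the trace part. More precisely, pick $\phi=\Psi(u)$ with $\Psi$ chosen as a solution of the ODE that makes the combination $\Psi'\,\tr T+\Psi''\,2u\,T(\nabla\rho,\nabla\rho)$ controlled. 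Concretely: the lower bound $T(\nabla\rho,\nabla\rho)\ge\bar\kappa^{-1}\tr T$ is useful when $\Psi''\le0$ in the relevant region. So instead I take $\Psi$ concave in $\rho$ near $R$, for example $\phi=\eta(\rho)$ with $\eta(\rho)=(R^2-\rho^2)_+$-type profiles smoothed. Alternatively I take the difference of two radii, testing with $\eta(\rho)=\max(R^{2}-\rho^{2},0)$ approximations, to get an integrated identity of the form
\[
\int_{B_R}\inner{T}{\Hess u}=\text{(boundary flux)}\cdot R.
\]
The flux term is $\frac{d}{dR}$ of $\int_{B_R}T(\nabla\rho,\nabla\rho)$ weighted by $\rho$. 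Then I compare it with $m'(R)=\int_{\partial B_R}\tr T$, using $T(\nabla\rho,\nabla\rho)\le\tr T$ and $T(\nabla\rho,\nabla\rho)\ge\bar\kappa^{-1}\tr T$ as needed. The aim is the differential inequality
\[
\bar\kappa(1-C_0R^2)\,m(R)\le R\,m'(R)
\]
in reversed form, namely $R\,m'(R)\le\bar\kappa(1+C_0R^2)\,m(R)$, valid in the distributional sense. That inequality is exactly what makes the quantity \eqref{eq:monotone-quantity} nonincreasing. Its logarithmic derivative is
\[
\frac{m'}{m}-\frac{\bar\kappa}{R}-\bar\kappa C_0R,
\]
which must be $\le0$.

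The final step is then routine. Let $\eps\to0$ using $T\in L^1$, so that $m$ is monotone and absolutely continuous in the a.e./distributional sense. Then integrate the inequality in $\log R$. Decreasing $r_0$ ensures $C_0R^2<1$.

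I expect the main obstacle to be the middle step. The difficulty is choosing the test function so that the pinching lower bound $T(\nabla\rho,\nabla\rho)\ge\bar\kappa^{-1}\tr T$ enters with the right sign to bound $R\,m'(R)$ by $\bar\kappa\,m(R)$. The first thing I would try is $\phi=\eta_\eps(\rho)(R^2-\rho^2)$ and its limit. In that limit the interior term is $-2\int_{B_R}\inner{T}{\Hess u}\approx-2\int_{B_R}\tr T$, and the boundary term becomes $2R\int_{\partial B_R}T(\nabla\rho,\nabla\rho)\ge2R\,\bar\kappa^{-1}m'(R)$. Together these yield $R\,m'\le\bar\kappa(1+C_0R^2)\,m$, as desired.
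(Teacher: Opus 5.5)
Your final argument is correct and is essentially the paper's proof: a radial test function whose Hessian is $\chi(\rho)\Hess u$ plus a multiple of $\dd\rho\otimes\dd\rho$ yields the virial identity $\int_{B_R}\inner{T}{\Hess u}\dd V=R\int_{\partial B_R}T(\nabla\rho,\nabla\rho)\dd A$ for a.e.\ $R$, whose common value is bounded above by $(1+C_0R^2)m(R)$ and below by $R\,m'(R)/\bar\kappa$. The only technical difference is that your cutoff equals $1$ near $y$, so $\phi$ is $\mathrm{const}-u$ there and still smooth; this gives the integrated identity directly at Lebesgue points, whereas the paper derives the distributional equation $(R\sigma)'=\tau+E$ and then checks that $R\sigma(R)\to0$ as $R\to0$.

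The ``main obstacle'' you anticipate does not exist. Your very first test function $\Psi(u)$ already suffices, provided you use Lemma~\ref{lem:hess-distance} as the upper bound $\inner{T}{\Hess u}\le(1+C_0\rho^2)\tr T$ rather than the lower bound you first wrote. No upper bound on $T(\nabla\rho,\nabla\rho)$ is ever needed.
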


\begin{proof}
Let $\rho=d(y,\cdot)$ and $u=\rho^2/2$.  For almost every $R$ define
\[
\tau(R)=\int_{\partial B_R(y)}\tr T\dd A,
\qquad
\sigma(R)=\int_{\partial B_R(y)}T(\nabla\rho,\nabla\rho)\dd A,
\]
and
\[
E(R)=\int_{\partial B_R(y)}\inner{T}{\Hess u-g}\dd A.
\]
By positivity, $0\le\sigma\le\tau$, and \eqref{eq:hess-error} gives
\begin{equation}\label{eq:E-bound}
\abs{E(R)}\le C_0R^2\tau(R)
\end{equation}
for almost every $R$.  The coarea formula implies that $m$ is absolutely continuous and $m'=\tau$ almost everywhere.

Fix $\chi\in C_c^\infty((0,r_0))$ and define
\[
\Psi(s)=-\int_s^{r_0}t\chi(t)\dd t,
\qquad
\phi=\Psi\circ\rho.
\]
The function $\phi$ is smooth: it is constant near $y$ and vanishes near $\partial B_{r_0}(y)$.  Since $\Psi'(s)=s\chi(s)$,
\[
\Hess\phi=\rho\chi'(\rho)\dd\rho\otimes\dd\rho+\chi(\rho)\Hess u.
\]
Testing against $T$ and using coarea yields
\[
\int_0^{r_0}\chi'(R)R\sigma(R)\dd R
 +\int_0^{r_0}\chi(R)\bigl(\tau(R)+E(R)\bigr)\dd R=0.
\]
Thus, in distributions on $(0,r_0)$,
\begin{equation}\label{eq:Fprime}
\bigl(R\sigma(R)\bigr)'=\tau(R)+E(R).
\end{equation}
For every $R<r_0$, \eqref{eq:E-bound} gives
\[
\abs{\tau+E}\le(1+C_0r_0^2)\tau\in L^1(0,R).
\]
Hence $r\sigma(r)$ has an absolutely continuous representative on $[0,R]$ and a finite limit at zero.  Moreover,
\[
\int_0^R\frac{s\sigma(s)}s\dd s=\int_0^R\sigma(s)\dd s\le m(R)<\infty.
\]
Since $s\sigma(s)\ge0$, that limit must be zero.  Integrating \eqref{eq:Fprime} gives
\begin{equation}\label{eq:virial-identity}
R\sigma(R)=m(R)+\int_0^RE(s)\dd s
\end{equation}
for almost every $R$.  By \eqref{eq:E-bound},
\begin{equation}\label{eq:virial-error}
\abs{\int_0^RE(s)\dd s}
\le C_0R^2m(R).
\end{equation}

The trace ellipticity gives
\[
\sigma(R)\ge\frac1{\bar\kappa}\tau(R)
=\frac1{\bar\kappa}m'(R)
\]
for almost every $R$.  Combining this with the upper bound in \eqref{eq:virial-identity}--\eqref{eq:virial-error},
\[
\frac R{\bar\kappa}m'(R)
\le(1+C_0R^2)m(R).
\]
Equivalently,
\[
\frac{\dd}{\dd R}
\left[R^{-\bar\kappa}e^{-\bar\kappa C_0R^2/2}m(R)\right]\le0
\]
for almost every $R$, proving the claim.
\end{proof}

\begin{corollary}[Radially integrable loss of conditioning]\label{cor:radial-conditioning}
In the setting of Lemma~\ref{lem:virial}, replace the constant $\bar\kappa$ by a measurable function $\kappa\ge n$ and assume
\[
T\ge\frac1\kappa(\tr T)g.
\]
For almost every $r$, let
\[
\kappa_y(r)=\operatorname*{ess\,sup}_{\partial B_r(y)}\kappa.
\]
If $T=0$ almost everywhere on $B_\eta(y)$ and, for some $R\in(\eta,r_0)$,
\begin{equation}\label{eq:radial-integrability}
\int_\eta^R\kappa_y(r)\left(\frac1r+C_0r\right)\dd r<\infty,
\end{equation}
then $T=0$ almost everywhere on $B_R(y)$.
\end{corollary}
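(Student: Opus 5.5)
We rerun the proof of Lemma~\ref{lem:virial}, but with the variable function $\kappa$ in place of the constant $\bar\kappa$ and integrating outward from the vanishing ball. The virial identity \eqref{eq:virial-identity} and the error bound \eqref{eq:virial-error} used only positivity, the Hessian-test identity and Lemma~\ref{lem:hess-distance}, not the conditioning, so they remain valid. Since $T=0$ a.e.\ on $B_\eta(y)$, we have $m\equiv0$ on $(0,\eta]$, and the task is to show $m(R)=0$.

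\textbf{Step 1 (differential inequality).} By coarea, for a.e.\ $r$ the restriction of $T$ to $\partial B_r(y)$ satisfies $T\ge\kappa^{-1}(\tr T)g\ge\kappa_y(r)^{-1}(\tr T)g$ a.e.\ on the sphere. Evaluating on $\nabla\rho$ and integrating gives
\[
\sigma(r)\ge\kappa_y(r)^{-1}\tau(r)=\kappa_y(r)^{-1}m'(r).
\]
This requires $\kappa_y(r)<\infty$. Where $\kappa_y(r)=\infty$ on a positive-measure set, the integrability hypothesis \eqref{eq:radial-integrability} is violated, so $\kappa_y<\infty$ a.e.\ on $(\eta,R)$. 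Combining with \eqref{eq:virial-identity}--\eqref{eq:virial-error} as before yields, for a.e.\ $r\in(\eta,R)$,
\[
m'(r)\le\kappa_y(r)\Bigl(\frac1r+C_0r\Bigr)m(r).
\]

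\textbf{Step 2 (Gr\"onwall).} Set $a(r)=\kappa_y(r)(1/r+C_0r)$, which is nonnegative and in $L^1(\eta,R)$ by hypothesis. Since $m$ is absolutely continuous by coarea (as $\tr T\in L^1$), the function $e^{-\int_\eta^r a}\,m(r)$ is absolutely continuous with nonpositive derivative a.e. It is therefore nonincreasing on $[\eta,R]$. Its value at $\eta$ is $m(\eta)=0$, and $m\ge0$, so $m\equiv0$ on $[\eta,R]$. Hence $\tr T=0$ a.e.\ on $B_R(y)$, and positivity then forces $T=0$ a.e.\ there.

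\textbf{Main obstacle.} The delicate point is measurability and the a.e.\ bookkeeping. One must check that $r\mapsto\kappa_y(r)$ is measurable, or else replace it by any measurable majorant, which suffices since only an upper bound is used. One must also check that the sphere-wise inequality $T\ge\kappa_y(r)^{-1}(\tr T)g$ holds for a.e.\ $r$, which follows from Fubini/coarea applied to the null set where the pointwise conditioning fails. The rest is exactly the computation in Lemma~\ref{lem:virial}, with the constant exponent replaced by $\int a$.
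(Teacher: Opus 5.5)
Your proposal is correct and is essentially the paper's proof: trace ellipticity on each sphere gives $\sigma(r)\ge m'(r)/\kappa_y(r)$, the virial identity with its error bound then yields $m'(r)\le\kappa_y(r)\left(\frac1r+C_0r\right)m(r)$, and Gr\"onwall starting from $m(\eta)=0$ gives $m(R)=0$. Your extra bookkeeping (measurability of $\kappa_y$, its a.e.\ finiteness, and the absolute continuity of $e^{-\int a}m$) spells out points the paper leaves implicit.
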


\begin{proof}
The trace-ellipticity estimate on $\partial B_r(y)$ gives $\sigma(r)\ge m'(r)/\kappa_y(r)$ for almost every $r$.  The virial identity and error estimate therefore yield
\[
m'(r)\le\kappa_y(r)\left(\frac1r+C_0r\right)m(r).
\]
Gronwall's inequality on $[\eta,R]$, using $m(\eta)=0$ and \eqref{eq:radial-integrability}, gives $m(R)=0$.
\end{proof}

In Euclidean space $C_0=0$ and \eqref{eq:virial-identity} is the exact mean-stress identity
\[
\int_{B_R(y)}\tr T\dd x
=R\int_{\partial B_R(y)}T(\nu,\nu)\dd A.
\]

\subsection{Proof of support rigidity}

\begin{proof}[Proof of Theorem~\ref{thm:support-intro}]
Choose a geodesic ball $B_\eta(y)$ contained in the given open zero set.  Take a normal ball $B_{r_0}(y)$ as in Lemma~\ref{lem:virial}, with $\eta<r_0$, and let
\[
\bar\kappa=\operatorname*{ess\,sup}_{B_{r_0}(y)}\kappa<\infty.
\]
Then \eqref{eq:trace-ell-intro} implies the constant trace-ellipticity bound with $\bar\kappa$.  Since $m(\eta)=0$, the nonnegative monotone quantity \eqref{eq:monotone-quantity} vanishes for every $R\in[\eta,r_0)$.  Hence $m(R)=0$, and positivity gives $T=0$ almost everywhere on $B_{r_0}(y)$.

This local propagation can be iterated along any path.  More precisely, join an arbitrary point to the original zero ball by a compact path and cover the path by finitely many normal balls, chosen so that the center of each new ball lies in the zero region already obtained.  At each step a small zero ball around the new center expands to the full normal ball by the preceding argument.  Connectedness therefore gives $T=0$ almost everywhere on all of $M$.
\end{proof}

\begin{remark}[Sharp growth exponent]\label{rem:sharp-exponent}
On $\R^n\setminus\{0\}$, for $a\ge0$, let
\[
T_a=r^a\,\dd r^2+\frac{n-1+a}{n-1}r^a\bigl(g_{\mathrm{Euc}}-\dd r^2\bigr).
\]
A direct radial-divergence calculation gives $\diver T_a=0$.  Moreover
\[
T_a\ge\frac1{n+a}(\tr T_a)g_{\mathrm{Euc}},
\qquad
\int_{B_R}\tr T_a\dd x\asymp R^{n+a}.
\]
Thus the exponent $\bar\kappa$ in Lemma~\ref{lem:virial} is optimal.  If $a=3(n-1)$, then the tensor condition number is exactly $4$; when $n$ is odd, the Cartesian expression
\[
T_a=4r^a g_{\mathrm{Euc}}-3r^{a-2}x\otimes x
\]
is polynomial and extends smoothly across the origin, vanishing only there.
\end{remark}

\begin{remark}[Linear degeneration at the tensor level]\label{rem:linear-tensor}
The local boundedness of the conditioning coefficient cannot be replaced by pointwise finiteness.  On $\R^n$, choose $\gamma>0$ with $(n-1)\gamma>1$ and, for $r>1$, set
\[
a(r)=r^{1-n}\left(\frac{r-1}{r}\right)^{(n-1)\gamma},
\qquad
b(r)=\frac{\gamma}{r-1}a(r),
\]
\[
T=a(r)\,\dd r^2+b(r)\bigl(g_{\mathrm{Euc}}-\dd r^2\bigr),
\]
extended by zero to $r\le1$.  The radial divergence equation
\[
a'+\frac{n-1}{r}(a-b)=0
\]
holds for $r>1$.  Writing $p=(n-1)\gamma>1$, one has $a=O((r-1)^p)$ and $b=O((r-1)^{p-1})$; their first derivatives are locally integrable across $r=1$.  Thus the zero extension lies in $W^{1,1}_{\mathrm{loc}}$ and is distributionally divergence-free.  It is continuous, has an open dead core, and is positive definite for $r>1$, while its condition number satisfies
\[
\operatorname{cond}T\sim\frac{\gamma}{r-1}
\qquad(r\downarrow1).
\]
Thus linear degeneration is already compatible with loss of support rigidity at the tensor level; the integral in \eqref{eq:radial-integrability} diverges logarithmically at the interface.
\end{remark}

\subsection{Positive spectral truncation}

For a self-adjoint endomorphism $A$, write $A_+$ for the endomorphism obtained by replacing each eigenvalue $\lambda$ by $\max\{\lambda,0\}$.

\begin{lemma}[Divergence-free positive part]\label{lem:positive-part}
Let $E\in C^1(\operatorname{Sym}^2T^*M)$ satisfy $\diver E=0$ and suppose that $E$ is positive definite, negative definite, or zero at every point.  Let $A=g^{-1}E$ and define
\[
H(X,Y)=g(A_+X,Y).
\]
Then $H\in W^{1,\infty}_{\mathrm{loc}}$, $H=E$ on $\{E>0\}$, $H=0$ on $\{E\le0\}$, and
\[
\diver H=0
\]
in the sense of distributions.
\end{lemma}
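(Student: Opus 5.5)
The plan is to exploit the trichotomy, which makes the positive spectral truncation almost trivial pointwise. At a point where $E>0$ all eigenvalues of $A=g^{-1}E$ are positive, so $A_+=A$ and $H=E$. At a point where $E<0$ or $E=0$ all eigenvalues are nonpositive, so $A_+=0$ and $H=0$. Hence $H=\mathbf 1_{\cP}E$, where $\cP=\{E>0\}$. This set is open, since positive definiteness is an open condition and $E$ is continuous. The two pointwise identities in the statement are therefore immediate, and what remains is the regularity of $H$ and the distributional equation $\diver H=0$.

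\emph{Regularity.} I would work in a local frame and symmetrize. Put $S=g^{-1/2}Eg^{-1/2}$, where $g^{1/2}$ is the smooth positive square root of the coefficient matrix of $g$. Then $S$ is a $C^1$ family of symmetric matrices, and $A_+=g^{-1/2}S_+g^{1/2}$, so $H$ corresponds to the matrix $g^{1/2}S_+g^{1/2}$. The map $S\mapsto S_+$ is the Frobenius-orthogonal projection onto the closed convex cone of positive semidefinite matrices, so it is $1$-Lipschitz. It follows that $S_+$ is locally Lipschitz. Multiplying by the smooth factors $g^{\pm1/2}$ keeps this, so $H\in W^{1,\infty}_{\mathrm{loc}}$. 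By Rademacher's theorem, the weak derivatives then coincide almost everywhere with the classical derivatives.

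\emph{Divergence.} Since $H\in W^{1,\infty}_{\mathrm{loc}}\subset W^{1,1}_{\mathrm{loc}}$, its distributional divergence is the $L^\infty_{\mathrm{loc}}$ one-form $\diver H$ computed from the a.e.\ derivatives. It therefore suffices to show $\nabla H=0$, or at least $\diver H=0$, almost everywhere. I would split $M$ into two pieces.
\begin{itemize}
\item On the open set $\cP$ we have $H=E$, so $\diver H=\diver E=0$ classically.
\item On the closed complement $M\setminus\cP$ we have $H\equiv0$. Here I would apply the standard fact (Stampacchia) that for a Sobolev function $f$, $\nabla f=0$ almost everywhere on $\{f=0\}$, applied componentwise to $H$ in local coordinates. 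This gives $\nabla H=0$, hence $\diver H=0$, almost everywhere on $M\setminus\cP$.
\end{itemize}
Combining the two pieces, $\diver H=0$ almost everywhere. Integrating by parts against a compactly supported smooth one-form, which is legitimate for $W^{1,1}_{\mathrm{loc}}$ tensors, then gives $\diver H=0$ in the sense of distributions.

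The main obstacle is the interface $\partial\cP$, which may be irregular and even of positive measure. The argument avoids any geometric description of it: the Lipschitz property of the spectral truncation, together with the Stampacchia property on the zero set, absorbs the interface completely. The only point to check carefully is that this zero-set property applies to each component of $H$ in a coordinate chart, which holds because every component vanishes identically on $M\setminus\cP$.
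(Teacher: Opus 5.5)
Your proof is correct and follows the paper's route. Both arguments obtain Lipschitz regularity of $H$ from the nonexpansiveness of the projection onto the positive semidefinite cone, use the Stampacchia property to get vanishing gradient almost everywhere off $\{E>0\}$, and pass from an almost-everywhere divergence to a distributional one for a $W^{1,\infty}_{\mathrm{loc}}$ tensor. The only difference is minor. You apply the Stampacchia property directly to the Lipschitz components of $H$ on the closed set $M\setminus\{E>0\}$. The paper applies it to the $C^1$ components of $E$ on $\{E=0\}$ and then transfers $\nabla H=0$ there via $\abs{A_+(y)}\le\abs{A(y)}=o(d(x,y))$, so your version is marginally shorter.
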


\begin{proof}
Work in a relatively compact coordinate chart and a smooth orthonormal trivialization.  The map from symmetric matrices to their positive parts is the metric projection onto the closed convex cone of positive semidefinite matrices, hence is nonexpansive; see \cite[Chapter 4]{BauschkeCombettes}.  Since $A$ is locally Lipschitz, so is $A_+$, and therefore $H\in W^{1,\infty}_{\mathrm{loc}}$.

Let $Z=\{E=0\}$.  For every scalar $C^1$ component $f$ of $E$, the gradient vanishes almost everywhere on the level set $\{f=0\}$; this is the standard Stampacchia property, see \cite[Section 4.2]{EvansGariepy}.  Since all components vanish on $Z$, one has $\nabla E=0$ almost everywhere on $Z$.  At such a point, $A(y)=o(d(x,y))$, and nonexpansiveness gives $A_+(y)=o(d(x,y))$, so $\nabla H=0$ almost everywhere on $Z$.

On the positive phase, $H=E$ and $\nabla H=\nabla E$; on the negative phase, $H=0$ and $\nabla H=0$.  The weak derivative of a locally Lipschitz tensor agrees with its almost-everywhere derivative.  Consequently
\[
\diver H=\mathbf 1_{\{E>0\}}\diver E=0
\]
almost everywhere and hence distributionally.  No singular measure is produced on $Z$.
\end{proof}

\begin{proof}[Proof of Theorem~\ref{thm:tensor-intro}]
Let $P=\{E>0\}$ and $N=\{E<0\}$.  Assume both are nonempty and let $H$ be the positive part from Lemma~\ref{lem:positive-part}.  On a compact set, suppose the condition number of $E$ is at most $C$.  If $0<\mu_{\min}\le\mu_i\le\mu_{\max}$ are the eigenvalues of $H$ on $P$, then
\[
\tr H\le\mu_{\min}+(n-1)\mu_{\max}
\le\bigl(1+(n-1)C\bigr)\mu_{\min}.
\]
Thus
\[
H\ge\frac1{1+(n-1)C}(\tr H)g,
\]
with a coefficient bounded on compact sets.  The tensor $H$ is positive semidefinite, distributionally divergence-free, and vanishes on the nonempty open set $N$.  Theorem~\ref{thm:support-intro} gives $H\equiv0$, contradicting $H=E>0$ on $P$.  Hence both signs cannot occur.
\end{proof}

\begin{remark}[Why semidefiniteness is insufficient]\label{rem:semidefinite-deadcore}
On a flat torus with coordinates $(x_1,\dots,x_n)$, let $\varphi(x_n)\ge0$ be smooth, nonzero, and identically zero on a nonempty interval.  Then
\[
T=\varphi(x_n)\sum_{i=1}^{n-1}\dd x_i^2
\]
is smooth, positive semidefinite, divergence-free, and has an open dead core.  The trace-ellipticity hypothesis fails because $T$ has a null direction wherever it is nonzero.  This example will explain the Ricci endpoint.
\end{remark}

\section{Sectional curvature}\label{sec:sectional}

\subsection{The Einstein tensor as a complementary curvature sum}

\begin{lemma}[Complementary sectional-curvature identity]\label{lem:Ein-section}
Let $v=e_1$ be a unit vector and extend it to an orthonormal basis $e_1,\dots,e_n$.  Then
\begin{equation}\label{eq:Ein-section}
\Ein(v,v)=-\sum_{2\le i<j\le n}K(e_i\wedge e_j).
\end{equation}
\end{lemma}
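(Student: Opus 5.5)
The plan is to expand both terms of the Einstein tensor in the orthonormal frame $e_1,\dots,e_n$ and count each sectional curvature once. Write $K_{ij}=K(e_i\wedge e_j)$ for $i\ne j$. With the stated sign convention, the Ricci tensor on a unit vector is the sum of the sectional curvatures of the coordinate planes containing it:
\[
\Ric(e_1,e_1)=\sum_{j=2}^nK_{1j}.
\]
Tracing this over the frame, each unordered pair $\{i,j\}$ is counted twice, so
\[
\Scal=\sum_{i}\Ric(e_i,e_i)=2\sum_{1\le i<j\le n}K_{ij}.
\]

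Next I would use $g(v,v)=1$ and split the pair sum according to whether the index $1$ occurs:
\[
\tfrac12\Scal=\sum_{j=2}^nK_{1j}+\sum_{2\le i<j\le n}K_{ij}.
\]
Substituting into \eqref{eq:Ein-def},
\[
\Ein(v,v)=\Ric(e_1,e_1)-\tfrac12\Scal=-\sum_{2\le i<j\le n}K_{ij},
\]
which is \eqref{eq:Ein-section}. The terms involving $e_1$ cancel exactly.

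There is no real obstacle; this is pure bookkeeping. The only points to check are the convention that $\Ric(e_1,e_1)$ is a sum of positive quantities on the round sphere, consistent with the paper's stated convention, and the factor $2$ in the scalar curvature. As a sanity check, on the unit sphere $\Ein(v,v)=(n-1)-\tfrac12 n(n-1)=-\tfrac12(n-1)(n-2)$, which matches $-\binom{n-1}{2}$.
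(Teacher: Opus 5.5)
Your proposal is correct and follows the paper's proof: expand $\Ric(e_1,e_1)$ as $\sum_{j\ge2}K(e_1\wedge e_j)$ and $\tfrac12\Scal$ as the sum of $K(e_i\wedge e_j)$ over all pairs $i<j$, then subtract so that the terms containing $e_1$ cancel. Your round-sphere check, $\Ein(v,v)=-\binom{n-1}{2}$, also confirms the sign convention.
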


\begin{proof}
Since
\[
\Ric(v,v)=\sum_{j=2}^nK(e_1\wedge e_j),
\qquad
\frac12\Scal=\sum_{1\le i<j\le n}K(e_i\wedge e_j),
\]
subtraction gives \eqref{eq:Ein-section}.
\end{proof}

For $n\ge3$ the sum is nonempty.  Hence $-\Ein$ is positive definite at a point of positive sectional curvature, $-\Ein$ is negative definite at a point of negative sectional curvature, and $\Ein=0$ at a flat point.

The next estimate is sharper than the bound obtained by simply comparing every sum with the total number of terms.

\begin{lemma}[Sharp conditioning estimate]\label{lem:section-conditioning}
Assume the sectional sign trichotomy at a nonflat point $p$ and let $\delta=\dsec(p)$.  With the sign chosen so that $T=\mp\Ein$ is positive definite, one has
\begin{equation}\label{eq:Cndelta}
\frac{\lambda_{\max}(T)}{\lambda_{\min}(T)}
\le C_{n,\delta}:=
\frac{2+(n-3)\delta}{(n-1)\delta}.
\end{equation}
Consequently
\begin{equation}\label{eq:section-trace-ell}
T\ge\frac1{\kappa_{n,\delta}}(\tr T)g,
\qquad
\kappa_{n,\delta}=n-2+\frac2\delta.
\end{equation}
Both constants are sharp.
\end{lemma}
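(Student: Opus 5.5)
The estimate is purely algebraic at the point $p$. The plan is to replace $g$ by $-g$-sign conventions as needed so that the sectional curvatures are positive, and normalize $k_{\max}=\max_\sigma K_p(\sigma)=1$, so that $K_p(\sigma)\in[\delta,1]$ for all planes. Then $T=-\Ein$ is positive definite. By Lemma~\ref{lem:Ein-section}, $T(u,u)$ is the sum of the $\binom{n-1}{2}$ sectional curvatures of coordinate planes in $u^\perp$. The crude bound $\binom{n-1}2\delta\le T(u,u)\le\binom{n-1}2$ gives only the ratio $1/\delta$. The sharper bound comes from comparing two \emph{orthogonal} directions, which share most of their terms.

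\textbf{Conditioning bound.} Since $T$ is $g$-symmetric, we can take orthonormal eigenvectors $v,w$ with $T(v,v)=\lambda_{\max}$ and $T(w,w)=\lambda_{\min}$. We complete them to an orthonormal basis $v,w,e_3,\dots,e_n$. Put $S=\sum_{3\le i<j\le n}K(e_i\wedge e_j)$, a sum of $\binom{n-2}2$ terms. Lemma~\ref{lem:Ein-section} then gives
\[
\lambda_{\max}=S+\sum_{j\ge3}K(w\wedge e_j),\qquad
\lambda_{\min}=S+\sum_{j\ge3}K(v\wedge e_j).
\]
Each of these sums has $n-2$ terms lying in $[\delta,1]$. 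Hence
\[
\frac{\lambda_{\max}}{\lambda_{\min}}\le\frac{S+(n-2)}{S+(n-2)\delta}.
\]
The right-hand side is decreasing in $S\ge0$. Inserting $S\ge\binom{n-2}2\delta$ and dividing by $n-2$ yields
\[
\frac{\tfrac{n-3}2\delta+1}{\tfrac{n-3}2\delta+\delta}=C_{n,\delta}.
\]
(For $n=3$ we have $S=0$, and the bound reads $1/\delta$, consistent with the formula.)

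\textbf{Trace ellipticity.} We bound each eigenvalue by $C_{n,\delta}\lambda_{\min}$, so that
\[
\tr T\le\bigl(1+(n-1)C_{n,\delta}\bigr)\lambda_{\min}.
\]
Since $(n-1)C_{n,\delta}=2/\delta+n-3$, this gives $\tr T\le\kappa_{n,\delta}\lambda_{\min}$. Therefore $T\ge\lambda_{\min}g\ge\kappa_{n,\delta}^{-1}(\tr T)g$.

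\textbf{Sharpness.} This is the only step needing care. Equality above requires every plane through $w$ to have curvature $1$ and every other plane seen in the computation to have curvature $\delta$. The candidate model tensor is
\[
R=\tfrac{\delta}{2}\,g\owedge g+(1-\delta)\,h\owedge g,\qquad h=w^\flat\otimes w^\flat,
\]
with $\owedge$ the Kulkarni--Nomizu product. For an orthonormal pair $X,Y$ this gives
\[
K(X\wedge Y)=\delta+(1-\delta)\bigl(\inner Xw^2+\inner Yw^2\bigr).
\]
The bracket lies in $[0,1]$, being the squared norm of the projection of $w$ onto the plane. So $K\in[\delta,1]$ with both values attained, and $\dsec=\delta$. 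A direct computation shows that $T$ has eigenvalue $\lambda_{\max}=\binom{n-2}2\delta+(n-2)$ on $w^\perp$ and $\lambda_{\min}=\binom{n-1}2\delta$ on $w$. Hence both the ratio $C_{n,\delta}$ and the trace constant $\kappa_{n,\delta}$ (with $n-1$ equal eigenvalues $\lambda_{\max}$) are attained.

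Since $R$ is an algebraic curvature tensor, it is realized as the curvature at a point of some metric, for instance via a metric with prescribed $2$-jet in normal coordinates. So the constants cannot be improved even for genuine metrics. If one wants realization within the trichotomy class on an open set, a rotationally symmetric or warped model of this type should work, but for the pointwise sharpness claim the algebraic realization suffices.
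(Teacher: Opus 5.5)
Your argument is correct and is essentially the paper's proof: you compare the largest and smallest eigenvalues of $T=-\Ein$ through the shared sum over the $\binom{n-2}{2}$ planes orthogonal to both eigenvectors, bound the unshared $n-2$ terms by $1$ and $\delta$, and get the trace constant from $\tr T\le(1+(n-1)C_{n,\delta})\lambda_{\min}$. Your Kulkarni--Nomizu sharpness model is the same curvature configuration as the paper's warped-product example at a point (curvature $1$ on planes through $w$, curvature $\delta$ on planes orthogonal to $w$); the only difference is that you realize it as a general algebraic curvature tensor rather than through the $2$-jet of a warping function.
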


\begin{proof}
It suffices to work at a positive point and put $T=-\Ein$.  Let
\[
\Lambda=\max_\sigma K(\sigma),
\qquad
\lambda=\min_\sigma K(\sigma)=\delta\Lambda.
\]
If $T$ is a multiple of the metric there is nothing to prove.  Otherwise choose orthonormal eigenvectors $v=e_1$ and $w=e_2$ for the smallest and largest eigenvalues of $T$, and complete them to an orthonormal basis.  Put
\[
A=\sum_{3\le i<j\le n}K(e_i\wedge e_j).
\]
By Lemma~\ref{lem:Ein-section},
\[
T(v,v)=\sum_{j=3}^nK(e_2\wedge e_j)+A,
\qquad
T(w,w)=\sum_{j=3}^nK(e_1\wedge e_j)+A.
\]
The unshared sums have $n-2$ terms, while $A$ has $\binom{n-2}{2}$ terms.  Therefore
\begin{align*}
\frac{\lambda_{\max}(T)}{\lambda_{\min}(T)}
&\le
\frac{(n-2)\Lambda+\binom{n-2}{2}\delta\Lambda}
{(n-2)\delta\Lambda+\binom{n-2}{2}\delta\Lambda}\\
&=\frac{2+(n-3)\delta}{(n-1)\delta}.
\end{align*}
If $C=C_{n,\delta}$, then
\[
\tr T\le\lambda_{\min}+(n-1)\lambda_{\max}
\le\bigl(1+(n-1)C\bigr)\lambda_{\min},
\]
and $1+(n-1)C=n-2+2/\delta$, proving \eqref{eq:section-trace-ell}.

The constants are pointwise sharp for every $\delta\in(0,1]$.  Indeed, put $q=\delta^{-1}$ and consider a singly warped metric at a point where the radial sectional curvatures equal $qc$ and the tangential sectional curvatures equal $c>0$.  Such a curvature configuration is realized by prescribing the $2$-jet of a positive warping function.  The complementary-sum eigenvalues then have ratio
\[
\frac{2q+n-3}{n-1}=C_{n,\delta},
\]
and equality also holds in \eqref{eq:section-trace-ell}.  Proposition~\ref{prop:pinched-spheres} globalizes this equality configuration to a closed sphere whenever $q$ is an integer at least $2$.
\end{proof}

\subsection{Proof of sectional sign rigidity}

\begin{proof}[Proof of Theorem~\ref{thm:sectional-intro}]
Set $E=-\Ein$.  By \eqref{eq:Bianchi}, $E$ is smooth and divergence-free.  Lemma~\ref{lem:Ein-section} shows that $E$ is positive definite on $\cP_{\mathrm{sec}}$, negative definite on $\cN_{\mathrm{sec}}$, and zero on $\cZ_{\mathrm{sec}}$.

Assume first only that the pinching is locally uniform on $\cP_{\mathrm{sec}}$, and let $H$ be the positive spectral part of $E$.  Lemma~\ref{lem:positive-part} makes $H$ distributionally divergence-free.  Lemma~\ref{lem:section-conditioning} gives trace ellipticity with a coefficient bounded on compact sets, and $H$ vanishes on $\cN_{\mathrm{sec}}\cup\cZ_{\mathrm{sec}}$.  If $\cP_{\mathrm{sec}}\ne\varnothing$, then neither $\cN_{\mathrm{sec}}$ nor the interior of $\cZ_{\mathrm{sec}}$ can be nonempty, by Theorem~\ref{thm:support-intro}.  This proves the one-sided statement.  Applying the same argument to $-E$ proves its negative counterpart.  Under two-sided local uniform pinching, the three alternatives in the theorem follow immediately.
\end{proof}

\subsection{A hypersurface exclusion lemma}

We isolate the local flux argument needed for Theorem~\ref{thm:flat-intro}(i).

\begin{proposition}[No sufficiently flat zero hypersurface]\label{prop:no-graph}
Let $T\in C^1(\operatorname{Sym}^2T^*M)$ be nonzero, positive semidefinite, and divergence-free.  Suppose that wherever $T\ne0$ its eigenvalues satisfy
\begin{equation}\label{eq:cond-T}
\lambda_{\max}(T)\le C\lambda_{\min}(T)
\end{equation}
for one finite constant $C$.  Then, in a sufficiently small coordinate neighborhood, the zero set of $T$ contains no Lipschitz hypersurface graph with sufficiently small Lipschitz constant.  In particular it contains no $C^1$ immersed hypersurface piece.
\end{proposition}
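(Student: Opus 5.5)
The plan is to reduce the statement to the support theorem, Theorem~\ref{thm:support-intro}, by a \emph{one-sided zero extension} across the putative hypersurface. Suppose, for contradiction, that in a small coordinate neighborhood $U$ the zero set of $T$ contains a Lipschitz graph $\Gamma=\{x_n=f(x')\}$. Choose $U$ to be a small normal ball (or a coordinate cylinder inside one) so that Lemma~\ref{lem:hess-distance} and Lemma~\ref{lem:virial} apply uniformly. The smallness of the Lipschitz constant of $f$ is used only for the following: $\Gamma$ splits a neighborhood of one of its points into two connected Lipschitz domains $U^\pm=\{\pm(x_n-f(x'))>0\}$. Moreover, there are normal balls centered at points of $U^-$ that contain a small ball lying entirely in $U^-$ and yet reach across $\Gamma$ into $U^+$. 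When the graph is nearly horizontal in normal coordinates, this geometry holds with constants depending only on the chart and on $C$. That dependence is what produces the threshold in the statement.

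Define $T'=T$ on $\overline{U^+}$ and $T'=0$ on $U^-$. Since $T\in C^1$ and $T=0$ on $\Gamma$, the tensor $T'$ is continuous and piecewise $C^1$, hence Lipschitz. The key step is that $T'$ is still distributionally divergence-free, and in particular satisfies the Hessian-test identity. Fix $\phi\in C_c^\infty(U)$ and apply the divergence theorem on the Lipschitz domain $U^+$:
\[
\int_{U}\inner{T'}{\Hess\phi}\dd V
=\int_{U^+}\inner{T}{\nabla(\dd\phi)}\dd V
=-\int_{U^+}(\diver T)(\nabla\phi)\dd V+\int_{\Gamma}T(\nu,\nabla\phi)\dd A .
\]
The first term on the right vanishes because $\diver T=0$, and the boundary term vanishes because $T=0$ pointwise on $\Gamma$. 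So no singular measure is created on the interface; this is the same mechanism as in Lemma~\ref{lem:positive-part}.

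Next check the remaining hypotheses of the support theorem. $T'\ge0$, and wherever $T'\neq0$ it coincides with $T$, so \eqref{eq:cond-T} gives $T'\ge(1+(n-1)C)^{-1}(\tr T')g$, exactly as in the proof of Theorem~\ref{thm:tensor-intro}. Since $T'$ vanishes on the open set $U^-$, Lemma~\ref{lem:virial} applies on a normal ball centered in $U^-$ that crosses $\Gamma$: the monotone quantity \eqref{eq:monotone-quantity} vanishes at small radii, hence $m\equiv0$ on the whole ball. Therefore $T=0$ on an open subset of $U^+$, and $T$ itself vanishes on a nonempty open set.

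Finally, apply Theorem~\ref{thm:support-intro} to $T$ on the connected manifold $M$, using the global conditioning bound \eqref{eq:cond-T}. This gives $T\equiv0$, contradicting the assumption that $T$ is nonzero. The $C^1$ case follows because a $C^1$ immersed hypersurface piece is, after shrinking and rotating normal coordinates, a graph with arbitrarily small Lipschitz constant. I expect the main obstacle to be the justification of the divergence theorem and of the boundary term on a merely Lipschitz interface. I would handle it by approximating $\mathbf 1_{U^+}$ by mollified cutoffs $\chi_\eps$ and using $\abs{T}\le C'\dist(\cdot,\Gamma)$ together with $\abs{\nabla\chi_\eps}\lesssim\eps^{-1}$ on an $\eps$-collar of volume $O(\eps)$. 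The error term is then $O(\eps)$.
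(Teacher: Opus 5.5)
Your argument is correct, and it takes a different route from the paper's. The paper never extends $T$ by zero. It studies the flux of the row $W^i=\sqrt{\det g}\,T^{i1}$ through the translated graphs $x^1=\gamma(x')+t$, capped by a cone. Bounded conditioning gives the cone condition $\abs{W'}\le c_*W^1$, and the hypothesis $\operatorname{Lip}\gamma<c_*^{-1}$ is exactly what makes the flux through the top of each slab nonnegative. Gronwall in $t$ then forces $T=0$ in a thin layer above $\Gamma$, and Theorem~\ref{thm:support-intro} enters only at the end.

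You instead observe that $T\mathbf 1_{U^+}$ is locally Lipschitz and still distributionally divergence-free because $T|_\Gamma=0$, which is the mechanism of Lemma~\ref{lem:positive-part}. You then let the virial monotonicity itself carry the vanishing across $\Gamma$. This is shorter, avoids the cone and flux bookkeeping, and proves more than the statement claims.

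Contrary to your first paragraph, your argument never uses the smallness of the Lipschitz constant. A Lipschitz graph with any constant over a small ball splits a coordinate cylinder $U$ into two nonempty open pieces. You can then apply Theorem~\ref{thm:support-intro} directly to $T\mathbf 1_{U^+}$ on the connected set $U$, with no special choice of crossing balls and no dependence of the geometry on $C$. The same reasoning shows that $U\setminus\{T=0\}$ is connected for every connected open $U$ whenever $T\ne0$. This strengthens Theorem~\ref{thm:flat-intro}(i) to Lipschitz hypersurface pieces of arbitrary Lipschitz constant.

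What the paper's route offers is an explicit first-order propagation estimate in the normal direction. The Lipschitz threshold in the statement is an artifact of that method, not of the problem.
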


\begin{proof}
Suppose a graph piece lies in the zero set.  Choose geodesic normal coordinates $x=(x^1,x')$ centered at a point of the graph, with the $x^1$-axis normal to its tangent hyperplane.  After shrinking the chart, write the graph as
\[
\Gamma=\{x^1=\gamma(x')\},
\qquad \gamma(0)=0,
\]
with arbitrarily small $\operatorname{Lip}\gamma$ in the $C^1$ case.

Raise both indices of $T$ and set
\[
W^i=\sqrt{\det g}\,T^{i1}.
\]
The metric is uniformly close to the Euclidean metric.  Thus \eqref{eq:cond-T} implies, after changing the constant, that the Euclidean eigenvalues $\mu_1\le\cdots\le\mu_n$ of the matrix $(T^{ij})$ satisfy $\mu_n\le C_*\mu_1$.  Positivity gives
\[
\sum_j(T^{1j})^2=(T^2)_{11}\le\mu_nT^{11},
\qquad T^{11}\ge\mu_1,
\]
so there is $c_*>0$ such that
\begin{equation}\label{eq:cone-row}
W^1\ge0,
\qquad
\abs{W'}\le c_*W^1.
\end{equation}
The divergence equation yields
\[
\partial_iW^i=-\sqrt{\det g}\,\Gamma^1_{ik}T^{ik}.
\]
Since $\Gamma=O(\abs{x})$ in normal coordinates and the entries of a positive matrix with bounded condition number are controlled by $T^{11}$, shrinking once more gives
\begin{equation}\label{eq:divW}
\abs{\partial_iW^i}\le A W^1
\end{equation}
for some finite $A$.

Choose $L<c_*^{-1}$ and assume $\operatorname{Lip}\gamma\le L$.  Let $\psi(x')$ be a downward cone with $\abs{\nabla\psi}\le c_*^{-1}$, positive above $\gamma$ near the origin and meeting it before the boundary of the coordinate disk.  Put
\[
U=\{x':\gamma(x')<\psi(x')\},
\qquad
F=\psi-\gamma,
\]
and for $t>0$ define
\[
\theta_t=\min\{\gamma+t,\psi\},
\qquad
\Omega_t=\{(x^1,x'):x'\in U,\ \gamma(x')<x^1<\theta_t(x')\}.
\]
The base flux vanishes because $T=0$ on $\Gamma$.  On the top graph the outward conormal is a positive multiple of $(1,-\nabla\theta_t)$.  Where $F>t$, \eqref{eq:cone-row} gives
\[
W^1-\nabla\gamma\cdot W'
\ge(1-c_*L)W^1,
\]
while on the conical cap $\theta_t=\psi$ the flux is nonnegative because $\abs{\nabla\psi}\le c_*^{-1}$.  The Gauss--Green formula for Lipschitz domains, as in \cite[Chapter 5]{EvansGariepy}, therefore implies
\[
(1-c_*L)\varphi(t)
\le\int_{\Omega_t}\diver_{\R^n}W\dd x
\le A\int_{\Omega_t}W^1\dd x,
\]
where
\[
\varphi(t)=\int_{\{F>t\}}W^1(\gamma(x')+t,x')\dd x'.
\]
Tonelli's theorem gives
\[
\int_{\Omega_t}W^1\dd x=\int_0^t\varphi(s)\dd s.
\]
Hence, with $\Phi(t)=\int_0^t\varphi(s)\dd s$,
\[
\Phi'(t)\le\frac{A}{1-c_*L}\Phi(t)
\quad\text{a.e.},
\qquad
\Phi(0)=0.
\]
Gronwall's inequality gives $\Phi\equiv0$.  Thus $W^1=0$ on a nonempty open subset.  At a nonzero point, \eqref{eq:cond-T} would force $T^{11}\ge\mu_1>0$, so $T=0$ on that open subset.  Theorem~\ref{thm:support-intro} then gives $T\equiv0$, contrary to hypothesis.
\end{proof}

\subsection{Growth from the flat set and porosity}

\begin{proposition}[Nondegeneracy at the zero set]\label{prop:nondegrowth}
Let $T$ be a nonzero smooth positive semidefinite divergence-free tensor on a connected manifold, and suppose
\[
T\ge\frac1\kappa(\tr T)g
\]
with one constant $\kappa\ge n$.  Then for every compact set $K\subset M$ there are $c>0$ and $R_0>0$ such that
\begin{equation}\label{eq:growth-T}
\int_{B_R(y)}\tr T\dd V\ge cR^\kappa
\end{equation}
for every $y\in K$ and $0<R<R_0$.
\end{proposition}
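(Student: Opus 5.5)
The plan is to combine the monotonicity formula of Lemma~\ref{lem:virial} with the support rigidity of Theorem~\ref{thm:support-intro}, plus a compactness argument that yields uniform constants over $K$.

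First, by Lemma~\ref{lem:hess-distance} and Lemma~\ref{lem:virial}, I choose $r_0>0$ and $C_0$ uniformly for $y$ in a relatively compact neighborhood $K'$ of $K$. Then for every such $y$, with $m_y(R)=\int_{B_R(y)}\tr T\dd V$, the quantity $R^{-\kappa}e^{-\kappa C_0R^2/2}m_y(R)$ is nonincreasing on $(0,r_0)$. Consequently, for $0<R<R_0:=r_0/2$,
\[
m_y(R)\ge R^{\kappa}e^{\kappa C_0R^2/2}R_0^{-\kappa}e^{-\kappa C_0R_0^2/2}\,m_y(R_0)\ge c_1R^\kappa\,m_y(R_0),
\]
with $c_1>0$ depending only on $\kappa,C_0,r_0$. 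So it suffices to show that $\inf_{y\in K}m_y(R_0)>0$.

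Second, I bound $m_y(R_0)$ from below. The map $y\mapsto m_y(R_0)$ is continuous, because $\tr T$ is locally integrable and the volume of the symmetric difference of the balls $B_{R_0}(y)\,\triangle\, B_{R_0}(y')$ tends to zero as $y'\to y$. By compactness of $K$, it therefore suffices to show $m_y(R_0)>0$ for each $y$. If instead $m_y(R_0)=0$ for some $y$, then $T=0$ almost everywhere on the nonempty open ball $B_{R_0}(y)$, since $T\ge0$. Theorem~\ref{thm:support-intro} (with constant $\kappa$) then gives $T\equiv0$ on the connected manifold, which contradicts the hypothesis that $T$ is nonzero. Setting $c=c_1\min_K m_y(R_0)$ completes the proof.

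The main obstacle is the uniformity in $y$. Concretely, one must check that the normal-ball radius and the Hessian error constant $C_0$ can be chosen uniformly over $K$, as stated in Lemma~\ref{lem:hess-distance}. One must also check that Lemma~\ref{lem:virial} only needs $T\in L^1$ on the ball, which holds here by smoothness. The continuity of $m_y(R_0)$ in $y$ is routine.
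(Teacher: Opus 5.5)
Your proposal is correct and takes the same route as the paper. You choose the virial radius uniformly near $K$, use Theorem~\ref{thm:support-intro} to get $m_y(R_0)>0$ for each $y$, upgrade this to a uniform lower bound by continuity in $y$ and compactness, and then apply the monotonicity of Lemma~\ref{lem:virial} from $R$ up to $R_0$. Your version also writes out the constant $c_1=R_0^{-\kappa}e^{-\kappa C_0R_0^2/2}$ and the continuity of $y\mapsto m_y(R_0)$, which the paper leaves implicit.
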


\begin{proof}
Choose $R_0$ uniformly so that Lemma~\ref{lem:virial} applies to balls centered in a neighborhood of $K$.  Since $T$ has no open zero set by Theorem~\ref{thm:support-intro},
\[
m_y(R_0)=\int_{B_{R_0}(y)}\tr T\dd V>0
\]
for every $y\in K$.  Continuity in $y$ and compactness give a uniform positive lower bound.  The monotonicity formula, used from $R$ up to $R_0$, then yields \eqref{eq:growth-T}.
\end{proof}

\begin{proof}[Proof of Theorem~\ref{thm:flat-intro}]
By Theorem~\ref{thm:sectional-intro}, one sectional sign occurs.  Put $T=-\Ein$ in the positive case and $T=\Ein$ in the negative case, so that $T\ge0$ globally.  Lemma~\ref{lem:section-conditioning} gives
\begin{equation}\label{eq:kappa-flat}
T\ge\frac1\kappa(\tr T)g,
\qquad
\kappa=n-2+\frac2{\delta_0}.
\end{equation}
Part (i) follows from Proposition~\ref{prop:no-graph}.

For parts (ii) and (iii), note that
\[
\tr T=\frac{n-2}{2}\Scal
\]
in the positive case; in the negative case use $-\Scal$.  Thus there is a smooth nonnegative function $s$ with $\tr T=(n-2)s/2$ and zero set $\cZ_{\mathrm{sec}}$.  Proposition~\ref{prop:nondegrowth}, together with the local upper volume bound, gives
\begin{equation}\label{eq:scalar-lower}
\sup_{B_R(y)}s\ge cR^\beta,
\qquad
\beta=\kappa-n=\frac{2(1-\delta_0)}{\delta_0},
\end{equation}
for $y$ in a compact subset of $\cZ_{\mathrm{sec}}$ and small $R$.

Because $s\ge0$ is smooth and vanishes on $\cZ_{\mathrm{sec}}$, its gradient also vanishes there.  A Taylor estimate on compact sets gives
\begin{equation}\label{eq:scalar-distance}
s(x)\le C\dist(x,\cZ_{\mathrm{sec}})^2.
\end{equation}
If $\delta_0>1/2$, then $\beta<2$, and \eqref{eq:scalar-lower} contradicts \eqref{eq:scalar-distance} as $R\downarrow0$.  Hence the flat set is empty.

If $\delta_0=1/2$, then $\beta=2$.  Apply \eqref{eq:scalar-lower} at scale $R/2$ and combine it with \eqref{eq:scalar-distance}.  One finds a point $x\in B_{R/2}(y)$ with
\[
\dist(x,\cZ_{\mathrm{sec}})\ge\sigma R
\]
for a locally uniform $\sigma>0$.  Therefore a ball of radius comparable to $R$ inside $B_R(y)$ misses $\cZ_{\mathrm{sec}}$.  This is local porosity.  Uniformly porous subsets of Euclidean space have upper Minkowski dimension strictly below the ambient dimension \cite{Salli}; applying this in finitely many coordinate charts gives the asserted measure and dimension conclusions.
\end{proof}

\section{Sharpness and examples for sectional curvature}\label{sec:sectional-sharpness}

\subsection{A sign-changing conformally flat metric with degenerating pinching}

For $g=e^{2u}g_{\mathrm{Euc}}$, the Schouten tensor relative to the Euclidean background is
\begin{equation}\label{eq:Schouten-conf}
S=-D^2u+\dd u\otimes\dd u-\frac12\abs{Du}^2g_{\mathrm{Euc}}.
\end{equation}
Up to the common positive conformal factor, sectional curvatures are sums of two eigenvalues of $S$; this is the standard conformal curvature formula, see \cite[Chapter 1]{Besse}.

\begin{proposition}[Degenerate-pinching sign change]\label{prop:conformal-signchange}
For every $n\ge3$ there is a connected smooth conformally flat metric on a slab in $\R^n$ satisfying the sectional sign trichotomy, with negative sectional curvature on one side of a flat hypersurface and positive sectional curvature on the other.  If $d$ denotes distance to the hypersurface, then
\begin{equation}\label{eq:degenerate-rates}
\dsec=e^{-1/d+o(1)}
\quad\text{on the negative side},
\qquad
\dsec=(4+o(1))d^4
\quad\text{on the positive side}.
\end{equation}
The asymptotics are uniform in the transverse variable on a fixed smaller slab.
\end{proposition}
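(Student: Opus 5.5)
The plan is to look for the metric in the conformally flat class $g=e^{2u}g_{\mathrm{Euc}}$ and read off the sectional curvatures from \eqref{eq:Schouten-conf}. Up to the positive factor $e^{-2u}$, each sectional curvature is a sum of two eigenvalues of $S$, so the whole problem reduces to choosing $u$ so that these pairwise sums have one sign on each side of an interface and vanish identically on it. A first observation rules out the most naive ansatz. If $u=u(x_n)$ depends on a single Cartesian variable, the tangential--tangential curvatures are $-(u')^2\le0$, so positive curvature is impossible. I will therefore let the conformal factor depend on the distance to a \emph{curved} hypersurface. The simplest choice is $u=u(r)$ with $r=\abs{x}$, the interface being the sphere $\{r=1\}$ and the slab a thin shell around it, which I then flatten into a slab-shaped coordinate region. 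If the final rates force it, I will replace $r$ by a more general profile.

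For radial $u$, set $w=-u'/r$. A direct computation from \eqref{eq:Schouten-conf} gives two families of curvatures, up to $e^{-2u}$:
\[
K_{\mathrm{rad}}=2w+rw',
\qquad
K_{\mathrm{tan}}=w\,(2-r^2w).
\]
Here $K_{\mathrm{rad}}$ is the curvature of planes containing $\partial_r$, and $K_{\mathrm{tan}}$ that of planes tangent to the spheres. Because $S$ is diagonal in the splitting $\R\partial_r\oplus\partial_r^\perp$ with one repeated eigenvalue, every sectional curvature is a convex combination of these two values. Hence $\dsec=\min(\abs{K_{\mathrm{rad}}},\abs{K_{\mathrm{tan}}})/\max(\cdot)$ whenever the two have the same sign. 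The round-sphere factor $w=4/(1+r^2)$ is a sanity check: it gives $K_{\mathrm{rad}}=K_{\mathrm{tan}}>0$. The design problem becomes a one-variable ODE question. I need a smooth $w$ on $(1-\eps,1+\eps)$, flat to infinite order at $r=1$, such that $K_{\mathrm{rad}}$ and $K_{\mathrm{tan}}$ are both positive on one side and both negative on the other. The trichotomy and the flatness of the interface then follow automatically, since all derivatives of $w$ vanish at $r=1$.

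Write $d=\abs{r-1}$. On each side I will take $w$ of the form $\pm e^{-1/d}\,h(d)$ with an auxiliary profile $h$. Then $rw'\approx \pm r\,w/d^2$ dominates $2w$, so $K_{\mathrm{rad}}\approx rw'$ and the ratio $K_{\mathrm{tan}}/K_{\mathrm{rad}}$ is governed by $d^2$ times quantities built from $h$. On the side where $w'$ has the wrong sign relative to $w$, I will instead enforce the sign condition by making $2w+rw'$ and $w$ agree in sign through a nested choice. One example is $w$ whose logarithmic derivative is bounded there, with $w$ itself carrying the flat factor. The intended output is the asymmetric pair of rates in \eqref{eq:degenerate-rates}: $e^{-1/d+o(1)}$ on the negative side, and $(4+o(1))d^4$ on the positive side. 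The latter suggests a positive side on which the ratio is a square of a $2d^2$-type quantity, arising from a second-order cancellation between $2w$ and $-r^2w^2$ against $rw'$. Once a profile is fixed, the computation of the asymptotics is routine expansion in $d$. Uniformity in the transverse variable on a smaller slab is automatic because everything depends only on $r$.

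The main obstacle is exactly this profile design. The two curvatures $K_{\mathrm{rad}}$ and $K_{\mathrm{tan}}$ involve $w'$ and $w$ with different weights, so a flat function such as $e^{-1/d}$ typically makes $K_{\mathrm{rad}}$ change sign on one side. Tuning $h$ (or passing to a conformal factor depending on a non-radial level function of the interface) so that both signs are correct, and so that the pinching rates come out precisely as stated, is the delicate step. My first attempt would be to prescribe $K_{\mathrm{rad}}$ directly as a flat function with the desired sign pattern. I would then integrate the linear ODE $(r^2w)'=rK_{\mathrm{rad}}$ for $w$, check the sign of $K_{\mathrm{tan}}=w(2-r^2w)$ a posteriori, and read off $\dsec$.
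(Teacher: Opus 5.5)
There is a genuine gap. The proposal contains no construction: your fallback of a non-radial level function is left unspecified, and that is where the content of the proof would lie. More seriously, the radial ansatz you commit to cannot produce a sign change at all. The ``profile design'' step you defer is therefore impossible, not merely delicate.

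Put $W=r^2w$. Your (correct) formulas read $K_{\mathrm{rad}}=W'/r$ and $K_{\mathrm{tan}}=W(2-W)/r^2$. Near a sphere on which $w$ is flat, $W$ is small, so $K_{\mathrm{tan}}$ has the sign of $W$. The trichotomy then forces:
\begin{itemize}
\item $W'$ has the strict sign of $W$ wherever $W\ne0$;
\item $W'=0$ wherever $W=0$, since otherwise $K_{\mathrm{tan}}=0\ne K_{\mathrm{rad}}$.
\end{itemize}
Hence $(W^2)'=2WW'\ge0$ on the whole shell, and $W(1)=0$ gives $W\equiv0$ for $r\le1$ near the interface: the inner side is flat. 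The other flat branch $W\approx2$ is the inverted flat metric $r^{-4}g_{\mathrm{Euc}}$; it gives the mirror statement, with the outer side flat.

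Intrinsically, for $\dd s^2+f^2g_{\Sph^{n-1}}$ one has $\bigl(1-(f')^2\bigr)'=2ff'K_{\mathrm{rad}}$, with $f'\approx\pm1$ at a flat sphere. So $\bigl(1-(f')^2\bigr)^2$ is monotone there, and a rotationally symmetric metric satisfying the trichotomy is flat on one side of every flat sphere. Flattening the shell into a slab does not change this.

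Your planned first attempt exhibits the obstruction concretely. For $r<1$, $W(r)=-\int_r^1 sK_{\mathrm{rad}}(s)\dd s$ has the sign opposite to $K_{\mathrm{rad}}$. So the a posteriori check of $K_{\mathrm{tan}}$ fails for every prescribed $K_{\mathrm{rad}}$.

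The rate heuristic is also unsupported. On the admissible side, a profile $w=\pm e^{-1/d}h(d)$ with tame $h$ gives $K_{\mathrm{tan}}/K_{\mathrm{rad}}\approx 2d^2$, so $\dsec\approx2d^2$, not $4d^4$. (Minor: the round metric corresponds to $w=2/(1+r^2)$, not $4/(1+r^2)$.)

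The missing idea is that the two sides must come from different mechanisms, and your own first observation is the clue. The paper works on a Cartesian slab $x=(t,z)\in\R\times\R^{n-1}$ with $u=F(t)-\eps H(t)e^{\abs z^2}$. Here $F=e^{1/t}$ for $t<0$ and $H=e^{-1/t}$ for $t>0$, both extended by zero.

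\emph{Negative side.} For $t<0$ the factor depends on $t$ alone. The term $-(F')^2$ you identified is then not an obstruction but the source of negativity. Up to $e^{-2u}$, the two curvature types are $-F''$ and $-(F')^2$, both negative for $-1/2<t<0$. Their ratio is $(F')^2/F''=e^{1/t}/(1+2t)$.

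\emph{Positive side.} For $t>0$ put $q=\eps e^{-1/t}e^{\abs z^2}$ and $u=-q$. The transverse dependence makes $-D^2u$ contribute a positive transverse block $q(2I+4z\otimes z)$. This block dominates the gradient terms, which are of size $O(q^2/t^4)=o(q)$, while the $tt$-entry is $\sim q/t^4$. A Schur-complement and block-perturbation argument gives the Schouten tensor one eigenvalue $\sim q/t^4$ and $n-1$ eigenvalues $\sim2q$. The extreme curvatures are therefore $\sim q/t^4$ and $\sim4q$, so $\dsec\sim4t^4$.

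Both pieces are flat to infinite order at $t=0$. The distance to $\{t=0\}$ agrees with $\abs t$ up to negligible errors. The asymmetry of the two rates is a direct trace of this asymmetric construction; neither a radial profile nor a function of $t$ alone can produce it.
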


\begin{proof}
Write $x=(t,z)\in\R\times\R^{n-1}$ and define the one-sided flat functions
\[
F(t)=\begin{cases}e^{1/t},&t<0,\\0,&t\ge0,\end{cases}
\qquad
H(t)=\begin{cases}e^{-1/t},&t>0,\\0,&t\le0.\end{cases}
\]
For small fixed $\eps>0$, set
\begin{equation}\label{eq:u-conformal}
u(t,z)=F(t)-\eps H(t)e^{\abs z^2},
\qquad
g=e^{2u}g_{\mathrm{Euc}}.
\end{equation}
We restrict to $-1/2<t<t_0$ and $\abs z<r_0$, with $t_0,r_0$ sufficiently small.

For $t<0$, $u=F(t)$ depends only on $t$.  The two types of eigenvalue sums in \eqref{eq:Schouten-conf} are
\[
-F''(t)
\quad\text{and}\quad
-(F'(t))^2.
\]
Here
\[
F'(t)=-\frac{e^{1/t}}{t^2},
\qquad
F''(t)=\frac{e^{1/t}(1+2t)}{t^4}>0
\quad(-1/2<t<0).
\]
Thus all sectional curvatures are negative.  Near $t=0$ the smaller magnitude is $(F')^2$ and the larger is $F''$, so
\[
\dsec(t,z)=\frac{(F')^2}{F''}
=\frac{e^{1/t}}{1+2t}
=e^{-1/\abs t+o(1)}.
\]

For $t>0$, put
\[
q=\eps e^{-1/t}e^{\abs z^2},
\qquad u=-q.
\]
Relative to the splitting $\R\partial_t\oplus\R^{n-1}$, the matrix of $S$ is
\[
S=\begin{pmatrix}a&b^T\\ b&C\end{pmatrix},
\]
where, with $s=\abs z^2$,
\begin{align*}
a&=\frac{q(1-2t)}{t^4}+\frac{q^2}{2t^4}-2q^2s,\\
b&=\frac{2q(1+q)}{t^2}z,\\
C&=\left(2q-\frac{q^2}{2t^4}-2q^2s\right)I
  +4q(1+q)z\otimes z.
\end{align*}
Because $q/t^4\to0$ as $t\downarrow0$, uniformly for $\abs z\le r_0$, one has
\[
\frac Cq\longrightarrow 2I+4z\otimes z,
\qquad
\frac{t^4a}{q}\longrightarrow1,
\qquad
\frac{t^2b}{q}\longrightarrow2z.
\]
The convergence is uniform for $\abs z\le r_0$, and therefore
\[
\frac{t^4}{q}\bigl(a-b^TC^{-1}b\bigr)
\longrightarrow
1-4z^T(2I+4z\otimes z)^{-1}z
=\frac1{1+2\abs z^2}>0.
\]
After decreasing $t_0$, the matrix $C$ and the Schur complement are positive.  Thus $S$ is positive definite and all sectional curvatures are positive.  Moreover,
\[
a^{-1}bb^T=4q\,z\otimes z+o(q),
\qquad
C-a^{-1}bb^T=2qI+o(q)
\]
uniformly for $\abs z\le r_0$.  Standard block perturbation therefore shows that the eigenvalues of $S$ consist of one eigenvalue
\[
\frac q{t^4}(1+o(1))
\]
and $n-1$ eigenvalues $2q(1+o(1))$, uniformly in $z$.  Hence the smallest sectional curvature is the sum of two transverse eigenvalues, $(4+o(1))q$, while the largest is $(1+o(1))q/t^4$.  This gives the second asymptotic in terms of the coordinate $t$.

All derivatives of $u$ vanish at $t=0$, so the metric is flat there and glues smoothly.  Moreover $u=o(1)$ uniformly in the transverse variable, and comparison with the Euclidean metric gives $d=\abs t(1+o(1))$ for the distance to the interface.  The two estimates above therefore give \eqref{eq:degenerate-rates} in terms of $d$.

\end{proof}

\subsection{Exactly pinched spheres with flat poles}

The standard warped-product curvature formulas used below go back to Bishop and O'Neill \cite{BishopONeill}.

\begin{proposition}[Exactly $1/q$-pinched spheres]\label{prop:pinched-spheres}
Let $n\ge3$, let $q\ge2$ be an integer, and let $C>0$.  There is a smooth rotationally symmetric metric on $\Sph^n$ that is positively curved away from two poles, flat at the poles, and exactly $1/q$-pinched at every nonflat point.
\end{proposition}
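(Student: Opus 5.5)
The plan is to take the rotationally symmetric ansatz $g=\dd t^2+f(t)^2g_{\Sph^{n-1}}$ on $[0,2L]\times\Sph^{n-1}$ and to reduce exact pointwise $1/q$-pinching to a first-order ODE for $f$. By the Bishop--O'Neill formulas \cite{BishopONeill}, the radial sectional curvature is $K_{\mathrm{rad}}=-f''/f$ and the tangential one is $K_{\mathrm{tan}}=(1-f'^2)/f^2$. The curvature operator is diagonal in the basis of radial-tangential and tangential-tangential $2$-vectors. Hence every sectional curvature at a point is a convex combination of $K_{\mathrm{rad}}$ and $K_{\mathrm{tan}}$, and $n\ge3$ ensures both types of planes exist. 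It therefore suffices to impose
\[
K_{\mathrm{rad}}=q\,K_{\mathrm{tan}}>0
\]
away from the poles. Then $\min K=K_{\mathrm{tan}}$, $\max K=K_{\mathrm{rad}}$, and $\dsec\equiv1/q$.

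To solve this relation, I will regard $p=f'$ as a function of $f$ on a monotone branch, so that $f''=p\,\dd p/\dd f$. The relation becomes $p\,\dd p/\dd f=-q(1-p^2)/f$. For $w=1-p^2$ this reads $\dd w/\dd f=2qw/f$, so $w=Cf^{2q}$, where $C>0$ is the constant in the statement. Thus the relevant $f$ satisfies
\[
f'^2=1-Cf^{2q},\qquad f''=-qC\,f^{2q-1},
\]
and then
\[
K_{\mathrm{tan}}=Cf^{2q-2},\qquad K_{\mathrm{rad}}=qCf^{2q-2}.
\]
Both are positive wherever $f>0$ and vanish exactly where $f=0$. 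Since $q\ge2$, all curvature vanishes at the poles: because $f\to0$ there, the whole curvature tensor tends to zero, so the poles are flat.

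Conversely, I will define $f$ as the solution of the second-order autonomous ODE $f''=-qCf^{2q-1}$ with $f(0)=0$ and $f'(0)=1$. Because $q$ is an integer, the right-hand side is an odd polynomial in $f$, so $f$ is real-analytic. Since $t\mapsto -f(-t)$ solves the same problem, $f$ is odd. Energy conservation gives $f'^2+Cf^{2q}=1$, so the relation $K_{\mathrm{rad}}=qK_{\mathrm{tan}}$ holds on the whole solution, not merely along monotone branches. The solution increases until $f'=0$ at $f=C^{-1/(2q)}$, which happens at a finite time $L$, because $\int_0^{C^{-1/(2q)}}(1-Cf^{2q})^{-1/2}\dd f<\infty$. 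By reflection symmetry of the autonomous equation, $f(L+s)=f(L-s)$. Hence $f$ vanishes again at $2L$ with $f'(2L)=-1$, and $f$ is odd about $2L$ as well.

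The main step to check carefully is smooth closure at both poles. The standard criterion for a rotationally symmetric metric is that $f$ extends to an odd smooth function of the distance to the pole with derivative $1$ there, up to sign. This criterion is satisfied at $t=0$ and, by the reflection symmetry, at $t=2L$. The result is a smooth metric on $\Sph^n$ with positive curvature on $(0,2L)$, flat poles, and $\dsec\equiv1/q$ at every nonflat point. I would also remark where integrality of $q$ is used: for non-integer $q$, the term $f^{2q-1}$ is not smooth at $f=0$, and this is what restricts the construction to integer $q$.
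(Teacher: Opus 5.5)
Your proposal is correct and follows essentially the same route as the paper. Both use the initial-value problem $f''=-qCf^{2q-1}$, $f(0)=0$, $f'(0)=1$, whose first integral $(f')^2=1-Cf^{2q}$ gives $K_{\mathrm{rad}}=qCf^{2q-2}=qK_{\mathrm{tan}}$; both use the diagonal curvature operator to place every sectional curvature between these two values, and both get smooth closure at the poles from the odd polynomial right-hand side, with your reduction-of-order derivation and reflection argument about $t=L$ only adding detail the paper leaves implicit.
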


\begin{proof}
Let $f$ solve
\begin{equation}\label{eq:fq-ode}
f''=-qCf^{2q-1},
\qquad f(0)=0,
\qquad f'(0)=1.
\end{equation}
The first integral is
\begin{equation}\label{eq:fq-first}
(f')^2=1-Cf^{2q}.
\end{equation}
The solution increases from zero to $C^{-1/(2q)}$ and returns to zero in finite time with derivative $-1$.  Since the right-hand side of \eqref{eq:fq-ode} is a smooth odd function of $f$, the metric
\[
g=\dd t^2+f(t)^2g_{\Sph^{n-1}}
\]
extends smoothly over both endpoints to $\Sph^n$.

The radial and tangential sectional curvatures are
\[
K_{\mathrm{rad}}=-\frac{f''}{f}=qCf^{2q-2},
\qquad
K_{\mathrm{tan}}=\frac{1-(f')^2}{f^2}=Cf^{2q-2}.
\]
They are positive away from the poles and vanish at the poles because $q\ge2$.  The curvature operator is diagonal on radial and tangential two-forms, so every sectional curvature lies between $K_{\mathrm{tan}}$ and $K_{\mathrm{rad}}$.  Their ratio is exactly $q$, and hence $\dsec=1/q$.  These metrics also realize equality in Lemma~\ref{lem:section-conditioning}: the Einstein-tensor condition number is $C_{n,1/q}$.
\end{proof}

For $q=2$ this shows that the strict inequality $\delta_0>1/2$ in Theorem~\ref{thm:flat-intro}(ii) cannot be weakened.  For $q=4$ it gives an exactly quarter-pinched sphere with two flat poles.  Replacing \eqref{eq:fq-first} locally by $(f')^2=1+Cf^{2q}$ gives
\[
K_{\mathrm{rad}}=-qCf^{2q-2},
\qquad
K_{\mathrm{tan}}=-Cf^{2q-2},
\]
so isolated flat poles also occur on exact negatively curved local models.  For comparison, the non-spherical compact rank-one symmetric spaces with their standard metrics are classical exact quarter-pinched examples without flat points; see \cite{Besse}.  The present family shows that exact pinching neither forces local symmetry nor excludes isolated flat points.

\subsection{Dimension two}

When $n=2$ there is only one unoriented tangent $2$-plane, so $\dsec=1$ at every nonflat point.  The sign-rigidity theorem fails.  For example,
\begin{equation}\label{eq:surface-warped}
g=\dd t^2+(a-\cos t)^2\dd\theta^2,
\qquad a>1,
\end{equation}
on $\Sph^1\times\Sph^1$ has Gaussian curvature
\[
K=-\frac{\cos t}{a-\cos t},
\]
which changes sign.

\section{Ricci curvature: the algebraic threshold and rigidity}\label{sec:ricci-rigidity}

\subsection{The signed Einstein tensor}

\begin{lemma}[Einstein positivity above the Ricci threshold]\label{lem:ricci-Ein}
Let $n\ge3$.  Suppose $\Ric>0$ at a point, with eigenvalues
\[
0<m\le\lambda_i\le M,
\qquad m\ge\delta M.
\]
If $\delta>1/(n-1)$ and
\[
T=-\Ein=\frac12\Scal\,g-\Ric,
\]
then
\begin{equation}\label{eq:ricci-a}
T\ge a(\tr T)g,
\qquad
a=\frac{(n-1)\delta-1}{n(n-2)}>0.
\end{equation}
At a negative-Ricci point the same estimate holds for $T=\Ein$, after replacing the Ricci eigenvalues by their absolute values.
\end{lemma}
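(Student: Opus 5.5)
The plan is to diagonalize everything at once, reduce the tensor inequality to a scalar inequality in the largest Ricci eigenvalue, and then check that scalar inequality by an elementary quadratic factorization. Since $T=\tfrac12\Scal\,g-\Ric$ is a polynomial in $\Ric$, it has the same orthonormal eigenbasis as $\Ric$. Its eigenvalues are
\[
t_i=\tfrac12 S-\lambda_i,
\qquad S=\Scal=\sum_j\lambda_j .
\]
Its trace is
\[
\tr T=\tfrac n2 S-S=\tfrac{n-2}2 S .
\]
So \eqref{eq:ricci-a} is equivalent to
\[
\min_i t_i\ \ge\ a\,\tfrac{n-2}{2}S=\frac{(n-1)\delta-1}{2n}\,S .
\]
The minimum of the $t_i$ is attained at the largest Ricci eigenvalue, $\lambda_{\max}\le M$. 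It therefore suffices to prove
\[
\frac{S}{2}\Bigl(1-\frac{(n-1)\delta-1}{n}\Bigr)\ge M,
\qquad\text{i.e.}\qquad
S\,\frac{n+1-(n-1)\delta}{2n}\ge M .
\]
Here I use that $\lambda_{\max}\le M$ and that the left side does not depend on which eigenvalue is maximal.

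Next I bound $S$ from below in terms of $M$. One eigenvalue may be taken to be $\lambda_{\max}$. If $\lambda_{\max}<M$, I simply replace $M$ by $\lambda_{\max}$; this is legitimate because the pinching hypothesis $\min\lambda_i\ge\delta\lambda_{\max}$ still holds. The remaining $n-1$ eigenvalues are each at least $m\ge\delta M$, so
\[
S\ge M\bigl(1+(n-1)\delta\bigr).
\]
Since $\delta\le 1$, the coefficient $n+1-(n-1)\delta$ is positive. The claim therefore reduces to the purely numerical inequality
\[
\bigl(1+(n-1)\delta\bigr)\bigl(n+1-(n-1)\delta\bigr)\ge 2n .
\]

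Setting $x=(n-1)\delta\in(1,n-1]$ and expanding, the difference of the two sides is
\[
n+1+nx-x^2-2n=-x^2+nx-(n-1)=-(x-1)\bigl(x-(n-1)\bigr).
\]
This is nonnegative exactly on $[1,n-1]$. Hence the inequality holds throughout the admissible range. The constant $a$ is positive precisely because $x>1$, i.e. $\delta>1/(n-1)$. At a negative-Ricci point, replacing $g$-eigenvalues by $-\lambda_i$ turns $\Ein$ into the previous $-\Ein$ for the positive eigenvalues $\abs{\lambda_i}$, so the same computation applies verbatim.

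I do not expect a real obstacle. The only point needing care is justifying that reducing to the extremal configuration is lossless: one eigenvalue equal to the maximum and all others at the lower bound $\delta\lambda_{\max}$. This configuration is also what makes the threshold $\delta_c$ appear, as the root $x=1$ of the factorization.
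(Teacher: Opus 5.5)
Your proof is correct. It differs from the paper's mainly in how the minimum eigenvalue of $T$ and the trace are estimated.

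The paper bounds the two quantities separately in terms of $M$. Every eigenvalue of $T$ satisfies $\tau_i\ge\tfrac12((n-1)m-M)\ge\tfrac12((n-1)\delta-1)M$, while $\tr T=\tfrac{n-2}{2}\Scal\le\tfrac{n(n-2)}{2}M$. Dividing gives $a$ at once, with no normalization and no quadratic.

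You instead couple the two quantities through $\lambda_{\max}$. After normalizing $M=\lambda_{\max}$ and using $\Scal\ge(1+(n-1)\delta)\lambda_{\max}$, you are in effect proving the optimal constant $a^*=\frac{(n-1)\delta-1}{(n-2)(1+(n-1)\delta)}$. This constant is attained by the spectrum $(\lambda_{\max},\delta\lambda_{\max},\dots,\delta\lambda_{\max})$. Your factorization $(x-1)(n-1-x)\ge0$ is exactly the comparison $a^*\ge a$, i.e. $1+(n-1)\delta\le n$.

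So the paper's route is shorter. Yours identifies the extremal spectrum and shows that the stated $a$ is off from the sharp value by the factor $(1+(n-1)\delta)/n$. That loss is harmless for the rigidity application, which only needs some positive constant.

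One expository fix: state the normalization $M=\lambda_{\max}$ before you claim it suffices to prove $\Scal\,\frac{n+1-(n-1)\delta}{2n}\ge M$. For unnormalized $M$ that sufficient condition can fail. For example, take all $\lambda_i=\delta M$ with $\delta$ slightly above $1/(n-1)$: the left side is then about $\frac{n}{2(n-1)}M<M$. The inequality with $\lambda_{\max}$ in place of $M$, which is what is actually needed, does hold.
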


\begin{proof}
In a Ricci eigenbasis, the eigenvalues of $T=-\Ein$ are
\[
\tau_i=\frac12\left(\sum_{j\ne i}\lambda_j-\lambda_i\right).
\]
Therefore
\[
\tau_i\ge\frac12\bigl((n-1)m-M\bigr)
\ge\frac12\bigl((n-1)\delta-1\bigr)M.
\]
Also
\[
\tr T=\frac{n-2}{2}\Scal
\le\frac{n(n-2)}2M.
\]
Combining the inequalities gives \eqref{eq:ricci-a}.  The negative case follows by applying the same calculation to $-\Ric$.
\end{proof}

At the endpoint $\delta=1/(n-1)$ the tensor is only semidefinite.  Equality occurs for the spectrum
\begin{equation}\label{eq:critical-spectrum}
\bigl((n-1)q,q,\dots,q\bigr),
\qquad q>0,
\end{equation}
for which one eigenvalue of $-\Ein$ is zero.  Remark~\ref{rem:semidefinite-deadcore} shows that semidefiniteness and divergence-freeness alone cannot support an open-dead-core theorem.

\begin{lemma}[Divergence-free Ricci phase truncation]\label{lem:ricci-truncation}
Assume the Ricci sign trichotomy and put
\[
P=\{\Scal>0\}=\{\Ric>0\},
\qquad
N=\{\Scal<0\}=\{\Ric<0\}.
\]
Define
\[
T_+=\begin{cases}-\Ein,&P,\\0,&M\setminus P,\end{cases}
\qquad
T_-=\begin{cases}\Ein,&N,\\0,&M\setminus N.\end{cases}
\]
Then $T_+$ and $T_-$ are continuous, locally integrable, and distributionally divergence-free.
\end{lemma}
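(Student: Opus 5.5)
The natural approach is to mirror Lemma~\ref{lem:positive-part}, but to avoid spectral truncation altogether. Under the Ricci sign trichotomy the sets $P$ and $N$ coincide with the sets where $\Scal$ is positive or negative: a positive definite Ricci tensor has positive trace, a negative definite one has negative trace, and on $\cZ_{\Ric}$ one has $\Scal=0$. Hence $P$ and $N$ are open, and $T_+=\mathbf 1_P(-\Ein)$ is an indicator times a smooth tensor. I would treat $T_+$ in detail; $T_-$ follows by replacing $g$ with the same argument applied to $\Ein$ on $N$.

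\emph{Continuity.} I first check continuity at points of $\partial P$. Such a point $p$ is not in $P$, and it is not in $N$ because $N$ is open and $p$ is a limit of points of $P$, which is disjoint from $N$. So $p\in\cZ_{\Ric}$. There $\Ric=0$ and $\Scal=0$, so $\Ein(p)=0$. Since $\Ein$ is smooth, $T_+$ is continuous at $p$. At interior points of $P$ or of $M\setminus P$ continuity is immediate. A continuous tensor is locally integrable, which settles that claim.

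\emph{Divergence-freeness.} I plan to show that $T_+$ is locally Lipschitz and that its a.e.\ derivative is $\mathbf 1_P\nabla(-\Ein)$. Lipschitz continuity across the interface needs care because of the indicator. I would use the pointwise estimate
\[
\abs{T_+(x)-T_+(y)}\le\abs{\Ein(x)-\Ein(y)},
\]
proved by cases:
\begin{enumerate}[label=(\roman*)]
\item if $x,y$ are both in $P$ or both outside $P$, the inequality is trivial;
\item if $x\in P$ and $y\notin P$, the segment from $x$ to $y$ meets $\partial P\subset\cZ_{\Ric}$ at some point $z$, where $\Ein(z)=0$, so $\abs{T_+(x)}=\abs{\Ein(x)-\Ein(z)}$, which is at most the local Lipschitz constant times $d(x,y)$.
\end{enumerate}
Case (ii) does not give the literal inequality displayed above, but it does give local Lipschitz continuity with the Lipschitz constant of $\Ein$ on a convex coordinate neighborhood.

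Next I compute the a.e.\ derivative. On $\cZ_{\Ric}$ every component of $\Ein$ is a $C^1$ function vanishing there, so by the Stampacchia property used in Lemma~\ref{lem:positive-part}, $\nabla\Ein=0$ a.e.\ on that set. On the same set $T_+=0$, so $\nabla T_+=0$ a.e.\ there as well. Therefore a.e.\ we have $\nabla T_+=\mathbf 1_P\nabla(-\Ein)$ off $\cZ_{\Ric}$, and both sides vanish on it. Taking the trace gives
\[
\diver T_+=-\mathbf 1_P\diver\Ein=0
\]
a.e., by \eqref{eq:Bianchi}. For a $W^{1,\infty}_{\mathrm{loc}}$ tensor the weak divergence equals the a.e.\ divergence, so $T_+$ is distributionally divergence-free, and in particular satisfies the Hessian-test identity.

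The main obstacle is guaranteeing that no singular flux appears on the possibly irregular interface $\partial P$, since no regularity of $\cZ_{\Ric}$ is available. The Lipschitz-plus-Stampacchia argument above handles this without any Gauss--Green formula on $\partial P$. The point to verify carefully is the segment argument in case (ii), carried out in a convex coordinate ball.
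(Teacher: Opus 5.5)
Your proof is correct, but it takes a different route from the paper.

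\textbf{The paper's route.} The paper never proves Lipschitz regularity of $T_+$. It approximates by $T_{+,\eps}=-\chi_\eps(\Scal)\Ein$ with a smooth cutoff, and the Bianchi identity gives $\diver T_{+,\eps}=-\chi_\eps'(\Scal)\Ein(\nabla\Scal,\cdot)$. It then controls this error with the quantitative bound $\norm{\Ein}_{\mathrm{op}}\le\frac12\Scal$ on $P$, which holds because the Ricci eigenvalues lie in $[0,\Scal]$. This yields $\abs{\diver T_{+,\eps}}\le C\abs{\nabla\Scal}\mathbf 1_{\{0<\Scal<\eps\}}$, which tends to zero in $L^1_{\mathrm{loc}}$ by dominated convergence.

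\textbf{Your route.} You transplant the mechanism of Lemma~\ref{lem:positive-part} to the indicator truncation. The first-exit argument along a coordinate segment is sound: the first point of the segment outside the open set $P$ lies in $\partial P\subset\cZ_{\Ric}$, where $\Ein$ vanishes. This gives $T_+\in W^{1,\infty}_{\mathrm{loc}}$. Stampacchia applied to $T_+$ on its zero set then rules out any contribution from $\cZ_{\Ric}$.

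\textbf{What each approach buys.} Your argument uses only that $\Ein$ is $C^1$, divergence-free, and zero on $\partial P$. So it shows that $\mathbf 1_P E$ is divergence-free for any such $E$ and any open $P$, and it gives $W^{1,\infty}_{\mathrm{loc}}$ regularity rather than mere continuity. The paper's argument is more elementary: it needs no Rademacher, no Stampacchia, and no segment geometry. It does, however, depend on the sign trichotomy, through the linear control of $\Ein$ by the defining function $\Scal$.

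\textbf{A small simplification.} The Stampacchia statement for $\Ein$ on $\cZ_{\Ric}$ is superfluous. On $\cZ_{\Ric}$ you only need $\nabla T_+=0$ almost everywhere, which is Stampacchia for $T_+$, together with $\mathbf 1_P=0$ there.
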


\begin{proof}
We prove the assertion for $T_+$.  On $P$, every Ricci eigenvalue lies between $0$ and $\Scal$, so each eigenvalue of $\Ein$ has absolute value at most $\Scal/2$.  In particular,
\begin{equation}\label{eq:Ein-size-Ricci}
\norm{\Ein}_{\mathrm{op}}\le\frac12\Scal
\qquad\text{on }P.
\end{equation}
At a point where $\Scal=0$, the sign trichotomy forces $\Ric=0$, and hence $\Ein=0$.  Thus the extension $T_+$ is continuous.

Choose smooth functions $\chi_\eps:\R\to[0,1]$ with $\chi_\eps=0$ on $(-\infty,0]$, $\chi_\eps=1$ on $[\eps,\infty)$, and $\abs{\chi_\eps'}\le C/\eps$.  Set
\[
T_{+,\eps}=-\chi_\eps(\Scal)\Ein.
\]
Using the contracted Bianchi identity,
\[
\diver T_{+,\eps}
=-\chi_\eps'(\Scal)\Ein(\nabla\Scal,\cdot).
\]
By \eqref{eq:Ein-size-Ricci}, on every compact set
\[
\abs{\diver T_{+,\eps}}
\le C\abs{\nabla\Scal}\,\mathbf1_{\{0<\Scal<\eps\}}.
\]
The right-hand side tends to zero in $L^1_{\mathrm{loc}}$ by dominated convergence, while $T_{+,\eps}\to T_+$ in $L^1_{\mathrm{loc}}$.  Hence $\diver T_+=0$ distributionally.  The proof for $T_-$ is identical after replacing $\Scal$ by $-\Scal$.
\end{proof}

\subsection{Supercritical sign rigidity}

\begin{theorem}[Locally uniform supercritical rigidity]\label{thm:ricci-rigidity}
Let $(M^n,g)$ be connected, $n\ge3$, satisfy the Ricci sign trichotomy, and be locally uniformly supercritical.  Then the positive and negative Ricci phases cannot both occur.  More generally, it is enough to impose a locally uniform supercritical bound on the positive phase alone: then either that phase is empty, or the negative phase is empty and the Ricci-flat set has empty interior.  The symmetric statement holds for the negative phase.
\end{theorem}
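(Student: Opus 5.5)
The plan is to mirror the proof of Theorem~\ref{thm:sectional-intro}, with Lemma~\ref{lem:ricci-truncation} taking the place of the spectral truncation and Lemma~\ref{lem:ricci-Ein} supplying trace ellipticity. We treat the one-sided statement first. Assume the Ricci pinching is locally uniformly supercritical on $P=\cP_{\Ric}$ alone, and assume $P\ne\varnothing$. Let $T_+$ be the tensor of Lemma~\ref{lem:ricci-truncation}. That lemma makes $T_+$ continuous, hence in $L^1_{\mathrm{loc}}$, and distributionally divergence-free. Distributional divergence-freeness implies the Hessian-test identity required in Theorem~\ref{thm:support-intro}: integrate by parts once with the vector field $\nabla\phi$, using symmetry of $T_+$.

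Next we check the pointwise hypotheses of Theorem~\ref{thm:support-intro}. On $P$, Lemma~\ref{lem:ricci-Ein} gives $T_+\ge0$ and
\[
T_+\ge a_K(\tr T_+)g,\qquad a_K=\frac{(n-1)(\delta_c+\eps_K)-1}{n(n-2)}=\frac{(n-1)\eps_K}{n(n-2)}>0
\]
on each compact $K$. Off $P$ we have $T_+=0$, so the inequality holds trivially there. We then set $\kappa=\max\{n,1/a_K\}$ on $K$. To get a single measurable $\kappa$ that is bounded on compact sets, we use an exhaustion by compacts. Alternatively, note that Theorem~\ref{thm:support-intro} is only ever applied ballwise, through Lemma~\ref{lem:virial}, so a local constant suffices. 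Since $\tr T\ge \lambda_{\min}(T)\cdot n$ forces $\kappa\ge n$ automatically whenever $T\ne0$, the requirement $\kappa\ge n$ costs nothing.

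Now $T_+$ vanishes on the open set $N=\cN_{\Ric}=\{\Scal<0\}$ and on the interior of $\cZ_{\Ric}$. If either of these were nonempty, Theorem~\ref{thm:support-intro} would give $T_+\equiv0$ almost everywhere. But on the nonempty open set $P$, $T_+=-\Ein$ is positive definite by Lemma~\ref{lem:ricci-Ein}, a contradiction. Hence either $P=\varnothing$, or $N=\varnothing$ and $\cZ_{\Ric}$ has empty interior. The negative-phase statement follows by the same argument with $T_-$ and $\Scal$ replaced by $-\Scal$. Under two-sided local uniform supercriticality, if both phases were nonempty the one-sided statement applied to $P$ would force $N=\varnothing$, a contradiction. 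This gives the main assertion.

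The only genuinely delicate point is already handled by Lemma~\ref{lem:ricci-truncation}: the truncation produces no singular divergence on the interface $\{\Scal=0\}$. The remaining care is bookkeeping. The positivity constant $a_K$ must depend only on the compact set, which is exactly where local uniformity of the gap $\eps_K$ enters. Pointwise strictness alone would give an unbounded $\kappa$, consistent with the sharpness statement in Theorem~\ref{thm:ricci-intro}(d).
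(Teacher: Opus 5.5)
Your proposal is correct and is essentially the paper's proof. Both apply Theorem~\ref{thm:support-intro} to the truncated tensor $T_+$ of Lemma~\ref{lem:ricci-truncation}, with trace ellipticity $a_K=(n-1)\eps_K/(n(n-2))$ from Lemma~\ref{lem:ricci-Ein}, and use the nonempty open zero set $\cN_{\Ric}$ or the interior of $\cZ_{\Ric}$ to force a contradiction. Your additional bookkeeping supplies details the paper leaves implicit: deriving the Hessian-test identity from distributional divergence-freeness, and assembling a single locally bounded $\kappa\ge n$ via an exhaustion.
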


\begin{proof}
Assume first that a locally uniform supercritical bound is available on the positive phase, and let $T_+$ be the tensor from Lemma~\ref{lem:ricci-truncation}.  Lemma~\ref{lem:ricci-Ein} gives, on every compact set,
\[
T_+\ge a_K(\tr T_+)g
\]
for some $a_K>0$; outside the positive phase both sides vanish.  Thus $T_+$ satisfies the hypotheses of Theorem~\ref{thm:support-intro}.  If the positive phase is nonempty, the tensor cannot vanish on either a nonempty negative phase or an open Ricci-flat region.  This proves the positive one-sided statement.  The tensor $T_-$ proves the negative one-sided statement.  When both phases satisfy the locally uniform supercritical bound, they therefore cannot coexist.
\end{proof}

This proves part (a) of Theorem~\ref{thm:ricci-intro}.  A fixed global lower bound $\dric\ge\delta_0>1/(n-1)$ is a special case.

\section{Exact local Ricci sign change}\label{sec:ricci-local}

\subsection{Multiply warped curvature formulas}

Put $k=n-2\ge1$.  On an interval times a flat $(k+1)$-torus, consider
\begin{equation}\label{eq:double-warp}
g=\dd t^2+e^{2u(t)}\dd x^2+e^{2v(t)}\sum_{\alpha=1}^k\dd y_\alpha^2.
\end{equation}
Write
\[
A=u',\qquad B=v',\qquad D_A=A'+A^2,\qquad D_B=B'+B^2.
\]
The standard multiply warped formulas \cite{BishopONeill} give the sectional curvatures
\begin{align}
K(e_0,e_1)&=-D_A,&
K(e_0,e_\alpha)&=-D_B,\label{eq:double-sec1}\\
K(e_1,e_\alpha)&=-AB,&
K(e_\alpha,e_\beta)&=-B^2\quad(\alpha\ne\beta),\label{eq:double-sec2}
\end{align}
where
\[
e_0=\partial_t,
\qquad e_1=e^{-u}\partial_x,
\qquad e_\alpha=e^{-v}\partial_{y_\alpha}.
\]
Thus the Ricci tensor is diagonal with three eigenvalue types:
\begin{align}
\rho_0&=-D_A-kD_B,\label{eq:rho0}\\
\rho_1&=-D_A-kAB,\label{eq:rho1}\\
\rho_B&=-(B'+AB+kB^2),\label{eq:rhoB}
\end{align}
where $\rho_B$ has multiplicity $k$.

\subsection{Exact subcritical examples}

\begin{theorem}[Exact local subcritical construction]\label{thm:subcritical-local}
Let $n\ge3$ and $0<\delta<1/(n-1)$.  Then for some $\eps>0$ there is a smooth metric on $(-\eps,\eps)\times\T^{n-1}$ such that
\[
\Ric>0\quad(t<0),
\qquad
\Ric=0\quad(t=0),
\qquad
\Ric<0\quad(t>0),
\]
and $\dric\equiv\delta$ at every $t\ne0$.
\end{theorem}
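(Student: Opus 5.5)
The plan is to take the doubly warped ansatz \eqref{eq:double-warp} and prescribe the Ricci spectrum up to one common scalar factor. Concretely, I will look for $u,v$ with
\[
\rho_0=\delta\rho_1,\qquad \rho_B=\delta\rho_1
\]
on the whole interval. At every point the Ricci eigenvalues are then $\rho_1$ (multiplicity one) and $\delta\rho_1$ (multiplicity $n-1$). So wherever $\rho_1\ne0$, $\Ric$ is definite with the sign of $\rho_1$ and $\dric\equiv\delta$ exactly. Wherever $\rho_1=0$, $\Ric=0$. The problem thus reduces to producing a solution on which $\rho_1$ changes sign transversally at $t=0$. If the other assignment of multiplicities (for instance $\rho_0=\rho_1$, $\rho_B=\delta\rho_1$) turns out to be more convenient in the computations below, I will use it instead; the structure of the argument is identical.

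\textbf{Step 1: reduce to a planar autonomous system.} By \eqref{eq:rho0}--\eqref{eq:rhoB}, each $\rho$ is affine in $(A',B')$ with coefficients polynomial in $(A,B)$. Explicitly,
\[
\rho_0=-A'-kB'-A^2-kB^2,\qquad \rho_1=-A'-A^2-kAB,\qquad \rho_B=-B'-AB-kB^2.
\]
The two relations form a linear system for $(A',B')$. Its coefficient matrix is constant, with determinant a nonzero affine function of $\delta$; I will check that it does not vanish for $0<\delta<1/(n-1)$. Solving gives a quadratic polynomial vector field $(A',B')=V(A,B)$. Substituting back yields $\rho_1=\Phi(A,B)$, a homogeneous quadratic form in $(A,B)$. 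Any solution of $(A,B)'=V(A,B)$ is analytic. Then $u=\int A$ and $v=\int B$ give a smooth metric on $(-\eps,\eps)\times\T^{n-1}$ for which both relations hold identically.

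\textbf{Step 2: a Ricci-flat, non-flat initial point.} I need $(A_0,B_0)\ne0$ with $\Phi(A_0,B_0)=0$. By the relations, this forces all $\rho$'s to vanish there. Such points correspond to the Kasner-type Ricci-flat jets of the ansatz. Since $\Phi$ is a binary quadratic form, the question is whether it is indefinite, which comes down to the sign of its discriminant. I expect the subcritical condition $\delta<1/(n-1)$ to be exactly what makes the discriminant positive, so that the zero set of $\Phi$ consists of two lines through the origin.

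\textbf{Step 3: transversality, which I expect to be the main obstacle.} Along the flow,
\[
\frac{d}{dt}\Phi(A,B)=\nabla\Phi\cdot V,
\]
a homogeneous cubic. I must show that it is nonzero at some point of a null line of $\Phi$, i.e.\ that the vector field is not tangent to that line. By homogeneity this is a single algebraic inequality in $\delta$ and $k$ on each of the two null lines. I would first compute it on the line through a Kasner-type point, and, if necessary, use the freedom in the multiplicity assignment or the choice of line. Once the inequality holds, solve the ODE with initial data $(A_0,B_0)$ at $t=0$. Then $\rho_1$ has a simple zero at $t=0$, so it has opposite signs on the two sides for small $\eps$. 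Reversing $t$ if needed gives $\Ric>0$ for $t<0$ and $\Ric<0$ for $t>0$. The metric is not flat near $t=0$, because $(A_0,B_0)\ne0$ makes some sectional curvature in \eqref{eq:double-sec2} nonzero.
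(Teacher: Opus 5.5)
\textbf{There is a genuine gap: Step 3 cannot be carried out.} With your two relations, the transversality cubic vanishes identically on both null lines of $\Phi$. No reassignment of multiplicities rescues this when $\delta<1/(n-1)$.

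Solving your linear system gives the following (its determinant is $D=1+(k-1)\delta>0$):
\[
\rho_1=\Phi(A,B)=-\frac{k}{D}\,B\bigl(2A+(k-1)B\bigr),\qquad
B'=-\frac{B}{D}\bigl((1-(k+1)\delta)A+kB\bigr).
\]
So the null set of $\Phi$ is the pair of lines $B=0$ and $2A=(1-k)B$ for every $\delta$. It has nothing to do with subcriticality. One checks that $\nabla\Phi\cdot V=-\frac{2(1-\delta)}{D}A\,\Phi$, which vanishes wherever $\Phi$ does. Hence both lines are invariant under the flow. $B=0$ is the flat polar solution, and $2A=(1-k)B$ is the Kasner solution, which is itself flat when $n=3$.

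The structural reason is the contracted Bianchi identity. For metrics \eqref{eq:double-warp}, $(\diver\Ein)(e_0)=0$ reads
\[
\Bigl(\rho_0-\tfrac12\Scal\Bigr)'+A(\rho_0-\rho_1)+kB(\rho_0-\rho_B)=0 .
\]
Suppose $\rho_0,\rho_1,\rho_B$ are frozen multiples $c_0\lambda,c_1\lambda,c_B\lambda$ of one scalar. Then this becomes the linear homogeneous ODE
\[
\tfrac12(c_0-c_1-kc_B)\lambda'=-\bigl((c_0-c_1)A+(c_0-c_B)kB\bigr)\lambda .
\]
So $\lambda$ vanishes either identically or nowhere, and a trajectory through a Ricci-flat point stays Ricci-flat. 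With $c_i\in\{1,\delta\}$, the coefficient $c_0-c_1-kc_B$ vanishes only for $c_0=1$, $c_1=c_B=\delta=1/(k+1)$, which is the critical value. Thus no two-valued Ricci spectrum with a frozen ratio can change sign in the subcritical range, whatever multiplicities you assign.

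\textbf{What is missing is a free, non-constant eigenvalue ratio.} The paper imposes only $\rho_0=C\rho_1$ with $C=\delta^{-1}$, prescribes $B=t^2$, and solves the Riccati equation \eqref{eq:A-sub-ode} with $A(0)=0$. Then $\rho_1=-2\alpha t+O(t^4)$ changes sign while $\rho_B/\rho_1$ varies.

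That variation is forced. Put $h=(C-1)\rho_1-k\rho_B=2\Ein(e_0,e_0)$. The identity above gives
\[
h'=-2\bigl((C-1)A\rho_1+kB(C\rho_1-\rho_B)\bigr).
\]
So at a simple Ricci-flat zero of $\rho_1$ one has $h=O(t^2)$, which forces $\rho_B/\rho_1\to(C-1)/k$.

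The subcritical condition $C>k+1$ is exactly $1<(C-1)/k<C$. This keeps $\rho_B$ strictly between $\rho_1$ and $\rho_0$ near $t=0$, so $\rho_0$ and $\rho_1$ remain the extremal eigenvalues and $\dric\equiv\delta$.
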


\begin{proof}
Set
\[
C=\delta^{-1},
\qquad
\alpha=\frac{k}{C-1}.
\]
The subcritical condition is $C>k+1$, so $0<\alpha<1$.  Impose
\begin{equation}\label{eq:rho-relation-sub}
\rho_0=C\rho_1.
\end{equation}
Using \eqref{eq:rho0}--\eqref{eq:rho1}, this is equivalent to
\begin{equation}\label{eq:DA-relation}
D_A=\alpha D_B-C\alpha AB.
\end{equation}
Choose
\[
B(t)=t^2
\]
and let $A$ solve the smooth initial-value problem
\begin{equation}\label{eq:A-sub-ode}
A'=\alpha(2t+t^4)-C\alpha At^2-A^2,
\qquad A(0)=0.
\end{equation}
A unique smooth solution exists near zero.  Integrating $u'=A$ and $v'=B$ gives a metric of the form \eqref{eq:double-warp}.

Equation \eqref{eq:DA-relation} yields the useful identity
\begin{equation}\label{eq:rho1-useful}
\rho_1=\alpha(AB-D_B).
\end{equation}
The ODE gives
\[
A(t)=\alpha t^2+O(t^5).
\]
Since $D_B=2t+t^4$, we obtain
\begin{align*}
\rho_1(t)&=-2\alpha t+O(t^4),\\
\rho_0(t)&=C\rho_1(t),\\
\rho_B(t)&=-2t+O(t^4).
\end{align*}
Therefore
\begin{equation}\label{eq:ratio-sub}
\frac{\rho_B(t)}{\rho_1(t)}\longrightarrow
\frac1\alpha=\frac{C-1}{k}
\qquad(t\to0,\ t\ne0).
\end{equation}
Because $C>k+1$,
\[
1<\frac{C-1}{k}<C.
\]
After shrinking $\eps$, one has
\[
1<\frac{\rho_B}{\rho_1}<C
\]
for $0<\abs t<\eps$.  All Ricci eigenvalues therefore have the sign of $-t$, and their absolute values are ordered as
\[
\abs{\rho_1}<\abs{\rho_B}<\abs{\rho_0}=C\abs{\rho_1}.
\]
At $t=0$, $A=B=A'=B'=0$, so $\Ric=0$.  Finally,
\[
\dric=\frac{\abs{\rho_1}}{\abs{\rho_0}}=\frac1C=\delta.
\]
\end{proof}

\begin{remark}[How the three-dimensional quarter-pinched model evades rigidity]\label{rem:three-dim-Ein}
For $n=3$ and $\delta=1/4$, write the Ricci spectrum as
\[
4q,\quad rq,\quad q,
\qquad 1<r<4.
\]
The preceding expansions give $r(t)=3+t^3+O(t^4)$.  The eigenvalues of $-\Ein$ are
\[
\frac{r-3}{2}q,
\qquad
\frac{5-r}{2}q,
\qquad
\frac{r+3}{2}q.
\]
On the positive-Ricci side $t<0$, one has $q>0$ and $r<3$ near the interface, so $-\Ein$ is indefinite.  This is the concrete algebraic failure that disappears above the threshold $1/(n-1)$.
\end{remark}

\subsection{Exact critical collars}

Let
\begin{equation}\label{eq:flat-b}
b(s)=\begin{cases}e^{-1/s^2},&s>0,\\0,&s\le0.\end{cases}
\end{equation}
Then
\begin{equation}\label{eq:b-dominates}
b'(s)=\frac2{s^3}b(s),
\qquad
\frac{b}{b'}\to0,
\qquad
\frac{b^2}{b'}\to0
\quad(s\downarrow0).
\end{equation}
Put $m=n-1=k+1$.  In the doubly warped ansatz impose
\begin{equation}\label{eq:critical-identity}
D_A=D_B-mAB,
\end{equation}
that is,
\begin{equation}\label{eq:A-critical-ode}
A'=B'+B^2-mAB-A^2.
\end{equation}
A direct substitution gives
\begin{equation}\label{eq:rho-critical-relation}
\rho_0=m\rho_1.
\end{equation}

\begin{proposition}[Positive and negative exact-critical collars]\label{prop:critical-collars}
Fix $a>0$.
\begin{enumerate}[label=(\roman*)]
\item If $B=-b$ and $A$ solves \eqref{eq:A-critical-ode} with $A(0)=a$, then for sufficiently small $s>0$ the corresponding metric has $\Ric>0$ and $\dric=1/m$.
\item If $B=b$ and $A$ solves the same equation with $A(0)=a$, then for sufficiently small $s>0$ the metric has $\Ric<0$ and $\dric=1/m$.
\item After a suitable normalization of $u$ and $v$, both metrics are Euclidean for $s\le0$.
\end{enumerate}
\end{proposition}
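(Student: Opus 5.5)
The plan is to work directly in the doubly warped ansatz \eqref{eq:double-warp}, using $s$ as the coordinate $t$. I will express all three Ricci eigenvalue types \eqref{eq:rho0}--\eqref{eq:rhoB} in terms of $b$, $b'$ and $A$, then use the dominance relations \eqref{eq:b-dominates} to read off signs and ordering as $s\downarrow0$.

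\textbf{Step 1: algebraic reduction.} Under \eqref{eq:critical-identity} I substitute $D_A=D_B-mAB$ into \eqref{eq:rho1}. Since $m-k=1$, this gives
\[
\rho_1=AB-D_B,\qquad \rho_0=m\rho_1,
\]
which is \eqref{eq:rho-critical-relation}. Subtracting,
\[
\rho_B-\rho_1=-2AB-(k-1)B^2 .
\]
So the whole problem reduces to the signs of $\rho_1$ and of $\rho_B-\rho_1$, together with the fact that $\rho_B/\rho_1\to1$.

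\textbf{Step 2: behaviour of $A$.} Since $b$ is smooth, \eqref{eq:A-critical-ode} with $A(0)=a$ is a smooth initial-value problem. Its solution is smooth on a neighbourhood of $s=0$ and satisfies $A(s)\to a>0$ as $s\to0$.

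\textbf{Step 3: the two collars.}
\begin{itemize}
\item[(i)] For $B=-b$ one has $D_B=-b'+b^2$. Then
\[
\rho_1=b'-b^2-Ab=b'(1+o(1)),\qquad \rho_B=b'+Ab-kb^2=b'(1+o(1)),
\]
by \eqref{eq:b-dominates}, so all eigenvalues are positive for small $s>0$. Moreover
\[
\rho_B-\rho_1=2Ab-(k-1)b^2=b\,(2a+o(1))>0 .
\]
Hence $\rho_1<\rho_B$, and since $\rho_B/\rho_1\to1<m$ also $\rho_B<m\rho_1=\rho_0$. The minimum eigenvalue is therefore $\rho_1$ and the maximum is $\rho_0$, so $\dric=1/m$ exactly.
\item[(ii)] For $B=b$ the same computation gives $\rho_1=-b'(1+o(1))<0$ and $\rho_B=-b'(1+o(1))$. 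Now
\[
\rho_B-\rho_1=-2Ab-(k-1)b^2<0 ,
\]
so $\abs{\rho_1}<\abs{\rho_B}<\abs{\rho_0}=m\abs{\rho_1}$, and again $\dric=1/m$.
\end{itemize}
The only delicate point is this ordering. It hinges on $a>0$: without it $\rho_B-\rho_1$ would be of order $b^2$ and its sign would depend on $k$. That is why the initial value $A(0)=a>0$ is imposed.

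\textbf{Step 4: flatness for $s\le0$ (part (iii)).} There $B\equiv0$, so \eqref{eq:A-critical-ode} becomes $A'=-A^2$, with solution $A=a/(1+as)$ for $s>-1/a$. I normalize $u(0)=0$ and $v\equiv0$ on $s\le0$. Then $e^{u}=1+as$, and the metric is
\[
\dd s^2+(1+as)^2\dd x^2+\sum_\alpha\dd y_\alpha^2 .
\]
This is flat: $D_A=A'+A^2=0$ and $B=0$, so every sectional curvature in \eqref{eq:double-sec1}--\eqref{eq:double-sec2} vanishes. It is a polar-coordinate flat plane times a flat factor, hence locally Euclidean.

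Smoothness of the full metric across $s=0$ follows because $b$, and hence $v$, is smooth and flat to all orders there, and $A$ solves a smooth ODE.
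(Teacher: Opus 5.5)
Your proof is correct and follows essentially the same route as the paper: the same reduction $\rho_1=AB-D_B$, $\rho_0=m\rho_1$, the same sign analysis of $\rho_B-\rho_1=-2AB-(k-1)B^2$ driven by $a>0$, and the upper ordering via dominance of $b'$. The paper writes $m\rho_1-\rho_B=kb'-b^2-(k+2)Ab>0$ explicitly where you use $\rho_B/\rho_1\to1<m$, and in (iii) it normalizes $e^{u(0)}=R=1/a$ so that $e^{u}=R+s$ matches $2\pi$-periodic polar coordinates exactly, whereas your choice $u(0)=0$ gives the same flat metric only after rescaling $x$ by $a$.
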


\begin{proof}
For the positive collar set
\[
\beta=b'-b^2,
\qquad
\gamma=Ab.
\]
By \eqref{eq:b-dominates}, $\gamma/\beta\to0$, $b^2/\beta\to0$, and $b^2/\gamma=b/A\to0$.  Equations \eqref{eq:rho0}--\eqref{eq:rhoB} and \eqref{eq:critical-identity} give
\[
\rho_1=\beta-\gamma=:q,
\qquad
\rho_0=mq,
\qquad
\rho_B=\beta+\gamma-(k-1)b^2.
\]
For small $s>0$, $q>0$ and
\begin{align*}
\rho_B-q&=2\gamma-(k-1)b^2>0,\\
mq-\rho_B&=kb'-b^2-(k+2)Ab>0.
\end{align*}
Hence
\[
0<q<\rho_B<mq,
\]
which proves the positive assertion.

For the negative collar put
\[
\beta=b'+b^2,
\qquad
\gamma=Ab,
\qquad
Q=\beta-\gamma.
\]
Then
\[
\rho_1=-Q,
\qquad
\rho_0=-mQ,
\qquad
\rho_B=-S,
\]
where
\[
S=\beta+\gamma+(k-1)b^2.
\]
For small $s>0$,
\begin{align*}
S-Q&=2\gamma+(k-1)b^2>0,\\
mQ-S&=kb'+b^2-(k+2)Ab>0.
\end{align*}
Thus $0<Q<S<mQ$, proving the negative assertion.

For $s\le0$, $B=0$ and \eqref{eq:A-critical-ode} reduces to $A'=-A^2$.  Writing $R=1/a$, one has
\[
A(s)=\frac1{R+s}.
\]
Choose $e^{u(0)}=R$ and $v(0)=0$.  Then
\[
e^{u(s)}=R+s,
\qquad e^{v(s)}=1,
\qquad s\le0,
\]
and
\begin{equation}\label{eq:flat-collar}
g=\dd s^2+(R+s)^2\dd\theta^2+\sum_{\alpha=1}^k\dd y_\alpha^2,
\end{equation}
which is Euclidean in polar coordinates on $\R^2\times\R^k$.  Since $b$ is flat at zero, the collar metrics agree with the Euclidean metric to infinite order.
\end{proof}

\begin{theorem}[Connected exact-critical example]\label{thm:critical-local}
For every $n\ge3$ there is a connected smooth Riemannian $n$-manifold with nonempty positive-Ricci, negative-Ricci, and Ricci-flat regions such that
\[
\dric\equiv\frac1{n-1}
\]
at every non-Ricci-flat point.
\end{theorem}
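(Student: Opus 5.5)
The plan is to glue the two collars of Proposition~\ref{prop:critical-collars} across a flat Euclidean region, using part (iii) of that proposition to match them to infinite order. The positive collar from (i) lives on $s\in(0,s_1)$ and is Euclidean for $s\le0$. The negative collar from (ii) is reflected, $s\mapsto -s+L$, and placed on the other side of a flat slab. Both collars are built on the same flat core model \eqref{eq:flat-collar}, namely
\[
g=\dd s^2+(R+s)^2\dd\theta^2+\sum_\alpha\dd y_\alpha^2 ,
\]
so they can be matched. Concretely, I define a single doubly warped metric on an interval in $s$ as follows. For $s\in[-L,0]$ take the flat piece $A=1/(R+s)$, $B=0$. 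For $s>0$ solve \eqref{eq:A-critical-ode} with $B=-b(s)$ and $A(0)=1/R$. For $s<-L$ solve it with $B=b(-L-s)$, with the initial value at $s=-L$ given by the flat solution, $A(-L)=1/(R-L)>0$; this requires $L<R$.

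First I check smoothness. In the ODE \eqref{eq:A-critical-ode} the coefficient $B$ is smooth and all of its derivatives vanish at the gluing points, so $A$ is smooth: it is a solution of a smooth ODE with a smooth forcing term. Then $u,v$ are obtained by integration, and the metric is smooth.

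Second, I handle the reflection. The formulas \eqref{eq:rho0}--\eqref{eq:rhoB} are written in the variable $t$. Under $t\mapsto -t$ we get $A\mapsto -A$ and $B\mapsto -B$, while $D_A$, $D_B$, $AB$ and $B^2$ are unchanged. The relation \eqref{eq:critical-identity} is therefore invariant. Hence on the left collar, in the reflected variable $\sigma=-L-s>0$, we have $B=-b(\sigma)$ and initial value $A=-1/(R-L)<0$. This does not literally match case (ii), which needs $B=b$ and $a>0$. So I must decide which sign combination really gives negative Ricci.

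The main obstacle is this sign bookkeeping. In case (i) the sign of $Ab$ mattered through $\gamma$, but for small $s$ the term $b'$ dominates everything. So I expect the essential requirement is only the sign of $B'$: $B'<0$ gives positive Ricci, $B'>0$ gives negative Ricci. The inequalities $\rho_B-q>0$ use $\gamma>0$, so the sign of $A$ does matter; I will redo the estimates with $A<0$ if necessary. If that fails, I use an alternative: place the two collars on different sides of the flat piece, both starting with $A>0$ in their forward direction. This is possible by taking the core to be $\R^2\setminus\{0\}$ times $\R^k$ in polar form, one collar at radius $R_1$ growing outward, the other arranged using the freedom in $a$. If even that is awkward, I will take a simpler route. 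Take two disjoint open subsets of a single Euclidean space: an annular shell $\{R_1<r<R_1+s_1\}$ carrying the positive collar, and a shell $\{R_2<r<R_2+s_1\}$ with $R_2>R_1+s_1$ carrying the negative collar. Each modified metric equals the Euclidean metric outside its shell on the inner side. The outer side is the one that needs care, so I would truncate each collar at small $s_1$ and take the manifold to be the connected union $\{r<R_2+s_1\}$ (times $\R^k$, in the $\T$-quotient or $\R$ form). The region inside is the flat annulus between the shells, and it is Ricci-flat.

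Finally, on the resulting connected manifold, Proposition~\ref{prop:critical-collars} gives $\dric=1/m$ exactly on both collars. The remaining points are Euclidean, hence Ricci-flat. So all three regions are nonempty and $\dric\equiv 1/(n-1)$ at every non-Ricci-flat point.
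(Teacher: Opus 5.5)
There is a genuine gap: none of your three gluing schemes gives exact pinching $1/m$ on a positive collar and a negative collar attached to one flat region. In the two-ended interval, the flat piece $e^{u}=R+s$ forces $A<0$ at the left end, in the direction pointing away from the slab. The sign of $A$ is not a technicality. Redoing the estimates of Proposition~\ref{prop:critical-collars} with forward data $A(0)<0$ gives $\gamma=Ab<0$. Hence $\rho_B-q=2\gamma-(k-1)b^2<0$ in the positive case, and $S-Q=2\gamma+(k-1)b^2<0$ in the negative case. So $\abs{\rho_B}<\abs{\rho_1}$ near the interface, and $\dric=\abs{\rho_B}/(m\abs{\rho_1})<1/m$ there, tending to $1/m$. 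Your heuristic that only the sign of $B'$ matters is right for the Ricci sign but wrong for exactness. The relation $\rho_0=m\rho_1$ gives $\dric=1/m$ only if $\rho_1$ has the smallest modulus, and that needs $A>0$ in the outward direction. Your literal choice $B=b(-L-s)$ is, in the reflected variable, the positive-collar form $\tilde B=-b$. That end therefore has positive Ricci with pinching below $1/m$, so your example has no negative phase at all.

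The fallbacks do not repair this. On the core $(\R^2\setminus\{0\})\times\R^k$, a collar attached to an inward-facing boundary again has forward $A<0$. The freedom in $a$ only changes the radius $R=1/a$, not that sign. The nested-shell version fails outright. A collar of Proposition~\ref{prop:critical-collars} is Euclidean only on its inner side $s\le0$, and at $r=R_1+s_1$ its Ricci curvature is nonzero. So no flat annulus $R_1+s_1<r<R_2$ can be attached smoothly there; with the Euclidean metric on that annulus, the metric on $\{r<R_2+s_1\}$ is not even continuous, since $e^{2v(s_1)}\neq1$.

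The missing idea is never to glue the outer edge of a collar to anything. The collar metrics are invariant in $(\theta,y)$, so both can be attached to the \emph{same} outward boundary $r=R$ of one flat region $\{r<R\}$, each starting with $A(0)=1/R>0$. They are placed only over disjoint open patches $U_\pm\subset\Sph^1\times\R^k$ with disjoint closures. The resulting manifold is the open connected set $M=\{r<R\}\cup\{R-\eta<r<R+\eps,\ (\theta,y)\in U_+\cup U_-\}$. It carries the Euclidean metric on $\{r<R\}$, the positive collar over $U_+$, and the negative collar over $U_-$. It is smooth by infinite-order agreement at $s=0$. This is the paper's construction.
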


\begin{proof}
Use Euclidean coordinates on $\R^2\times\R^k$, polar coordinates $(r,\theta)$ on $\R^2$, and set $s=r-R$.  Choose disjoint nonempty open sets $U_+,U_-\subset\Sph^1\times\R^k$ with disjoint closures.  For small $\eta,\eps>0$, let
\[
M=\{r<R\}\cup
\{R-\eta<r<R+\eps,\ (\theta,y)\in U_+\cup U_-\}.
\]
This is open and connected.  Put the Euclidean metric on $\{r<R\}$, the positive collar of Proposition~\ref{prop:critical-collars} over $U_+$, and the negative collar over $U_-$.  Formula \eqref{eq:flat-collar} and infinite-order agreement at $s=0$ give a smooth metric.  The pinching and signs follow from Proposition~\ref{prop:critical-collars}.
\end{proof}

The example is deliberately local and incomplete.  It proves that the endpoint lower bound $\dric\ge1/(n-1)$ cannot imply sign rigidity on arbitrary connected manifolds.

\subsection{Pointwise strictness without a locally uniform gap}

For a singly warped metric
\begin{equation}\label{eq:singly-sphere}
g=\dd s^2+f(s)^2g_{\Sph^m},
\qquad m=n-1,
\end{equation}
the Ricci eigenvalues are
\begin{align}
\lambda_0&=-m\frac{f''}{f},\label{eq:single-lambda0}\\
\lambda_S&=-\frac{f''}{f}+(m-1)\frac{1-(f')^2}{f^2},\label{eq:single-lambdaS}
\end{align}
where $\lambda_S$ has multiplicity $m$.

\begin{theorem}[Strict supercritical inequality with collapsing gap]\label{thm:strict-no-gap}
For every $n\ge3$ there is a connected local metric satisfying the Ricci sign trichotomy, with both definite signs, such that
\[
\dric(p)>\frac1{n-1}
\]
at every non-Ricci-flat point, while
\[
\inf_{\Ric\ne0}\dric=\frac1{n-1}.
\]
\end{theorem}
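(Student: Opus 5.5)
The plan is to adapt the collar-gluing scheme of Theorem~\ref{thm:critical-local} to the singly warped ansatz \eqref{eq:singly-sphere}. There the Ricci spectrum is $(\lambda_0,\lambda_S,\dots,\lambda_S)$. Put
\[
X=-\frac{f''}{f},\qquad Y=\frac{1-(f')^2}{f^2}.
\]
Then $\lambda_0=mX$ and $\lambda_S=X+(m-1)Y$, so wherever $X\ne0$,
\[
\frac{\lambda_S}{\lambda_0}=\frac1m+\frac{m-1}{m}\,r,\qquad r=\frac YX .
\]
If $0<r<m+1$, all Ricci eigenvalues share the sign of $X$ and $\dric=\min\{\lambda_S/\lambda_0,\lambda_0/\lambda_S\}>1/m$. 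Moreover $\dric\to1/m$ as $r\to0$. So the goal is a Euclidean core with two collars, one with $X>0$ and one with $X<0$, on each of which $Y/X\downarrow0$ at the flat interface.

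A single sign-changing interface in this ansatz fails. If $f'(s_0)^2=1$ and $h=X$ changes sign at $s_0$, then on one side $Y$ has the opposite sign to $X$. I will therefore only emanate collars outward from a flat region. Take $f(s)=s$ for $s\le R$, which is the Euclidean metric in polar coordinates. For $s>R$ impose $f''=-f\,h$ with $h=\pm b(s-R)$, where $b$ is the flat function \eqref{eq:flat-b}. This gives a smooth $f$ agreeing with $s$ to infinite order at $R$.

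For $h=+b$: $X=b>0$, and $(f')^2=1-2\int_R^s f'fh<1$, so $Y>0$. For $h=-b$: $X<0$, $(f')^2>1$ and $Y<0$. In both cases $f\approx R$ and $f'\approx1$ near $s=R$, so
\[
Y\approx\frac{2}{R}\int_R^s \abs{h},\qquad
r=\frac YX\approx\frac{2}{R}\,\frac{\int_0^{s-R}b}{b(s-R)}\longrightarrow0 .
\]
This uses \eqref{eq:b-dominates}: $\int_0^\sigma b\le \sigma\, b(\sigma)$ since $b$ is increasing, and in fact $\int_0^\sigma b=O(\sigma^3 b(\sigma))$. Hence $0<r<m+1$ for $0<s-R<\eps$, which gives the strict inequality $\dric>1/m$ with infimum exactly $1/m$ on each collar.

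For the gluing, choose disjoint open sets $U_\pm\subset\Sph^m$ with disjoint closures. Let
\[
M=\{\abs x<R\}\cup\{R-\eta<\abs x<R+\eps,\ x/\abs x\in U_+\cup U_-\}.
\]
Put the Euclidean metric on the ball and $\dd s^2+f_\pm(s)^2g_{\Sph^m}$ over $U_\pm$. Infinite-order agreement at $s=R$ gives smoothness. The flat ball is Ricci-flat, and the two collars give the positive and negative Ricci phases, so $M$ is connected and satisfies the trichotomy.

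The step needing the most care is the quantitative comparison $Y/X\to0$. This requires controlling $f'$ and $f$ uniformly on the collar and using \eqref{eq:b-dominates} to show $\int b/b\to0$. I expect this to be routine once written out.
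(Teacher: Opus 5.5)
Your proposal is correct and follows essentially the same route as the paper. The paper likewise uses singly warped collars $f''=\mp b f$ that emanate with infinite-order Euclidean data from the boundary of a flat ball, shows $\psi/b=Y/X\to0$ (it uses only the cruder bound $1-(f')^2\le Csb(s)$, which already suffices since $b$ is increasing), and glues the positive and negative collars over disjoint angular patches $U_\pm\subset\Sph^m$.
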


\begin{proof}
Let $b$ be the flat function \eqref{eq:flat-b}.  Start from a Euclidean radial collar
\[
f(s)=R+s,
\qquad s\le0.
\]
On a positive collar $s>0$, solve
\[
f''=-b(s)f,
\qquad f(0)=R,
\qquad f'(0)=1.
\]
Then $\lambda_0=mb>0$.  Put
\[
\psi=\frac{1-(f')^2}{f^2}.
\]
For small $s>0$, $f$ and $f'$ remain bounded and positive, while
\[
0<1-(f'(s))^2
=2\int_0^sb(\tau)f(\tau)f'(\tau)\dd\tau
\le Csb(s).
\]
Hence $\psi>0$ and $\psi/b\to0$.  Formula \eqref{eq:single-lambdaS} becomes
\[
\lambda_S=b+(m-1)\psi.
\]
After shrinking the collar, $0<\psi<b$, so
\[
b<\lambda_S<mb=\lambda_0.
\]
Therefore
\[
\dric=\frac{b+(m-1)\psi}{mb}>\frac1m,
\qquad
\dric\longrightarrow\frac1m
\quad(s\downarrow0).
\]

For a negative collar solve $f''=bf$ with the same flat initial data.  Then $\lambda_0=-mb$, and with
\[
\psi=\frac{(f')^2-1}{f^2}>0
\]
one has $\psi/b\to0$ and
\[
\lambda_S=-b-(m-1)\psi.
\]
The same pinching formula follows.

Finally, choose disjoint angular patches $U_+,U_-\subset\Sph^m$ and attach the positive and negative collars to disjoint portions of the boundary of a common Euclidean ball, exactly as in the proof of Theorem~\ref{thm:critical-local}.  The result is connected and smooth, with an open flat core.
\end{proof}

This proves the remaining parts of Theorem~\ref{thm:ricci-intro}.

\section{Global lower-bound examples and exactness obstructions}\label{sec:global-obstructions}

\subsection{Closed and complete subcritical examples}

\begin{proof}[Proof of Theorem~\ref{thm:global-intro}]
Let $m=n-1$, $a=a_\delta$, and
\[
f(t)=a-\cos t.
\]
The condition $0<\delta<1/m$ gives $a>m\ge2$, so $f>0$.  Put $c=\cos t$.  Since $f'=\sin t$ and $f''=c$, formulas \eqref{eq:single-lambda0}--\eqref{eq:single-lambdaS} give
\begin{align}
\lambda_0&=-\frac{mc}{a-c},\label{eq:compact-lambda0}\\
\lambda_S&=-\frac{c(a-mc)}{(a-c)^2}.
\label{eq:compact-lambdaS}
\end{align}
Because $a>m$, the factor $a-mc$ is positive for every $c\in[-1,1]$.  Thus both eigenvalues have sign $-\operatorname{sign}(c)$ and vanish simultaneously precisely when $c=0$.

At a non-Ricci-flat point,
\[
\abs{\lambda_0}=\frac{m\abs c}{a-c},
\qquad
\abs{\lambda_S}=\frac{\abs c(a-mc)}{(a-c)^2}.
\]
Moreover
\[
m(a-c)-(a-mc)=(m-1)a>0,
\]
so $\abs{\lambda_S}<\abs{\lambda_0}$.  Hence
\begin{equation}\label{eq:compact-pinching}
\dric(t)=\frac{a-m\cos t}{m(a-\cos t)}.
\end{equation}
As a function of $c=\cos t$,
\[
D(c)=\frac{a-mc}{m(a-c)},
\qquad
D'(c)=-\frac{a(m-1)}{m(a-c)^2}<0.
\]
The minimum occurs at $c=1$ and equals
\[
\frac{a-m}{m(a-1)}.
\]
Substitution of \eqref{eq:a-delta-intro} gives exactly $\delta$.  This proves the compact statement.

For the lift to $\R\times\Sph^m$, the warping function satisfies
\[
0<a-1\le a-\cos t\le a+1.
\]
The lifted metric is therefore uniformly equivalent to the complete product metric $\dd t^2+g_{\Sph^m}$, and is complete.  The curvature calculation is periodic and unchanged.
\end{proof}

\begin{remark}[Lower bound versus exact equality]\label{rem:not-exact}
Formula \eqref{eq:compact-pinching} varies with $t$.  Its minimum is the prescribed $\delta$, but its continuous limiting value at either Ricci-flat hypersurface is
\[
\frac1m=\delta_c.
\]
Thus Theorem~\ref{thm:global-intro} solves the compact and complete problem for the lower-bound condition, not for exact pointwise pinching.
\end{remark}

\subsection{A singly warped obstruction}

\begin{proposition}[An exact phase cannot emerge from a flat interface]\label{prop:single-obstruction}
Let $(F^m,g_F)$ have constant sectional curvature $\kappa$, let $m\ge2$, and consider
\[
g=\dd t^2+f(t)^2g_F,
\qquad f>0.
\]
Suppose a connected nonflat definite Ricci phase $I$ has exact pinching $\dric\equiv1/C$, with $C\ge m$, and that a point $t_0\in\overline I$ is Ricci-flat.  Then such a phase cannot meet $t_0$: the warping function is affine, and the metric is Ricci-flat, on a one-sided neighborhood of $t_0$ contained in $I$.
\end{proposition}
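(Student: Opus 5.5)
The plan is to reduce exact pinching to a single scalar proportionality between the two curvature quantities of the warped product, and then to show that this proportionality forces a first-order linear ODE for the radial curvature. Uniqueness for that ODE from the Ricci-flat point $t_0$ will then give the conclusion. Put
\[
x=-\frac{f''}{f},\qquad y=\frac{\kappa-(f')^2}{f^2},\qquad h=\frac{f'}{f}.
\]
The analogues of \eqref{eq:single-lambda0}--\eqref{eq:single-lambdaS} for a fiber of curvature $\kappa$ give $\lambda_0=mx$ and $\lambda_S=x+(m-1)y$, the latter with multiplicity $m$. Two curvature identities drive the argument. The first is Ricci-flatness at $t_0$: there $x=y=0$, since $mx=0$ forces $x=0$ and then $(m-1)y=0$ with $m\ge2$. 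The second is the derivative identity
\[
y'=2h(x-y),
\]
which follows by differentiating $y$ and substituting $f''=-xf$.

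\emph{Step 1: a constant proportionality on $I$.} On the definite phase $I$, both $\lambda_0$ and $\lambda_S$ have one sign and are nonzero. Exact pinching $\dric\equiv1/C$ then means that at each point either $\lambda_S=C\lambda_0$ or $\lambda_0=C\lambda_S$. These translate to
\[
y=\mu_1x,\quad \mu_1=\frac{Cm-1}{m-1},\qquad\text{or}\qquad y=\mu_2x,\quad \mu_2=\frac{m-C}{C(m-1)}.
\]
Since $x\ne0$ on $I$ and $\mu_1\ne\mu_2$ (because $C\ge m\ge2$), the two closed sets where each relation holds are disjoint and cover $I$. Connectedness of $I$ therefore selects one constant $\mu$ with $y=\mu x$ on all of $I$. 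By continuity the relation also holds on $\overline I$, in particular at $t_0$, where both sides vanish.

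\emph{Step 2: a linear ODE for $x$.} First suppose $\mu\ne0$. Substituting $y=\mu x$ into $y'=2h(x-y)$ gives, on $I$,
\[
x'=\frac{2(1-\mu)}{\mu}\,h\,x.
\]
Fix a compact one-sided interval $J=[t_0,t_0+\eps]\subset\overline I$ (or its mirror image). On $J$ the function $f$ is smooth and positive, so $h$ is bounded there. Gronwall's inequality with $x(t_0)=0$ then forces $x\equiv0$ on $J$, and so $y=\mu x\equiv0$ as well.

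Now suppose $\mu=0$, which can happen only for $\mu_2$ with $C=m$. Then $y\equiv0$, so $(f')^2\equiv\kappa$ on $I$, and differentiating gives $f'f''\equiv0$. If $\kappa>0$, then $f'$ never vanishes, so $f''\equiv0$. If $\kappa=0$, then $f'\equiv0$ and again $f''=0$. In either case $x\equiv0$.

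\emph{Step 3: conclusion.} We now have $x=y=0$ on a one-sided neighborhood of $t_0$ inside $I$. Hence $f''=0$, so $f$ is affine, and $\Ric=0$ there. This contradicts $I$ being a nonflat definite phase, so such a phase cannot meet $t_0$, which is exactly the assertion of the proposition.

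The main obstacle is Step 1: exact pinching must collapse to a single global relation $y=\mu x$. This requires excluding the possibility that the ordering of $\abs{\lambda_0}$ and $\abs{\lambda_S}$ switches inside $I$, which is handled by the connectedness argument. The bookkeeping that extends the relation continuously up to the endpoint $t_0$ is routine. The ODE step is then a standard uniqueness argument, because $h$ stays bounded as long as $f>0$ up to $t_0$.
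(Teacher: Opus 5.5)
Your proof is correct, but it reaches the conclusion by a different route from the paper.

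The paper turns each ordering directly into a second-order ODE for $f$: $(C-m)ff''=C(m-1)\bigl(\kappa-(f')^2\bigr)$ in one ordering, and $ff''=-\frac{m-1}{Cm-1}\bigl(\kappa-(f')^2\bigr)$ in the other. It notes that the affine function with Cauchy data $f''(t_0)=0$, $(f'(t_0))^2=\kappa$ solves that ODE, and concludes by nonlinear Cauchy uniqueness. The degenerate case $C=m$ of the first ordering is treated separately, exactly as in your $\mu=0$ case.

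You instead use the identity $y'=2h(x-y)$. It converts either relation $y=\mu x$ with $\mu\ne0$ into one linear first-order equation $x'=\frac{2(1-\mu)}{\mu}hx$, and you close the argument with linear Gronwall. That identity is precisely the $\partial_t$-component of $\diver\Ein=0$ for this warped product: here $\Ein(\partial_t,\partial_t)=-\binom m2 y$ and $\Ein(e_\alpha,e_\alpha)=-(m-1)x-\binom{m-1}2y$.

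What your route buys:
\begin{itemize}
\item It ties the obstruction to the same Bianchi mechanism that drives the rest of the paper.
\item It handles both orderings with one equation.
\item It gives slightly more: since $x(t)=x(s)\exp\bigl(\int_s^t\frac{2(1-\mu)}{\mu}h\bigr)$, on any interval where the relation holds and $f>0$, the radial curvature either vanishes identically or nowhere.
\end{itemize}
The paper's route has the advantage of producing the affine profile of $f$ directly, without passing through $x$.

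Your connectedness argument for the fixed ordering, using $x\ne0$ on $I$ and $\mu_1\ne\mu_2$, is a more explicit justification than the paper's one-line remark. The only detail you omit is $\kappa<0$ in the case $\mu=0$. That case is vacuous, since $(f')^2\equiv\kappa<0$ is impossible.
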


\begin{proof}
The Ricci eigenvalues are
\begin{align*}
\lambda_0&=-m\frac{f''}{f},\\
\lambda_F&=-\frac{f''}{f}+(m-1)\frac{\kappa-(f')^2}{f^2},
\end{align*}
with multiplicities $1$ and $m$.  Since the phase is definite and $C>1$, one of the two identities
\[
\lambda_0=C\lambda_F
\qquad\text{or}\qquad
\lambda_F=C\lambda_0
\]
holds throughout $I$; the ordering cannot switch without the ratio passing through $1$.

In the first case,
\begin{equation}\label{eq:single-obstruction1}
(C-m)ff''=C(m-1)\bigl(\kappa-(f')^2\bigr).
\end{equation}
At the Ricci-flat point,
\begin{equation}\label{eq:flat-data}
f''(t_0)=0,
\qquad
(f'(t_0))^2=\kappa.
\end{equation}
If $C>m$, equation \eqref{eq:single-obstruction1} is a smooth second-order ODE near $t_0$.  The affine function
\[
f_{\mathrm{flat}}(t)=f(t_0)+f'(t_0)(t-t_0)
\]
solves the same ODE with the same Cauchy data, so uniqueness forces $f=f_{\mathrm{flat}}$ on $I$ near $t_0$.  If $C=m$, equation \eqref{eq:single-obstruction1} gives $\kappa-(f')^2=0$ throughout the phase; differentiating, or treating separately $\kappa=0$, again gives $f''=0$ and flatness.

In the second ordering,
\begin{equation}\label{eq:single-obstruction2}
ff''=-\frac{m-1}{Cm-1}\bigl(\kappa-(f')^2\bigr).
\end{equation}
This is a smooth ODE for $f>0$, and the same affine solution has the Cauchy data \eqref{eq:flat-data}.  Uniqueness again forces Ricci-flatness on $I$ near $t_0$.
\end{proof}

In total dimension $n=m+1$, the condition $C\ge m$ is exactly $\delta=1/C\le1/(n-1)$.  Proposition~\ref{prop:single-obstruction} is an obstruction to one ansatz, not an unrestricted global nonexistence theorem.

\subsection{A periodic doubly warped obstruction}

\begin{proposition}[Fixed labeled relation plus periodicity forces flatness]\label{prop:periodic-obstruction}
Let $k\ge1$ and $C>1$.  On $\Sph^1\times\T^{k+1}$ consider
\[
g=\dd t^2+e^{2u(t)}\dd x^2+e^{2v(t)}\sum_{\alpha=1}^k\dd y_\alpha^2,
\]
where $u$ and $v$ are periodic.  If the Ricci eigenvalues satisfy
\begin{equation}\label{eq:periodic-relation}
\rho_0=C\rho_1
\end{equation}
identically, then $u$ and $v$ are constant and $g$ is flat.
\end{proposition}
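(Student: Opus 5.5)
The plan is to exploit periodicity through weighted integration by parts over one period. First rewrite the hypothesis \eqref{eq:periodic-relation} in the form used in the proof of Theorem~\ref{thm:subcritical-local}. Using \eqref{eq:rho0}--\eqref{eq:rho1}, the identity $\rho_0=C\rho_1$ is equivalent to
\[
D_A-\alpha D_B+C\alpha AB=0,\qquad \alpha=\frac{k}{C-1}>0,
\]
which is \eqref{eq:DA-relation} without any assumption on $C$ beyond $C>1$. Here $A=u'$ and $B=v'$ are smooth and periodic, and so is every weight $W=e^{pu+qv}$ with $p,q\in\R$.

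The key identities come from integrating a total derivative over a period. Since $W'=W(pA+qB)$ and $\int(WA)'\dd t=\int(WB)'\dd t=0$, we get
\[
\int W D_A\dd t=\int W\bigl((1-p)A^2-qAB\bigr)\dd t,
\qquad
\int W D_B\dd t=\int W\bigl(-pAB+(1-q)B^2\bigr)\dd t .
\]
Multiplying the relation by $W$ and integrating then gives
\[
\int W\Bigl[(1-p)A^2+\bigl(\alpha p+C\alpha-q\bigr)AB+\alpha(q-1)B^2\Bigr]\dd t=0 .
\]

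The only real step is choosing $(p,q)$ so that this quadratic form in $(A,B)$ is positive definite. To remove the cross term, take $q=\alpha(p+C)$. Definiteness then requires $p<1$ and $q>1$, that is, $\alpha(p+C)>1$ for some $p<1$. As $p\uparrow1$ the quantity $\alpha(p+C)$ tends to
\[
\alpha(1+C)=\frac{k(C+1)}{C-1}>k\ge1 .
\]
Hence some $p$ slightly below $1$ works, and this is the point where $C>1$ and $k\ge1$ enter. I expect this choice to be the only delicate part. If the cross-term cancellation turned out to be too restrictive, I would instead keep a small cross term and verify the discriminant condition directly.

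With such $(p,q)$, the integrand is a positive weight $W$ times a positive definite form in $(A,B)$, and its integral vanishes. Therefore $A\equiv B\equiv0$, so $u$ and $v$ are constant. The metric $g$ is then a flat product metric on $\Sph^1\times\T^{k+1}$, since all curvatures in \eqref{eq:double-sec1}--\eqref{eq:double-sec2} vanish.
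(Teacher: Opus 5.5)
Your proof is correct and is essentially the paper's argument. The paper integrates $\bigl((A-\alpha B)e^{\nu v}\bigr)'=-e^{\nu v}\bigl((A-\alpha B)^2+\mu B^2\bigr)$ over one period, with $\nu=\alpha(C+2)$ and $\mu=\alpha\bigl((C+1)\alpha-1\bigr)>0$; this is exactly your weighted identity with $p=0$ and $q=\alpha(C+2)$, where the cross term is absorbed by completing the square (your fallback) rather than cancelled, and your cancelling choice $q=\alpha(p+C)$ works equally well (already $p=0$, $q=\alpha C$ suffices, since $kC>C-1$).
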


\begin{proof}
Put
\[
A=u',
\qquad B=v',
\qquad
\alpha=\frac{k}{C-1}>0.
\]
Relation \eqref{eq:periodic-relation} is equivalent to
\[
D_A=\alpha D_B-C\alpha AB.
\]
Set $W=A-\alpha B$.  Substitution gives
\begin{equation}\label{eq:W-periodic}
W'+W^2+\nu BW+\mu B^2=0,
\end{equation}
where
\[
\nu=\alpha(C+2),
\qquad
\mu=\alpha\bigl((C+1)\alpha-1\bigr).
\]
Moreover
\[
(C+1)\alpha-1
=\frac{(k-1)C+k+1}{C-1}>0,
\]
so $\mu>0$.  Since $B=v'$, equation \eqref{eq:W-periodic} becomes
\[
\bigl(We^{\nu v}\bigr)'
=-e^{\nu v}(W^2+\mu B^2).
\]
Integrating over one period and using periodicity gives
\[
0=-\int_{\Sph^1}e^{\nu v}(W^2+\mu B^2)\dd t.
\]
Hence $W\equiv B\equiv0$, and then $A=W+\alpha B\equiv0$.
\end{proof}

This excludes the most direct periodization of the exact local model, namely one in which the same labeled Ricci eigendirections realize the extremal ratio everywhere.  A successful compact exact construction would have to change the extremal eigenbundle, use more warping functions, introduce singular cohomogeneity-one endpoints or monodromy, or leave the cohomogeneity-one setting.

\section{Dimension two, quarter pinching, and the status of the global exact problem}\label{sec:summary}

\subsection{Surfaces}

On a surface,
\[
\Ric=Kg.
\]
Therefore $\dric=1$ at every non-Ricci-flat point.  Exact Ricci pinching by any $\delta<1$ is impossible.  The metric \eqref{eq:surface-warped} gives a closed sign-changing exact-critical example, since both Ricci eigenvalues equal the sign-changing Gaussian curvature.

\subsection{Quarter-pinching consequences}

\begin{center}
\renewcommand{\arraystretch}{1.25}
\begin{tabularx}{\textwidth}{>{\raggedright\arraybackslash}p{0.16\textwidth}
>{\raggedright\arraybackslash}p{0.19\textwidth}X}
\toprule
Curvature & Dimension & Strongest conclusion established here \\
\midrule
Sectional & $n=2$ & The optimal pinching is $1$ at every nonflat point.  Exact $1/4$ pinching is impossible; the lower-bound convention allows sign change.\\
Sectional & $n\ge3$ & Any locally uniform positive pinching lower bound, in particular $\dsec\ge1/4$, forbids coexistence of positive and negative phases.\\
Ricci & $n=2$ & The optimal pinching is $1$ at every non-Ricci-flat point; exact $1/4$ pinching is impossible.\\
Ricci & $n=3,4$ & Local exact-$1/4$ sign-changing examples exist.  Closed sign-changing examples exist with $\dric\ge1/4$ and optimal global lower constant $1/4$.\\
Ricci & $n=5$ & The value $1/4$ is critical; local exact-$1/4$ sign-changing examples exist.\\
Ricci & $n\ge6$ & The lower bound $\dric\ge1/4$ is supercritical and forces one Ricci sign globally.\\
\bottomrule
\end{tabularx}
\end{center}

For the compact lower-bound examples in dimensions $3$ and $4$, formula \eqref{eq:a-delta-intro} gives respectively
\[
\dd t^2+(3-\cos t)^2g_{\Sph^2}
\quad\text{on }\Sph^1\times\Sph^2,
\]
and
\[
\dd t^2+(9-\cos t)^2g_{\Sph^3}
\quad\text{on }\Sph^1\times\Sph^3.
\]

\subsection{What remains open}

The results sharply distinguish exact pointwise equality from a lower pinching bound.  In dimensions $n\ge3$ they do not decide whether there is a closed or complete sign-changing metric with
\[
\dric\equiv\delta
\]
for a prescribed $0<\delta\le1/(n-1)$.  The local exact constructions show that no local obstruction is possible in this range, while Propositions~\ref{prop:single-obstruction} and~\ref{prop:periodic-obstruction} show that two natural globalizations fail.

The sectional theory leaves different questions.  The metric in Proposition~\ref{prop:conformal-signchange} is incomplete and has rapidly degenerating pinching.  The existence of complete or closed sectional-sign-changing examples satisfying the strict pointwise trichotomy but no uniform pinching remains outside the present arguments.  The constructed metric degenerates superpolynomially on one side and quartically on the other; determining the sharp possible rate for metrics is also unresolved by this work.  Remark~\ref{rem:linear-tensor} shows that linear degeneration is attainable for abstract divergence-free tensors.  Finally, for a fixed sectional pinching constant $\delta_0<1/2$, Theorem~\ref{thm:flat-intro} does not determine whether the flat set can have positive measure or Hausdorff dimension $n-1$.

\section{Concluding perspective}

A single divergence-free object, the Einstein tensor, organizes both theories.  For sectional curvature, the complementary-sum identity \eqref{eq:Ein-section} makes the appropriately signed Einstein tensor positive definite on every strictly one-signed phase.  Any locally uniform positive lower bound for the sectional pinching controls its condition number, and the support-rigidity theorem forbids a change of sign.

For Ricci curvature, positivity of the signed Einstein tensor is itself a pinching phenomenon.  The value $1/(n-1)$ is exactly where
\[
\frac12\Scal\,g-\Ric
\]
passes from uniform positivity relative to its trace to mere semidefiniteness.  Above this value, support rigidity gives global sign rigidity; at the endpoint a null direction can occur and exact local sign-changing metrics exist; below it, multiply warped metrics realize arbitrary exact ratios.  For the weaker lower-bound problem, explicit closed and complete subcritical examples persist.

The resulting picture is sharp at the local level and nearly complete globally: sectional sign change is impossible under every locally uniform positive pinching bound in dimensions $n\ge3$; the Ricci threshold is $1/(n-1)$, including critical and subcritical exact local counterexamples and failure of nonuniform pointwise supercriticality; closed and complete subcritical Ricci examples exist for the lower-bound condition; and two natural compactification ansatzes for exact Ricci pinching are obstructed.  The unrestricted closed or complete exact Ricci problem at and below the threshold remains open.

\end{document}